\newcommand{\N}{{\mathds{N}}}
\newcommand{\Q}{{\mathds{Q}}}
\newcommand{\R}{{\mathds{R}}}
\newcommand{\C}{{\mathds{C}}}
\newcommand{\T}{{\mathds{T}}}
\newcommand{\D}{{\mathfrak{D}}}
\newcommand{\A}{{\mathfrak{A}}}
\newcommand{\B}{{\mathfrak{B}}}
\newcommand{\bigslant}[2]{{\raisebox{.2em}{$#1$}\left/\raisebox{-.2em}{$#2$}\right.}}
\newcommand{\Lip}{{\mathsf{L}}}
\newcommand{\Hilbert}{{\mathscr{H}}}
\newcommand{\dpropinquity}[1]{{\mathsf{\Lambda}^\ast_{#1}}}
\newcommand{\modpropinquity}[1]{{\mathsf{\Lambda}^{\mathsf{mod}}_{#1}}}
\newcommand{\dmodpropinquity}[1]{{\mathsf{\Lambda}^{\ast\mathsf{mod}}_{#1}}}
\newcommand{\dmetpropinquity}[1]{{\mathsf{\Lambda}^{\ast\mathsf{met}}_{#1}}}
\newcommand{\Kantorovich}[1]{{\mathsf{mk}_{#1}}}
\newcommand{\KantorovichMod}[1]{{\mathsf{k}_{#1}}}
\newcommand{\Haus}[1]{{\mathsf{Haus}_{#1}}}
\newcommand{\StateSpace}{{\mathscr{S}}}
\newcommand{\MongeKant}{{Mon\-ge-Kan\-to\-ro\-vich metric}}
\newcommand{\Qqcms}[1]{{$#1$}--\gQqcms}
\newcommand{\gQqcms}{quasi-Leibniz quantum compact metric space}
\newcommand{\gQVB}{metrized quantum vector bundle}
\newcommand{\QVB}[2]{$\left({#1},{#2}\right)$--\gQVB}
\newcommand{\qvba}[1]{{\mathrm{qvb}\left({#1}\right)}}
\newcommand{\gMVB}{metrical quantum vector bundle}
\newcommand{\MVB}[3]{$\left({#1},{#2},{#3}\right)$--\gMVB}
\newcommand{\qcms}{quantum compact metric space}
\newcommand{\unit}{1}
\newcommand{\sa}[1]{{\mathfrak{sa}\left({#1}\right)}}
\newcommand{\inner}[3]{{\left<{#1},{#2}\right>_{#3}}}
\newcommand{\bridge}[1]{#1} 
\newcommand{\dom}[1]{{\operatorname*{dom}\left({#1}\right)}}
\newcommand{\codom}[1]{{\operatorname*{codom}\left({#1}\right)}}
\newcommand{\diam}[2]{{\mathrm{diam}\left({#1},{#2}\right)}}
\newcommand{\norm}[2]{\left\|{#1}\right\|_{#2}}
\newcommand{\tunnelset}[3]{{\text{\calligra Tunnels}\,\left[{#1}\stackrel{#3}{\longrightarrow}{#2} \right]}}
\newcommand{\bridgereach}[2]{{\varrho\left(\bridge{#1}\middle\vert {#2}\right)}}
\newcommand{\bridgemodularreach}[1]{{\varrho^\sharp\left({#1}\right)}}
\newcommand{\bridgeheight}[2]{{\varsigma\left(\bridge{#1}\middle\vert{#2}\right)}}
\newcommand{\bridgeimprint}[1]{{\varpi\left(\bridge{#1}\right)}}
\newcommand{\bridgelength}[2]{{\lambda\left(\bridge{#1}\middle\vert{#2}\right)}}
\newcommand{\bridgemodularlength}[1]{{\lambda^\sharp\left(\bridge{#1}\right)}}
\newcommand{\bridgenorm}[2]{{\mathsf{bn}_{ \bridge{#1}  }\left({#2}\right)}}
\newcommand{\decknorm}[2]{{\mathsf{dn}_{ \bridge{#1}  }\left({#2}\right)}}
\newcommand{\Lie}[2]{{\left\{{#1},{#2}\right\}}} 
\newcommand{\targetsettunnel}[3]{{\mathfrak{t}_{#1}\left({#2}\middle\vert{#3}\right)}}
\newcommand{\CDN}{{\mathsf{D}}}
\newcommand{\CDNa}{{\mathsf{W}}}
\newcommand{\worknote}[1]{} 
\newcommand{\opnorm}[3]{{\left|\mkern-1.5mu\left|\mkern-1.5mu\left| {#1} \right|\mkern-1.5mu\right|\mkern-1.5mu\right|_{#3}^{#2}}}
\newcommand{\tunnellength}[1]{{\lambda\left({#1}\right)}}
\newcommand{\tunnelextent}[1]{{\chi\left({#1}\right)}}
\newcommand{\coh}[1]{{\mathrm{co}\left(#1\right)}}
\newcommand{\alg}[1]{{\mathfrak{#1}}}
\newcommand{\module}[1]{{\mathscr{#1}}}
\theoremstyle{plain}
\newtheorem{theorem}{Theorem}[section]
\newtheorem{corollary}[theorem]{Corollary}
\newtheorem{lemma}[theorem]{Lemma}
\newtheorem{proposition}[theorem]{Proposition}
\newtheorem{theorem-definition}[theorem]{Theorem-Definition}
\theoremstyle{definition}
\newtheorem{definition}[theorem]{Definition}
\newtheorem{example}[theorem]{Example}
\newtheorem{convention}[theorem]{Convention}
\theoremstyle{remark}
\newtheorem{remark}[theorem]{Remark}
\newtheorem{notation}[theorem]{Notation}
\renewcommand{\geq}{\geqslant}
\renewcommand{\leq}{\leqslant}
\numberwithin{equation}{section}
\begin{document}

\title{The Dual Modular Gromov-Hausdorff propinquity and completeness}
\author{Fr\'{e}d\'{e}ric Latr\'{e}moli\`{e}re}
\email{frederic@math.du.edu}
\urladdr{http://www.math.du.edu/\symbol{126}frederic}
\address{Department of Mathematics \\ University of Denver \\ Denver CO 80208}

\date{\today}
\subjclass[2000]{Primary:  46L89, 46L30, 58B34.}
\keywords{Noncommutative metric geometry, Gromov-Hausdorff convergence, Monge-Kantorovich distance, Quantum Metric Spaces, Lip-norms, proper monoids, Gromov-Hausdorff distance for {\gQVB s}, Hilbert modules.}
\thanks{This work is part of the project supported by the grant H2020-MSCA-RISE-2015-691246-QUANTUM DYNAMICS and grant \#3542/H2020/2016/2 of the Polish Ministry of Science and Higher Education.}

\begin{abstract}
  We introduce in this paper the dual modular propinquity, a complete metric, up to full modular quantum isometry, on the class of {\gQVB s}, i.e. of Hilbert modules endowed with a type of densely defined norm, called a D-norm, which generalize the operator norm given by a connection on a Riemannian manifold. The dual modular propinquity is weaker than the modular propinquity yet it is complete, which is the main purpose of its introduction. Moreover, we show that the modular propinquity can be extended to a larger class of objects which involve {\qcms s} acting on {\gQVB s}.
\end{abstract}
\maketitle



\section{Introduction}

The dual modular Gromov-Hausdorff propinquity is a noncommutative analogue of the Gromov-Hausdorff distance for the class of {\gMVB s}, which are modules over {\qcms s}, i.e. noncommutative analogues of compact metric spaces. The purpose for the introduction of this metric is at least twofold. First, our project in noncommutative geometry aims at providing an analytical framework for the study of theories in mathematical physics as objects in a larger space, endowed with at least a topology induced by metrics of the kind introduced here and along our work in \cite{Latremoliere13,Latremoliere13b,Latremoliere14,Latremoliere15,Latremoliere16c,Latremoliere18b}. Indeed, some of the literature \cite{Grosse97,Connes97,Seiberg99,Schwarz98,Madore91,Madore,Wallet12,dandrea13} in quantum field theory strongly suggest the importance of metrics in the construction of finite-dimensional approximations. Second, and more practically, the metric in this manuscript is a complete metric up to full modular quantum isometry, dominated by the modular propinquity of \cite{Latremoliere16c} which we do not believe to be complete, and the dual modular propinquity is the foundation for our work on a Gromov-Hausdorff distance on spectral triples and other noncommutative differential structures, as discussed in \cite{Latremoliere18g}.

The construction of a Gromov-Hausdorff-like metric in noncommutative geometry presents several delicate issues. The general pattern \cite{Rieffel00,kerr02,li03,Wu06b,kerr09,Latremoliere13,Latremoliere13c} consists in finding an appropriate notion of isometric embeddings for certain structures endowed with some metric, and then take the infimum over all such embeddings of a number meant to measure how apart the structures are. Finding the right notions to make the above scheme function is however not obvious in noncommutative geometry, where quantum spaces are described only via noncommutative algebras seen as analogues of algebras of functions with appropriate geometric or analytical properties. In particular, even if an abstract notion of isometric embedding seems suitable, it also is essential that some prescription on how to build such embeddings be a foundational component of our work. A pattern which has served us well is to first introduce rather specific means to tell how far two structures may be --- we often call these means ``bridges'' --- and use them to define a metric for which most of our examples are established. Bridges are not per say given by isometric embedding, one the surface at least. Then, once this first set of tools is proven to at least be usable, we proceed to define a  more abstract metric which enjoys more pleasant properties, such as completeness, where explicit isometric embeddings are defined, usually under the terminology of ``tunnels''. The new, tunnel-based metric is more theoretically pleasing while always dominated by the bridge-based metric --- because we prove that bridges do in fact provide explicit tunnels, and in fact it is how we usually prove convergence for either metrics. Both are essential to our work so far.

The resulting family of metrics, which we named the Gromov-Hausdorff propinquities, has been showed to enjoy various nice properties and applications. We know of several convergence and finite dimensional approximations results, such as the continuity of quantum tori and approximations of quantum tori by fuzzy tori \cite{Latremoliere05,Latremoliere13c} (later on approached using our propinquity and different techniques in \cite{Junge16}, with potential connection to quantum information theory), continuity for families of AF algebras \cite{Latremoliere15}, full matrix approximations of coadjoint orbits for semi-simple Lie groups \cite{Rieffel15},  continuity for conformal and other types of deformations \cite{Latremoliere16}. We have a noncommutative analogue of Gromov's compactness theorem \cite{Latremoliere15c} and conditions for preservation of symmetries \cite{Latremoliere17c}. Our constructions have proven to be flexible enough to incorporate additional structures besides metric. Notably, we extended our propinquity to a covariant version \cite{Latremoliere18b,Latremoliere18c} defined on classes of quantum dynamical systems (i.e. proper metric semigroups and groups actions on {\qcms s}), and, central to this work, to modules \cite{Latremoliere16c,Latremoliere17a,Latremoliere18a, Rieffel17a}.

We followed our pattern when working with {\qcms s}, and then again with modules over {\qcms s}. Defining convergence for modules over quantum metric spaces, or even classical metric spaces, is not a well explored issue, and thus progress in this direction is of evident interest in sight of the important role that modules play in noncommutative geometry --- and vector bundles play in classical geometry. Lacking a commutative model led us to first use our bridges between {\qcms s} from \cite{Latremoliere13} to define a sort of modular bridges between new structures called {\gQVB s}, which are Hilbert modules over {\qcms s} endowed with a particular norm \cite{Latremoliere16c}. We then were able to prove that such modular bridges allow to define a metric, the modular propinquity, for which we then were able to prove the continuity of the family of Heisenberg modules over quantum tori, which is a rather involved process. The modular propinquity, just as the quantum propinquity, is not known to be complete, and has a technical description involving finite paths made of bridges between modules, but it provides an explicit, core toolkit to prove convergence of modules.

Yet, as a likely not complete metric based on paths of arbitrary lengths, the modular propinquity is less suited as a general analytical tool. Instead, as we have done before, we introduce a weaker metric on {\gQVB s}, which is complete, and still a metric up to the same appropriate notion of isomorphism as the modular propinquity. Being complete, this new metric is a good analytical tool, where for instance, the study of compactness of classes of {\gQVB s} becomes much more amenable, and we may hope to use of our metric to eventually prove the existence of certain spaces with desirable properties as limits of, say, finite dimensional physical models. Thus it is an important step for our program. This constitutes the initial theoretical value of the dual modular propinquity.

Several important objectives for our program also rely on the construction in this paper. First, the structure of a metrical quantum vector bundle is needed to define the notion of convergence for metric spectral triples. A spectral triple $(\A,\Hilbert,D)$ is given by a unital C*-algebra $\A$ acting on a Hilbert space $\Hilbert$ and an unbounded self-adjoint operator $D$ subject to some conditions, chief among them that $[D,a]$ is an operator with bounded closure for all $a$ in a dense *-subalgebra of $\A$. A spectral triple is metric when the seminorm $\Lip : a\in\A \mapsto \opnorm{[D,a]}{}{\Hilbert}$ (allowing for $\infty$) is an L-seminorm. Now, a metric spectral triple defines a metrical vector bundle, with $\Hilbert$ seen as a module over $\A$ --- but \emph{not} a Hilbert modules over $\A$, but rather a Hilbert module over $\C$. This flexibility is crucial to obtain a nontrivial metric between spectral triples. Moreover, the construction in this work does not involve the notion of ``modular treks'', which actually creates some difficulties when working with group actions, which we also need for our work with spectral triples. Thus the more direct approach taken here is very helpful moving forward. 

We hope to be able to use completeness of some classes of metric spectral triples to construct new spectral triples from Cauchy sequences of spectral triples. To this end, as the spectral propinquity is built on top of the metric on {\gMVB s} in this paper, we certainly hope that the metrics in this paper are complete. This issue is important, for instance, to try and obtain spectral triples on objects for which natural approximations exist, on which constructing a spectral triple is relatively easier than on the limit. A first step in this direction can be found in our work on the convergence of sums of spectral triples on curve to the spectral triples on certain fractals such as the Sierpinsky triangle and the Kigami triangle \cite{Latremoliere20a}. Completeness may help extend these ideas to more complicated fractals, by starting with spectral triples on simpler approximating spaces.

In a different direction, another source of {\gQVB s} is given by Dirichlet forms and their noncommutative analogues. A Dirichlet form is a particular quadratic form densely defined on some Hilbert space. Many examples of Dirichlet forms can be used to defined {\gQVB s}. Completeness of the modular propinquity can thus be seen as a tool to construct new Dirichlet forms by a Cauchy sequence argument. An example of potential application of this idea is to fractals, where Dirichlet forms are the starting point for the construction of differential calculi. More generally, noncommutative analogues of Dirichlet forms can be used to define differential calculi and provide another road to noncommutative geometry \cite{Sauvageot91}. Once more, this opens a new way to construct Dirichlet forms on fractals, for instance, and to elucidate some of the known constructions. In summary, the completeness of our metric makes it possible to use our work to explore the construction of new differential structures in noncommutative geometry.

On a more immediate, practical level, we were motivated to define the modular propinquity to avoid the use of modular treks in \cite{Latremoliere16c}, because they would raise some technical difficulties in our definition of the Gromov-Hausdorff distance for spectral triples in \cite{Latremoliere18g}. We furthermore are not aware that a Gromov-Hausdorff distance on vector bundles of the kind presented here and in \cite{Latremoliere16c}, has been introduced even in the commutative setting, and thus we intend to pursue applications of the present work to the metric approach to Riemannian geometry in a future work. Last, we would note that the fact we are able to use patterns which have emerged through our research to build the dual modular propinquity which then enjoys the basic properties we wish for is a good sign. Together with analogue results concerning the covariant propinquity, this paper reflects that our general approach to noncommutative metric geometry seems to bear fruits.

We begin this paper with the necessary framework of noncommutative metric geometry, starting with the notion of a {\qcms}, and moving quickly to {\gQVB s}. In working on this paper, we noticed that we could actually relax one condition from our original definition of {\gQVB s} in \cite{Latremoliere16c}. This turns out to be a valuable observation, as elements of the proof in \cite{Latremoliere16c} which relied on this now extra, redundant condition when working with {\gQVB s} can be recycled to extend the dual-modular propinquity to a much larger class of objects which we call {\gMVB s}, which play the key role in our work on the convergence of spectral triples in \cite{Latremoliere18g}. Informally, {\gMVB s} consist of a {\qcms} acting on a left module which is also a {\gQVB} over a \emph{different} {\qcms}.

We then define the dual modular propinquity for {\gQVB s}. As we discussed previously, we introduce a notion of modular tunnels, and prove that modular bridges in the sense of \cite{Latremoliere16c} provide such modular tunnels. Interestingly, the {\gQVB s} constructed from modular bridges do not meet our old definition in \cite{Latremoliere16c} though they do meet our relaxed notion in this paper. We also prove that modular tunnels can be almost composed, in the spirit of \cite{Latremoliere14}, which allows us to show that the dual-modular propinquity is at least a pseudo-metric on the class of {\gQVB s}. We then proceed to prove that our new metric is indeed a metric up to full modular quantum isometry, as desired. The proof follows a scheme similar to \cite{Latremoliere16c}, to which we refer whenever appropriate. However, we take care to include enough information to show how we can circumvent the now missing condition from our updated definition of {\gQVB s}.

We then extend our work to {\gMVB s} --- since these structures extend {\gQVB s}. As will be the pattern in this paper, we prefer to provide detailed proofs for {\gQVB s} and then only indicate what needs to be added in order to work with {\gMVB s}, so that notations remain simple. In general, this extension process is actually the easier part of this work, which is actually a rather reassuring observation that our definition is sensible.

The last section of this paper discusses completeness for the dual-modular propinquity. We start from the conclusion of \cite{Latremoliere13b} and proceed to show that the dual-modular propinquity is a complete metric, and remains so when extended to {\gMVB s}. This is the property which largely justifies the introduction of the dual-modular propinquity in general.

\section{Metrized and Metrical Quantum Vector Bundles}

Noncommutative metric geometry is the study of noncommutative analogues of the algebras of Lipschitz functions over metric spaces. We call these analogues {\qcms s}. This term appeared in \cite{Connes89}, and our current definition is the result of an evolution \cite{Rieffel98a,Rieffel99,Rieffel10c,Latremoliere13,Latremoliere15}, led by the needs of our work to both include as many valuable examples as possible while being a solid foundation for the rest of our work. Quantum compact metric spaces are pairs of a unital C*-algebra $\A$ and a noncommutative Lipschitz seminorm, which means a seminorm which induces by duality a metric for the weak* topology of the state space $\StateSpace(\A)$, while satisfying some form of a Leibniz inequality, parametrized by so-called permissible functions. The fundamental examples are given by the pairs $(C(X),\Lip)$ of the C*-algebra $\C$-valued continuous over a compact metric space $(X,d)$, with the usual Lipschitz seminorm $\Lip$ (allowing for $\infty$) induced by $d$ on $C(X)$. While the Lipschitz seminorms satisfy the usual form of the Leibniz inequality, other examples, such as the {\qcms s} $(\A,\Lip)$ where $\A$ are unital AF C*-algebras with a faithful tracial state, constructed in \cite{Latremoliere15} and including such examples as $UHF$ algebras and Effros-Shen algebras, satisfy a weakened form of the Leibniz inequality. Our work is designed to accommodate such examples, by fixing in advance a particular form of a Leibniz inequality by means of what we call permissible functions. Some form of Leibniz inequality is absolutely necessary as it is the tool we use to prove that distance zero for the propinquity implies the existence of a full quantum isometry.

\begin{notation}
  We denote the norm of any vector space $E$ by $\norm{\cdot}{E}$ unless otherwise specified.

  If $\A$ is a unital C*-algebra, then we denote its state space as $\StateSpace(\A)$, its space $\{a\in\A: a^\ast = a\}$ of self-adjoint elements by $\sa{\A}$, and its unit by $\unit_\A$. 

  Moreover, if $a\in\A$, we denote its real part $\frac{1}{2}(a+a^\ast)$ as $\Re a$ and its imaginary part $\frac{1}{2i}(a-a^\ast)$ as $\Im a$. A quick computation shows that for all $a\in\A$:
  \begin{equation*}
    \max\left\{ \norm{\Re a}{\A},\norm{\Im a}{\A} \right\} \leq \norm{a}{\A} \leq \sqrt{2}\max\left\{ \norm{\Re a}{\A}, \norm{\Im a}{\A} \right\} \text{.}
  \end{equation*}
\end{notation}

\begin{definition}
  A function $F : [0,\infty)^4\rightarrow [0,\infty)$ is \emph{permissible} when it is weakly increasing from $[0,\infty)^4$ endowed with the product order, and such that:
  \begin{equation*}
    \forall x,y,l_x,l_y \geq 0 \quad F(x,y,l_x,l_y) \geq x l_y + y l_x \text{.}
  \end{equation*}
\end{definition}

\begin{definition}[{\cite{Rieffel98a,Latremoliere13,Latremoliere15}}]\label{qcms-def}
  A \emph{\Qqcms{F}} $(\A,\Lip)$, where $F$ is a permissible function, is a unital C*-algebra $\A$, a seminorm $\Lip$ defined on a dense Jordan-Lie subalgebra $\dom{\Lip}$ of $\sa{\A}$ and a function $F : [0,\infty)^4\rightarrow [0,\infty)$ such that:
  \begin{enumerate}
  \item $\left\{ a \in \dom{\Lip} : \Lip(a) = 0 \right\} = \R\unit_\A$,
  \item the {\MongeKant} $\Kantorovich{\Lip}$ defined between any two states $\varphi,\psi \in \StateSpace(\A)$ by:
    \begin{equation*}
      \Kantorovich{\Lip}(\varphi,\psi) = \sup\left\{ \left|\varphi(a) - \psi(a)\right| : \Lip(a) \leq 1 \right\}
    \end{equation*}
    metrizes the weak* topology on $\StateSpace(\A)$,
  \item $\Lip$ is lower semi-continuous with respect to $\norm{\cdot}{\A}$,
  \item for all $a,b \in \dom{\Lip}$, we have
    \begin{equation*}
      \max\left\{ \Lip\left(\Re(a b)\right), \Lip\left(\Im(a b)\right) \right\} \leq F(\norm{a}{\A},\norm{b}{\A},\Lip(a), \Lip(b))\text{.}
    \end{equation*}
  \end{enumerate}
  If $(\A,\Lip)$ is a {\Qqcms{F}}, then $\Lip$ is called a \emph{$F$-quasi-Leibniz L-seminorm}.
\end{definition}

There are many examples of {\qcms s}: besides all compact metric spaces, noncommutative examples include quantum tori \cite{Rieffel98a,Rieffel02}, curved quantum tori \cite{Latremoliere16}, hyperbolic group C*-algebras \cite{Ozawa05}, nilpotent group C*-algebras \cite{Rieffel15b}, AF algebras \cite{Latremoliere15}, noncommutative solenoids \cite{Latremoliere16b}, Podl{\`e}s spheres \cite{Kaad18}, and more. Some of these examples, such as AF algebras \cite{Latremoliere15c}, are quasi-Leibniz but not Leibniz. A locally compact theory has emerged \cite{Latremoliere05b,Latremoliere12b}, though this paper focuses on the compact theory.

In this paper, we are concerned with the study of appropriate classes of modules over {\qcms s}. Appropriate here will mean that they are endowed with some metric structure, in a manner inspired by the definition of {\qcms s}. As discussed in \cite{Latremoliere16c}, our model is given by hermitian $\C$-vector bundles over Riemannian manifolds, endowed with a choice of a metric connection. The module of continuous sections of a hermitian vector bundle over a compact manifold is naturally a Hilbert module over the C*-algebra of continuous functions over its base, where Hilbert modules are defined as follows.

\begin{definition}[{\cite{Rieffel74}}]
  A \emph{Hilbert module} $\left(\module{M},\inner{\cdot}{\cdot}{\module{M}}\right)$ over a C*-algebra $\A$ is a left $\A$-module, and a $\C$-bilinear map:
    \begin{equation*}
      \inner{\cdot}{\cdot}{\module{M}} : \module{M}\times\module{M} \rightarrow \A
    \end{equation*}
    such that:
    \begin{enumerate}
      \item $\forall \omega,\eta \in \module{M}, a \in \A \quad \inner{a\omega}{\eta}{\module{M}} = a\inner{\omega}{\eta}{\module{M}}$,
      \item $\forall \omega,\eta \in \module{M} \quad \inner{\omega}{\eta}{\module{M}} = \inner{\eta}{\omega}{\module{M}}^\ast$,
      \item $\forall \omega\in \A \quad \inner{\omega}{\omega}{\module{M}} \geq 0$,
      \item $\forall \omega\in \A \quad \inner{\omega}{\omega}{\module{M}} = 0 \iff \omega = 0$,
      \item $\module{M}$ is a Banach space when endowed with the norm $\omega\in\module{M} \mapsto \norm{\omega}{\module{M}} = \sqrt{\norm{\inner{\omega}{\omega}{\module{M}}}{\A}}$.
    \end{enumerate}
\end{definition}

\begin{convention}
  When $(\module{M},\inner{\cdot}{\cdot}{\module{M}})$ is a Hilbert module, its Hilbert norm $\sqrt{\norm{\inner{\cdot}{\cdot}{\module{M}}}{\A}}$ is always denoted by $\norm{\cdot}{\module{M}}$.
\end{convention}

We will use the following natural notion of a morphism of module, allowing for base algebra changes.

\begin{definition}
  Let $\A$ and $\B$ be two unital C*-algebra.  A \emph{module morphism} $(\theta,\Theta)$ from a left $\A$-module $\module{M}$ to a left $\B$-module $\module{N}$ is a unital *-morphism $\theta : \A \rightarrow \B$ and a linear map $\Theta: \module{M} \rightarrow \module{N}$ such that:
  \begin{equation*}
    \forall a \in \A, \omega \in \module{M} \quad \Theta(a \omega) = \theta(a) \Theta(\omega)\text{.}
  \end{equation*}

  A \emph{Hilbert module morphism} $(\theta,\Theta)$ from a Hilbert $\A$-module $(\module{M},\inner{\cdot}{\cdot}{\module{M}})$ to a Hilbert $\B$-module $(\module{N},\inner{\cdot}{\cdot}{\module{N}})$ is a modular morphism $(\theta,\Theta)$ from $\module{M}$ to $\module{N}$ such that:
    \begin{equation*}
      \forall \omega,\eta \in \module{M} \quad \theta\left(\inner{\omega}{\eta}{\module{M}}\right) = \inner{\Theta(\omega)}{\Theta(\eta)}{\module{N}}\text{.}
    \end{equation*}

    A \emph{Hilbert module isomorphism} $(\theta,\Theta)$ is a Hilbert module morphism where $\theta$ and $\Theta$ are both bijections.
\end{definition}

We note that by a standard argument, if $(\theta,\Theta)$ is a Hilbert module isomorphism, then $(\theta^{-1},\Theta^{-1})$ is also a Hilbert module isomorphism.

A {\gQVB} is a Hilbert module over a {\qcms} which is, in addition, equipped with a kind of norm which captures some of the metric information encoded, for instance, in a connection over a Hermitian bundle, as explained  in \cite[Example 3.10]{Latremoliere16c}.

\begin{definition}
  A pair $(F,H)$ of functions is \emph{permissible} when $F$ is permissible and $H : [0,\infty)^2 \rightarrow  [0,\infty)$ which is weakly increasing from $[0,\infty)^2$ endowed with the product order, and such that:
  \begin{equation*}
    \forall x,y \geq  0 \quad H(x,y) \geq 2 x y \text{.}
  \end{equation*}
\end{definition}

\begin{definition}\label{metrized-bundle-def}
  A \emph{\QVB{F}{H}} $(\module{M},\inner{\cdot}{\cdot}{\module{M}},\CDN,\A,\Lip_\A)$, where $(F,H)$ is a permissible pair of functions, is given by:
  \begin{enumerate}
    \item a {\Qqcms{F}} $(\A,\Lip_\A)$ called the \emph{base quantum space},
    \item a Hilbert $\A$-module $(\module{M},\inner{\cdot}{\cdot}{\module{M}})$,
    \item a norm $\CDN$ on a dense, $\C$-linear space $\dom{\CDN}$ of $\module{M}$ such that:
      \begin{enumerate}
      \item $\forall \omega \in \dom{\CDN} \quad \norm{\omega}{\module{M}} \leq \CDN(\omega)$,
      \item $\left\{ \omega \in \dom{\CDN} : \CDN(\omega) \leq 1 \right\}$ is compact for $\norm{\cdot}{\module{M}}$,
      \item $\forall \omega,\eta\in \module{M} \quad \max\left\{ \Lip_\A\left(\Re \inner{\omega}{\eta}{\module{M}}\right), \Lip_\A\left(\Im \inner{\omega}{\eta}{\module{M}}\right) \right\} \leq H (\CDN(\omega) , \CDN(\eta))$ --- this inequality is referred to as the \emph{inner quasi-Leibniz inequality}.
      \end{enumerate}
  \end{enumerate}
  If $(\module{M},\inner{\cdot}{\cdot}{\module{M}},\CDN,\A,\Lip_\A)$ is a {\gQVB}, then $\CDN$ is called an \emph{$H$-quasi-Leibniz $D$-norm}.
\end{definition}

Definition (\ref{metrized-bundle-def}) is a relaxed version of \cite[Definition 3.8]{Latremoliere16c}, where we removed the so-called modular quasi-Leibniz requirement on D-norms. There are two reasons to do so. First, we will see that we do not require it for our construction --- or for the constructions in \cite{Latremoliere16c} for that matter. Second, modular tunnels obtained from modular bridges will not satisfy this extra condition. Nonetheless, we will return to this matter when working with {\gMVB s} below. 

For our purpose, the proper notion of morphisms, and isomorphisms, for {\gQVB s} is given as follows:

\begin{definition}\label{modular-isometry-def}
  Let $\mathds{A} = (\module{M},\inner{\cdot}{\cdot}{\module{M}},\CDN_{\module{M}},\A,\Lip_\A)$ and $\mathds{B} = (\module{N},\inner{\cdot}{\cdot}{\module{N}},\CDN_{\module{N}},\B,\Lip_\B)$ be two {\gQVB s}. An \emph{modular quantum isometry} $(\theta,\Theta)$ from $\mathds{A}$ to $\mathds{B}$ is given by a Hilbert module morphism $(\theta,\Theta) : (\module{M},\inner{\cdot}{\cdot}{\module{M}}) \rightarrow (\module{N},\inner{\cdot}{\cdot}{\module{N}})$ such that:
  \begin{enumerate}
    \item $\theta : (\A,\Lip_\A) \twoheadrightarrow (\B,\Lip_\B)$ is a quantum isometry,
    \item $\Theta : \module{M} \twoheadrightarrow \module{N}$ is a surjective map,
    \item for all $\omega \in \dom{\module{N}}$, we have:
      \begin{equation*}
        \CDN_{\module{N}}(\omega) = \inf\left\{ \CDN_{\module{M}}(\eta) : \Theta(\eta) = \omega \right\} \text{.}
      \end{equation*}
    \end{enumerate}

    A \emph{full modular quantum isometry} $(\theta,\Theta)$ from $\mathds{A}$ to $\mathds{B}$ is a Hilbert module isomorphism from $(\module{M},\inner{\cdot}{\cdot}{\module{M}})$ to $(\module{N},\inner{\cdot}{\cdot}{\module{N}})$ such that $\theta$ is a full quantum isometry, and
    \begin{equation*}
      \forall \omega \in \module{M} \quad \CDN_{\module{N}}\circ\Theta(\omega) = \CDN_{\module{M}}(\omega)  \text{.}
    \end{equation*}
\end{definition}

\begin{remark}\label{reached-remark}
  Let $\mathds{A} = (\module{M},\inner{\cdot}{\cdot}{\A},\CDN_{\module{M}},\A,\Lip_\A)$ and $\mathds{B} = (\module{N},\inner{\cdot}{\cdot}{\B},\CDN_{\module{N}},\B,\Lip_\B)$ be two {\gQVB s} and $(\theta,\Theta)$ a modular quantum isometry from $\mathds{A}$ to $\mathds{B}$. Let $\omega\in\dom{\CDN_{\module{N}}}$. By Definition (\ref{modular-isometry-def}), for all $n\in\N$, there exists $\eta_n \in \dom{\CDN_{\module{M}}}$ such that $\Theta(\eta_n) = \omega$ and $\CDN_{\module{N}}(\omega) \leq \CDN_{\module{M}}(\eta_n) \leq \CDN_{\module{N}}(\omega) + \frac{1}{n+1}$. As $\left\{ \eta \in\module{M} : \CDN_{\module{M}}(\eta) \leq \CDN_{\module{N}}(\omega) + 1 \right\}$ is compact, we then conclude that there exists a subsequence of $(\eta_n)_{n\in\N}$ converging to some $\eta$; by lower semi-continuity of $\CDN_{\module{M}}$,  we have $\CDN_{\module{M}}(\eta) \leq \CDN_{\module{N}}(\omega)$, while by continuity of $\Theta$, we have $\Theta(\eta) = \omega$ --- the latter implying that $\CDN_{\module{N}}(\omega)\leq \CDN_{\module{N}}(\eta)$ by Definition (\ref{modular-isometry-def}). 

 Hence we have shown that for all $\omega\in\dom{\CDN_{\module{N}}}$, there exists $\eta\in\module{M}$ such that $\Theta(\eta)=\omega$ and $\CDN_{\module{M}}(\eta)=\CDN_{\module{N}}(\omega)$.

The same reasoning applies to prove that for all $b \in \dom{\Lip_\B}$, there exists $a\in\dom{\Lip_\A}$  with $\Lip_\B(b) = \Lip_\A(a)$ and $\theta(a) = b$.
\end{remark}

Motivated by the study of convergence for spectral triples and other noncommutative differential structures, we wish to be able to work with a somewhat more general structure than {\gQVB s} where a {\qcms} acts on a module which is not a Hilbert module for this action --- yet is a Hilbert module over another {\qcms} \cite{Latremoliere18g}. Indeed, a spectral triple $(\A,\Hilbert,D)$ is given by a unital C*-algebra represented on a Hilbert space $\Hilbert$, together with a self-adjoint unbounded operator $D$ on  $\Hilbert$ subject to properties generalizing some properties of Dirac operators on connected compact Riemannian spin manifolds. Now, $\Hilbert$ is a module for $\A$, but not a Hilbert module for $\A$ in general since the inner product is not $\A$-linear --- for instance, if $(M,g)$ is a connected compact spin manifold with spinor bundle $S$ and Dirac operator $D$ acting on the Hilbert space of square integrable sections $L^2(S)$ of $S$, then $L^2(S)$ is not a Hilbert module over $C(X)$. This could be remedied in this special case, by replacing the inner product on $L^2(S)$ with a $C(M)$-valued inner product, and indeed this is the path we take to define {\gQVB s}. But this is a departure from the structure of a spectral triple. For instance, for the usual spectral triple $(\A_\theta,L^2(\T^2),D)$ over the quantum two torus $\A_\theta$ for $\theta\in\R\setminus\Q$, then there is no obvious way to replace the inner product on $L^2(\T^2)$ into a Hilbert module over $\A_\theta$.

Of course, any Hilbert space is a Hilbert module over the C*-algebra $\C$, which is trivially a {\qcms} in a unique way (for the L-seminorm $0$). So we need to work with an extension of {\gQVB s} where the module we work with is, at once, a module over a C*-algebra, and a Hilbert module over $\C$ --- that is, a Hilbert space. Now, as we shall see later on in the proof of the triangle inequality for the metrics we introduce in this work (see Theorem (\ref{triangle-thm})), we then will need to work for Hilbert modules over not only $\C$, but also $\C\oplus\C$, $\C\oplus\C\oplus\C$, and so on. 

We are led to the following definition (where incidentally, the condition which we removed from the definition of {\gQVB} in \cite[Definition 3.8]{Latremoliere16c} now re-appears). 

\begin{definition}
  A triple $(F,G,H)$ of functions is \emph{permissible} if $(F,H)$ is a permissible pair of function, and $G : [0,\infty)^3 \rightarrow [0,\infty)$ is weakly increasing from $[0,\infty)^3$ endowed with the product order, and such that:
  \begin{equation*}
    \forall x,y,z \geq 0 \quad G(x,y,z) \geq (x + y) z \text{.}
  \end{equation*}
\end{definition}

\begin{definition}\label{metrical-bundle-def}
  Let $(F,G,H)$ be a permissible triple of functions. A \emph{\MVB{F}{G}{H}} $(\module{M},\inner{\cdot}{\cdot}{\module{M}},\CDN,\A,\Lip_\A,\B,\Lip_\B)$ is given by:
  \begin{enumerate}
  \item a {\QVB{F}{H}} $(\module{M},\inner{\cdot}{\cdot}{\module{M}},\CDN,\A,\Lip_\A)$,
  \item a {\Qqcms{F}} $(\B,\Lip_\B)$,
  \item a unital *-morphism from $\B$ to the C*-algebra of adjoinable operators on $\module{M}$ --- we will regard $\module{M}$ as a left $\B$-module with no explicit notation for this *-homomorphism,
  \item $\forall b \in \dom{\Lip_\B}, \omega \in \dom{\CDN} \quad \CDN(b\omega)\leq G(\norm{b}{\B},\Lip_\B(b), \norm{\omega}{\module{M}})$.
  \end{enumerate}
\end{definition}

\begin{remark}
  If $(\module{M},\inner{\cdot}{\cdot}{\module{M}},\CDN,\A,\Lip_\A,\B,\Lip_\B)$ is a {\gMVB} then note that for all $\omega\in\module{M}$ and $b \in \B$, we have $\norm{b\omega}{\module{M}} \leq \norm{b}{\B} \norm{\omega}{\module{M}}$ by Definition (\ref{metrical-bundle-def}).
\end{remark}

For clarity, we find it helpful to follow a pattern in this paper whereby we first establish definitions and results for {\gQVB s} and then extend them to {\gMVB s}.

\bigskip

Our focus is to turn the class of {\qcms s}, {\gQVB s} and even {\gMVB s} into metric spaces, so that we can study these objects using such techniques as topological approximations. We begin by recalling from \cite{Latremoliere13,Latremoliere13b,Latremoliere14,Latremoliere15} how to do so for {\qcms s}, leading to the introduction of the dual Gromov-Hausdorff propinquity.

The Gromov-Hausdorff distance between two compact metric spaces $(X,d_X)$ and $(Y,d_Y)$ is the infimum, over all compact metric space $(Z,d_Z)$ containing an isometric copy of $(X,d_X)$ and $(Y,d_Y)$, of the Hausdorff distance between the isometric copies of $X$ and $Y$ in $(Z,d_Z)$. As a first step of our construction, we introduce the notion of a tunnel as the dual notion of an isometric embedding of two compact metric spaces into a third one, extended to our noncommutative setting.

\begin{definition}\label{tunnel-def}
  Let $(\A_1,\Lip_1)$ and $(\A_2,\Lip_2)$ be two {\Qqcms{F}s}. An \emph{$F$-tunnel} $(\D,\Lip_\D,\pi_1,\pi_2)$ from $(\A_1,\Lip_1)$ to $(\A_2,\Lip_2)$ is given by:
  \begin{enumerate}
    \item $(\D,\Lip_\D)$ is a {\Qqcms{F}},
    \item for $j \in \{1,2\}$, the map $\pi_j : (\D,\Lip_\D) \twoheadrightarrow (\A_j,\Lip_j)$ is a quantum isometry.
  \end{enumerate}
  The \emph{domain} $\dom{\tau}$ of $\tau$ is $(\A,\Lip_\A)$ and the \emph{codomain} $\codom{\tau}$ of $\tau$ is $(\B,\Lip_\B)$. 
\end{definition}

We associate to tunnels a number meant to measure how far apart its domain and codomain are --- taking the Hausdorff distance between certain state spaces of {\qcms s}, seen as metric spaces when endowed with their {\MongeKant}.

\begin{definition}\label{extent-def}
  The \emph{extent} $\tunnelextent{\tau}$ of a tunnel $\tau = (\D,\Lip_\D,\pi_1,\pi_2)$ from $(\A_1,\Lip_1)$ to $(\A_2,\Lip_2)$ is:
  \begin{equation*}
    \tunnelextent{\tau} = \max_{j\in\{1,2\}} \Haus{\Kantorovich{\Lip_\D}}\left( \StateSpace(\D), \left\{ \varphi\circ\pi_j : \varphi \in \StateSpace(\A_j) \right\} \right) \text{.}
  \end{equation*}
\end{definition}

We have actually some choice regarding what collection of tunnels we use to define the dual propinquity, though some compatibility is needed. We will use the following notion introduced in \cite{Latremoliere14}. This flexibility allows one to obtain a form of our metric where all {\qcms s} involved satisfy, for instance, the strong Leibniz property \cite{Rieffel10c}, or other desirable properties.

\begin{theorem-definition}[{\cite{Latremoliere14},\cite{Latremoliere15}}]
  Let $F$ be a permissible function. The class of all $F$-tunnels is appropriate for the class of all {\Qqcms{F}s}, where a class $\mathcal{T}$ of $F$-tunnels is \emph{appropriate} with a nonempty class $\mathcal{C}$ of {\Qqcms{F}s} when:
  \begin{enumerate}
  \item if $\tau \in \mathcal{T}$ then $\dom{\tau} , \codom{\tau} \in \mathcal{C}$,
  \item if $\mathds{A},\mathds{B} \in \mathcal{Q}$ then there exists a modular tunnel $\tau$ in $\mathcal{T}$ from $\mathds{A}$ to $\mathds{B}$,
  \item if $\mathcal{A}$ and $\mathcal{B}$ are in $\mathcal{C}$ and if there exists a full quantum isometry $\Theta : \mathds{A} \rightarrow \mathds{B}$, then the tunnel $(\mathds{A},\mathrm{id},\Theta)$ is in $\mathcal{T}$ --- where $\mathrm{id}$ is the identity *-automorphism on $\mathds{A}$,
  \item if $\tau = (\D,\Lip,\pi,\rho) \in \mathcal{T}$ then $\tau^{-1} = (\D,\Lip, \rho, \pi) \in \mathcal{T}$,
  \item if $\tau_1,\tau_2  \in \mathcal{T}$ with $\codom{\tau_1} = \dom{\tau_2}$ and if $\varepsilon > 0$, then there exists $\tau \in \mathcal{T}$ from $\dom{\tau_1}$ to $\codom{\tau_2}$ such that $\tunnelextent{\tau} \leq \tunnelextent{\tau_1} + \tunnelextent{\tau_2} + \varepsilon$.
  \end{enumerate}
\end{theorem-definition}

\begin{notation}
  If $\mathcal{T}$ is a class of tunnels appropriate with a nonempty class $\mathcal{C}$ of {\Qqcms{F}s} for some permissible function $F$, then the set of all tunnels from $\mathds{A} \in \mathcal{C}$ to $\mathds{B} \in \mathcal{C}$ in $\mathcal{T}$ is:
  \begin{equation*}
    \tunnelset{\mathds{A}}{\mathds{B}}{\mathcal{T}} \text{.}
  \end{equation*}
\end{notation}

In this paper, we will define and work with various pseudo-metric, defined on classes of objects in certain categories, where distance zero between any two objects for these pseudo-metrics is equivalent to the existence of an isomorphism between these objects. We introduce a convention which is a small abuse of the term ``metric'' but will make our exposition clearer.

\begin{convention}
  Let $\equiv$ be an equivalence relation on a class $\mathcal{C}$. A pseudo-metric $\delta$ on a class $\mathcal{C}$ is called a \emph{metric up to $\equiv$} when $\delta(x,y) = 0$ if and only if $x\equiv y$ for all $x,y \in \mathcal{C}$. In practice, $\equiv$ will often be the equivalence relation defined by some choice of isomorphisms, typically some notion of full quantum isometry.
\end{convention}

\bigskip

The \emph{dual propinquity} is our main tool for our research.

\begin{theorem-definition}[{\cite{Latremoliere13b,Latremoliere14,Latremoliere15}}]\label{prop-def}
  Let $F$ be a continuous permissible function and $\mathcal{C}$ be a nonempty class of {\Qqcms{F}s}. Let $\mathcal{T}$ be an appropriate class of tunnels for $\mathcal{C}$. If, for any two $(\A_1,\Lip_1)$ and $(\A_2,\Lip_2)$ in $\mathcal{C}$, we define:
  \begin{equation*}
    \dpropinquity{\mathcal{T}}((\A_1,\Lip_1),(\A_2,\Lip_2)) = \inf\left\{ \tunnelextent{\tau} : \tau \in \tunnelset{(\A_1,\Lip_1)}{(\A_2,\Lip_2)}{\mathcal{T}} \right\} \text{,}
  \end{equation*}
  then $\dpropinquity{\mathcal{T}}$ is a complete metric on the class of {\Qqcms{F}s} up to full quantum isometry, called the \emph{dual Gromov-Hausdorff propinquity}.
\end{theorem-definition}

We record that if we drop the assumption that the permissible function in Theorem-Definition (\ref{prop-def}), the resulting propinquity is still a metric up to full quantum isometry --- it may not be complete, however.

If $F$ is some permissible function, and if $\mathcal{T}$ is the class of all possible tunnels on the class $\mathcal{F}$ of all {$F$-\qcms s}, the dual propinquity $\dpropinquity{\mathcal{T}}$ on $\mathcal{F}$ is simply denoted as $\dpropinquity{F}$.

\bigskip

The question naturally arises: how to construct tunnels? The technique which has proven helpful in our program consists in introducing the following notion of a bridge. The source of our idea of a bridge between two {\qcms s} $(\A,\Lip_\A)$ and $(\B,\Lip_\B)$ started in Rieffel's groundbreaking paper \cite{Rieffel00}, where a more general notion of bridge meant a seminorm designed in a manner to help with the construction of a Lip-norm on $\A\oplus\B$. Now, our work, unlike Rieffel's, focuses on C*-algebras endowed with L-seminorms (possessing some Leibniz inequality property), and as our understanding of the technical needs for progressing with our work increases in time, we are led to a rather special notion of bridge, as follows. The ``bridge seminorm'' in the definition below is a form of bridge in the original sense of Rieffel.

\begin{definition}[{\cite[Definition 3.6]{Latremoliere13}}]
  Let $\A$ and $\B$ be two unital C*-algebras. A \emph{bridge} $\gamma = (\D,x,\pi_1,\pi_2)$ from $\A$ to $\B$ is given by:
  \begin{enumerate}
  \item a unital C*-algebra $\D$,
  \item $x\in \D$ such that:
    \begin{equation*}
      \StateSpace_1(\D|x) = \left\{ \varphi \in \StateSpace(\D) : \forall a \in \D \quad \varphi(a x) = \varphi(x a) = \varphi(a) \right\}
    \end{equation*}
    is not empty,
  \item two unital *-monomorphisms $\pi_1 : \D \hookrightarrow \A$ and $\pi_2 : \D \hookrightarrow \B$.
  \end{enumerate}
\end{definition}

A bridge does give rise to a tunnel as follows:

\begin{theorem}[{\cite[Theorem 6.3]{Latremoliere13}, \cite[Lemma 5.4]{Latremoliere13b}}]\label{tunnel-from-bridge-0-thm}
  If $\A_1$, $\A_2$ are two unital C*-algebras, if $\gamma = (\D,x,\pi_1,\pi_2)$ is a bridge from $\A_1$ to $\A_2$, and if $\Lip_1$, $\Lip_2$ are L-seminorms on $\A_1$ and $\A_2$ respectively, and if we define:
  \begin{enumerate}
  \item the \emph{height} $\bridgeheight{\gamma}{\Lip_1,\Lip_2}$ of $\gamma$ as:
    \begin{equation*}
      \max_{j\in\{1,2\}} \Haus{\Kantorovich{\Lip_j}}\left( \StateSpace(\A_j), \left\{\varphi\circ\pi_j : \varphi \in \StateSpace_1(\D|x) \right\}  \right)
    \end{equation*}
    \item the \emph{reach} $\bridgereach{\gamma}{\Lip_1,\Lip_2}$ of $\gamma$ as:
      \begin{equation*}
        \max_{\{j,k\} = \{1,2\}} \sup_{\substack{a_j\in\dom{\Lip_j} \\ \Lip_j(a_j) \leq 1 }} \inf_{\substack{a_k\in\dom{\Lip_k} \\ \Lip_k(a_k) \leq 1 }} \bridgenorm{\gamma}{a_1,a_2} \text{,}
      \end{equation*}
      where $\bridgenorm{\gamma}{a,b} = \norm{\pi_1(a) x - x \pi_2(b)}{\D}$ for all $a\in\A_1$, $b\in \A_2$,
      \item the \emph{length} $\bridgelength{\gamma}{\Lip_1,\Lip_2}$ of $\gamma$ as $\max\{\bridgeheight{\gamma}{\Lip_1,\Lip_2},\bridgereach{\gamma}{\Lip_1,\Lip_2} \}$,
  \end{enumerate}
  then for all $\lambda > 0$ with $\lambda\geq\bridgelength{\gamma}{\Lip_1,\Lip_2}$, setting for all $a\in\sa{\A}$ and $b\in \sa{\B}$:
  \begin{equation*}
    \Lip(a,b) = \max\left\{ \Lip_1(a), \Lip_2(b), \frac{1}{\lambda} \bridgenorm{\gamma}{a,b} \right\}
  \end{equation*}
  the quadruple $(\A_1\oplus\A_2,\Lip,\rho_1,\rho_2)$ is a tunnel from $(\A_1,\Lip_1)$ to $(\A_2,\Lip_2)$ of extent at most $\lambda$, with $\rho_j : (a_1,a_2)\in\A_1\oplus\A_2 \mapsto a_j$ is the canonical surjection for $j\in\{1,2\}$.
\end{theorem}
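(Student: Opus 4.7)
The plan is to verify, in order, the three requirements for $\tau = (\A_1 \oplus \A_2, \Lip, \rho_1, \rho_2)$ to be an $F$-tunnel: that $\Lip$ is an $F$-quasi-Leibniz L-seminorm on $\A_1 \oplus \A_2$, that $\rho_1$ and $\rho_2$ are quantum isometries onto $(\A_1,\Lip_1)$ and $(\A_2,\Lip_2)$ respectively, and that $\tunnelextent{\tau} \leq \lambda$. I abbreviate $B(a,b) := \pi_1(a)x - x\pi_2(b)$, so $\bridgenorm{\gamma}{a,b} = \norm{B(a,b)}{\D}$.

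First, I would establish the L-seminorm axioms for $\Lip$. Lower semi-continuity follows from that of $\Lip_1$ and $\Lip_2$ together with norm-continuity of $B$, and density of $\dom{\Lip_1} \oplus \dom{\Lip_2}$ in $\sa{\A_1 \oplus \A_2}$ is immediate. For the kernel condition, $\Lip(a,b) = 0$ forces $a = s\unit_{\A_1}$ and $b = t\unit_{\A_2}$; then $|s-t|\norm{x}{\D} = \bridgenorm{\gamma}{a,b} = 0$, and since $\StateSpace_1(\D|x)\neq \emptyset$ forces $x \neq 0$ (any $\varphi$ therein gives $\varphi(x) = 1$), we conclude $s = t$. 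The $F$-quasi-Leibniz inequality rests on the identity
\begin{equation*}
B(aa', bb') = \pi_1(a)B(a',b') + B(a,b)\pi_2(b'),
\end{equation*}
which yields $\bridgenorm{\gamma}{aa',bb'} \leq \norm{a}{\A_1}\bridgenorm{\gamma}{a',b'} + \bridgenorm{\gamma}{a,b}\norm{b'}{\A_2}$; symmetrizing over the Jordan and Lie products and combining with the $F$-quasi-Leibniz inequalities for $\Lip_1, \Lip_2$, plus monotonicity of $F$ and the defining inequality $F(x,y,l_x,l_y) \geq xl_y + yl_x$, yields the required bound. That $\Kantorovich{\Lip}$ metrizes the weak-* topology on $\StateSpace(\A_1 \oplus \A_2)$ follows from Rieffel's compactness criterion: total boundedness of the unit $\Lip$-ball modulo scalars inside a norm ball reduces coordinate-wise to the same property for $\Lip_1$ and $\Lip_2$, with a uniform diameter bound coming from the extent estimate below.

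Second, for the quantum isometry property of $\rho_1$: the inequality $\Lip_1 \circ \rho_1 \leq \Lip$ is immediate. For the converse, given $a \in \dom{\Lip_1}$ with $\Lip_1(a) \leq 1$, the assumption $\lambda \geq \bridgereach{\gamma}{\Lip_1,\Lip_2}$ provides, for each $\varepsilon > 0$, an element $b_\varepsilon \in \dom{\Lip_2}$ with $\Lip_2(b_\varepsilon) \leq 1$ and $\bridgenorm{\gamma}{a,b_\varepsilon} \leq \lambda + \varepsilon$. After translating by a scalar to control $\norm{b_\varepsilon}{\A_2}$, compactness of norm-bounded $\Lip_2$-balls modulo scalars together with lower semi-continuity extract a limit point $b$ with $\Lip_2(b) \leq 1$ and $\bridgenorm{\gamma}{a,b} \leq \lambda$, hence $\Lip(a,b) \leq 1$. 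This yields $\Lip_1(a) = \inf\{\Lip(a',b') : \rho_1(a',b') = a\}$, and the mirror argument handles $\rho_2$.

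Third, for the extent bound: every state on $\A_1 \oplus \A_2$ decomposes as $\mu = t(\varphi \circ \rho_1) + (1-t)(\psi \circ \rho_2)$ for some $t \in [0,1]$, $\varphi \in \StateSpace(\A_1)$, $\psi \in \StateSpace(\A_2)$. The key observation is that for any $\omega \in \StateSpace_1(\D|x)$ and any $(a,b)$ with $\Lip(a,b) \leq 1$, the $x$-centralizing property of $\omega$ yields $\omega(\pi_1(a)) - \omega(\pi_2(b)) = \omega(B(a,b))$, whence $|\omega(\pi_1(a)) - \omega(\pi_2(b))| \leq \bridgenorm{\gamma}{a,b} \leq \lambda$. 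Combined with the height bound, which approximates any state on $\A_j$ by one of the form $\omega \circ \pi_j$ within $\lambda$ in $\Kantorovich{\Lip_j}$, this lets me replace each piece of the decomposition of $\mu$ by a state factoring through $\rho_1$ (respectively $\rho_2$) at $\Kantorovich{\Lip}$-distance at most $\lambda$; convexity of the Monge-Kantorovich distance then gives $\tunnelextent{\tau} \leq \lambda$. The main obstacle I anticipate is the bookkeeping in the quasi-Leibniz step, where $F$ must simultaneously absorb the $\Lip_j$-Leibniz terms and the bridge-norm Leibniz term across the real/imaginary decomposition of products; once the $x$-centralizing identity is in hand, the extent bound itself is conceptually clean.
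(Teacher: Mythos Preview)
The paper states this theorem as a citation of \cite[Theorem 6.3]{Latremoliere13} and \cite[Lemma 5.4]{Latremoliere13b} without giving a proof, so there is no in-paper argument to compare against. Your outline follows the standard route from those references: the $F$-quasi-Leibniz verification via the derivation identity $B(aa',bb') = \pi_1(a)B(a',b') + B(a,b)\pi_2(b')$, and the quantum-isometry step via the bridge reach, are exactly the right tools and are correctly described. One minor remark: you do not need to extract a limit $b$ by compactness for the quantum-isometry condition --- the quotient-seminorm requirement is an infimum, so $\Lip_1(a) \geq \inf_b \Lip(a,b)$ follows directly once you produce $b_\varepsilon$ with $\Lip(a,b_\varepsilon)\leq 1 + \varepsilon/\lambda$; attainment is a separate matter (cf.\ Remark (\ref{reached-remark})).

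There is, however, a genuine gap in your extent bound. You correctly obtain $|\omega(\pi_1(a)) - \omega(\pi_2(b))| \leq \bridgenorm{\gamma}{a,b}\leq\lambda$ for $\omega\in\StateSpace_1(\D|x)$ and $\Lip(a,b)\leq 1$, which gives $\Kantorovich{\Lip}\bigl((\omega\circ\pi_1)\circ\rho_1,\,(\omega\circ\pi_2)\circ\rho_2\bigr)\leq\lambda$. But to pass from an arbitrary $\psi\circ\rho_2$ to a state factoring through $\rho_1$, you first use the height to replace $\psi$ by $\omega\circ\pi_2$ at cost $\leq\bridgeheight{\gamma}{\Lip_1,\Lip_2}$, and \emph{then} cross to $\omega\circ\pi_1$ at cost $\leq\lambda$. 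These two contributions add, so your argument actually yields $\tunnelextent{\tau}\leq\bridgeheight{\gamma}{\Lip_1,\Lip_2}+\lambda\leq 2\lambda$, not $\leq\lambda$ as you claim. This is not a matter of bookkeeping: with $\A_1=\A_2=\D=\C^2$ carrying the standard L-seminorm, $\pi_j=\mathrm{id}$, and pivot $x=(1,0)$, one has height $=1$, reach $=0$, hence $\lambda=1$ is admissible, yet a direct computation gives $\Kantorovich{\Lip}\bigl(\delta_2\circ\rho_2,\varphi\circ\rho_1\bigr)\geq 2$ for every $\varphi\in\StateSpace(\A_1)$. For the applications in this paper the factor of two is harmless --- and indeed in the proof of Theorem (\ref{tunnel-from-bridge-thm}) the author invokes \cite{Latremoliere14} directly for the extent estimate rather than appealing to the constant stated here --- but your outlined argument does not deliver the bound $\lambda$ as written.
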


\bigskip

We started to study the problem of convergence of {\gQVB s} in \cite{Latremoliere15c}. As it was not necessarily obvious how to start such an endeavor, we begun our work by extending the notion of a bridge to modular bridges between {\gQVB s}, taking advantage of the Hilbert module structure to provide a sort of replacement for states, as seen below. In order to construct an analogue for modules of the bridge seminorm, we also found it helpful to introduce the notion of anchors and co-anchors. Intuitively, anchors and co-anchors are paired with each other, and thus provide a sort of almost correspondence between the two modules under consideration.

\begin{definition}\label{modular-bridge-def}
  Let $\mathds{A}_1 = (\module{M}_1,\inner{\cdot}{\cdot}{1},\CDN_1,\A_1,\Lip_1)$ and $\mathds{A}_2 = (\module{M}_2,\inner{\cdot}{\cdot}{2},\CDN_2,\A_2,\Lip_2)$ be two {\gQVB s}. 
  
  A \emph{modular bridge} $\gamma = (\D,x,\pi_1,\pi_2,(\omega_j)_{j \in J}, (\eta_j)_{j \in J})$ from $\mathds{A}_1$ to $\mathds{A}_2$ is given as a bridge $\gamma_\flat = (\D,x,\pi_1,\pi_2)$ from $\A_1$ to $\A_2$ with $\norm{x}{\D} = 1$, and for all $j\in J$:
  \begin{equation*}
    \omega_j \in \module{M}_1 \text{ with }\CDN_1(\omega_j) \leq 1 \text{ and } \eta_j \in \module{M}_2, \text{ with }\CDN_2(\eta_j) \leq 1 \text{.}
  \end{equation*}
  The family $(\omega_j)_{j\in J}$ is the family of \emph{anchors} of $\gamma$, while $(\eta_j)_{j\in J}$ is the family of \emph{co-anchors} of $\gamma$.
\end{definition}

As with bridges between unital C*-algebras \cite{Latremoliere13}, there is a mean to associate a number to a modular bridge and use this quantification to define a modular version of the propinquity \cite{Latremoliere16c}. To begin with, the data contained in a modular bridge between two {\gQVB s} includes a bridge between the base {\qcms s}, and the length of this bridge is a first number associated with a modular bridge.

To also numerically capture module-related data from a bridge, it is helpful to introduce a distance on the underlying module of a {\gQVB}, in a manner reminiscent of the {\MongeKant} on the state spaces of {\qcms s}.

\begin{definition}[{\cite[Definition 3.23]{Latremoliere16c}}]
  Let $(\module{M},\inner{\cdot}{\cdot}{\module{M}},\CDN,\A,\Lip)$ be a {\gQVB}. The \emph{modular \MongeKant} $\KantorovichMod{\CDN}$ is defined for all $\omega,\eta\in\module{M}$ by:
  \begin{equation*}
    \KantorovichMod{\CDN}(\omega,\eta) = \sup\left\{ \norm{\inner{\omega - \eta}{\zeta}{\module{M}}}{\A} : \CDN(\zeta) \leq 1 \right\} \text{.}
  \end{equation*}
\end{definition}

We record that, just as the {\MongeKant} induces the weak* topology, the modular {\MongeKant} also induces a natural topology, thanks to our assumptions on D-norms.

\begin{proposition}[{\cite[Proposition 3.24]{Latremoliere16c}}]\label{modular-mongekant-prop}
  If $(\module{M},\inner{\cdot}{\cdot}{\module{M}},\CDN,\A,\Lip)$ is a {\gQVB}, then $\KantorovichMod{\CDN}$ is a metric whose topology on $\module{M}$ is the $\A$-weak topology, i.e. the initial topology for $\{ \norm{\inner{\cdot}{\omega}{\module{M}}}{\A}: \omega \in \module{M} \}$. Moreover, when restricted to the unit ball of $\CDN$, both $\norm{\cdot}{\module{M}}$ and $\KantorovichMod{\CDN}$ induce the same topology.
\end{proposition}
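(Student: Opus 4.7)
The plan has three strands: verify the metric properties, identify the induced topology as the $\A$-weak topology, and then refine this on the unit ball using the compactness axiom of a D-norm.

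First, I would verify that $\KantorovichMod{\CDN}$ is a metric. Symmetry follows because $\inner{\eta-\omega}{\zeta}{\module{M}} = \inner{\omega-\eta}{\zeta}{\module{M}}^\ast$, which preserves the C*-norm. The triangle inequality is the standard argument for a supremum of triangle inequalities. Finiteness follows from Cauchy-Schwarz together with the bound $\norm{\zeta}{\module{M}}\leq\CDN(\zeta)$, which yields $\KantorovichMod{\CDN}(\omega,\eta)\leq\norm{\omega-\eta}{\module{M}}$. For definiteness, suppose $\KantorovichMod{\CDN}(\omega,\eta)=0$. Then $\inner{\omega-\eta}{\zeta}{\module{M}}=0$ whenever $\CDN(\zeta)\leq 1$, so by $\C$-linear scaling the identity extends to all of $\dom{\CDN}$, and by density of $\dom{\CDN}$ in $\module{M}$ together with continuity of $\inner{\omega-\eta}{\cdot}{\module{M}}$, to all of $\module{M}$; testing against $\zeta=\omega-\eta$ gives $\omega=\eta$.

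Second, I would identify the topology of $\KantorovichMod{\CDN}$ with the $\A$-weak topology. One inclusion is easy: for every $\zeta'\in\dom{\CDN}$ with $\CDN(\zeta')>0$, normalizing yields
\begin{equation*}
\norm{\inner{\omega-\eta}{\zeta'}{\module{M}}}{\A}\leq\CDN(\zeta')\,\KantorovichMod{\CDN}(\omega,\eta)\text{,}
\end{equation*}
so each generating seminorm of the $\A$-weak topology indexed by an element of $\dom{\CDN}$ is MK-continuous; by density of $\dom{\CDN}$ in $\module{M}$ and the bound $\norm{\inner{\cdot}{\zeta'-\zeta}{\module{M}}}{\A}\leq\norm{\cdot}{\module{M}}\,\norm{\zeta'-\zeta}{\module{M}}$, this extends to all $\zeta'\in\module{M}$. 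For the reverse inclusion, I would exploit that the unit ball $U=\{\zeta:\CDN(\zeta)\leq 1\}$ is $\norm{\cdot}{\module{M}}$-compact by axiom of a D-norm: on $U$ the maps $\zeta\mapsto\norm{\inner{\xi}{\zeta}{\module{M}}}{\A}$ are equicontinuous in $\zeta$, so the supremum defining $\KantorovichMod{\CDN}$ can be reduced via a finite cover of $U$ to control by finitely many seminorms $p_{\zeta_i}(\xi)=\norm{\inner{\xi}{\zeta_i}{\module{M}}}{\A}$ — i.e. to a basic $\A$-weak neighborhood.

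Third, for the ``moreover'' statement on the unit ball of $\CDN$, I would use a clean abstract argument. The identity map from this unit ball equipped with $\norm{\cdot}{\module{M}}$ to the same set equipped with $\KantorovichMod{\CDN}$ is continuous since $\KantorovichMod{\CDN}\leq\norm{\cdot}{\module{M}}$; the source is compact by axiom of a D-norm, and the target is Hausdorff since $\KantorovichMod{\CDN}$ has already been shown to be a metric; hence this continuous bijection is a homeomorphism, and the two topologies on $U$ agree.

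The main obstacle I expect is the second inclusion in step two: MK-convergence does not automatically furnish a $\norm{\cdot}{\module{M}}$-bound on the tail, so one cannot directly promote $\A$-weak convergence on a dense subset to MK convergence. The key is to show that any $\A$-weak neighborhood of $0$ already absorbs an MK ball, via the finite subcover of $U$ under the equicontinuous family of seminorms $\zeta\mapsto\norm{\inner{\xi}{\zeta}{\module{M}}}{\A}$ — the compactness of $U$ being precisely the axiom of Definition \ref{metrized-bundle-def} that was built in for such arguments.
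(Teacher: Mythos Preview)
The paper does not prove this proposition; it simply records it with a citation to \cite[Proposition~3.24]{Latremoliere16c}. So there is no in-paper argument to compare against, and I evaluate your proposal on its own.

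Your steps 1 and 3 are correct. The compact-to-Hausdorff bijection in step 3 is exactly the right way to handle the ``moreover'' clause, and it is the only part of the proposition actually invoked later in the paper (in the proof of Theorem~\ref{zero-thm}).

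Step 2, however, has a genuine gap --- and not where you flagged it. Your forward inclusion (every $p_{\zeta'}$ with $\zeta'\in\module{M}$ is $\KantorovichMod{\CDN}$-continuous) breaks at the extension from $\zeta'\in\dom{\CDN}$ to general $\zeta'$: the estimate $\norm{\inner{\xi}{\zeta'-\zeta}{\module{M}}}{\A}\leq\norm{\xi}{\module{M}}\norm{\zeta'-\zeta}{\module{M}}$ involves $\norm{\xi}{\module{M}}$, which is not $\KantorovichMod{\CDN}$-continuous, so you cannot pass to the limit. This is not repairable: for $\A=\C$ and $\module{M}$ an infinite-dimensional Hilbert space, the $\A$-weak topology is the ordinary weak topology, which is not first countable and hence cannot agree with any metric topology on all of $\module{M}$. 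Concretely, with $\module{M}=\ell^2(\N)$ and $\CDN(\xi)^2=\sum_n (n{+}1)^2|\xi_n|^2$, the sequence $\sqrt{m}\,e_m$ satisfies $\KantorovichMod{\CDN}(\sqrt{m}\,e_m,0)=\sqrt{m}/(m{+}1)\to 0$, yet fails to converge weakly (test against $\eta$ with $\eta_{k^2}=1/k$ and $\eta_n=0$ otherwise).

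What your argument does establish is that $\KantorovichMod{\CDN}$ induces the initial topology for $\{p_\zeta:\zeta\in\dom{\CDN}\}$, and that on any $\CDN$-ball this coincides with both the full $\A$-weak topology and the norm topology --- since there $\norm{\xi}{\module{M}}$ is bounded, so both your density extension and your equicontinuity-plus-finite-cover argument go through. That restricted statement is what the paper actually uses.
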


Using the modular {\MongeKant}, we can then define two additional quantities naturally associated with modular bridges. The first quantity, which we call the imprint, measures the density of the sets of anchors and co-anchors in the closed unit balls for the appropriate D-norms. It thus quantifies the error we make by replacing the closed unit balls of D-norms simply by the sets of anchors and co-anchors. Then, as we noted, anchors and co-anchors are paired with each other, so we measure how far each pair of anchor and co-anchor is in the sense of the bridge seminorm for the underlying bridge between the base {\qcms s}, using the inner products to plunge elements of the module back in their respective base spaces. The length of a modular bridge is then a combination of all these quantities in a manner appropriate for the proof of the triangle inequality and desired coincidence property.

\begin{definition}\label{imprint-def}
  Let $\gamma = (\gamma_\flat, (\omega_j)_{j \in J}, (\eta_j)_{j \in J})$ be a modular bridge from $\mathds{A} = (\module{M},\inner{\cdot}{\cdot}{\A},\CDN_{\module{M}},\A,\Lip_\A)$ to $\mathds{B} = (\module{N},\inner{\cdot}{\cdot}{\B},\CDN_{\module{N}},\B,\Lip_\B)$, where $\gamma_\flat = (\D, x, \pi_\A, \pi_\B)$ a bridge from $\A$ to $\B$. We define:
  \begin{enumerate}
    \item the \emph{imprint} $\bridgeimprint{\gamma}$ of $\gamma$ is:
      \begin{multline*}
        \max\big\{ \Haus{\KantorovichMod{\CDN_{\module{M}}}}(\{\omega_j:j\in J\}, \{\omega\in\module{M}:\CDN_{\module{M}}(\omega)\leq 1 \}) \\
        \Haus{\KantorovichMod{\CDN_{\module{N}}}}( \{\eta_j:j\in J\}, \{\eta\in\module{N}:\CDN_{\module{N}}(\eta)\leq 1 \}  ) \big\}\text{;}
      \end{multline*}
    \item the \emph{modular reach} $\bridgemodularreach{\gamma}$ of $\gamma$ is:
      \begin{equation*}
        \max_{j \in J} \decknorm{\gamma}{\omega_j,\eta_j}
      \end{equation*}
      where:
      \begin{equation*}
        \decknorm{\gamma}{\omega,\eta} = \sup\left\{ \bridgenorm{\gamma_\flat}{\inner{\omega}{\omega_j}{\A}, \inner{\eta}{\eta_j}{\B}}, \bridgenorm{\gamma_\flat}{\inner{\omega_j}{\omega}{\A}, \inner{\eta_j}{\eta}{\B} } : j \in J \right\} \text{.}
      \end{equation*}
      \item the \emph{length} $\bridgemodularlength{\gamma}$ of $\gamma$ is:
        \begin{equation*}
          \max\left\{ \bridgelength{\gamma_\flat}{\Lip_\A,\Lip_\B}, \bridgeimprint{\gamma} + \bridgemodularreach{\gamma}  \right\}
        \end{equation*}
  \end{enumerate}
\end{definition}

The \emph{modular propinquity} \cite{Latremoliere16c} is the largest pseudo-metric on {\gQVB s} such that if $\mathds{A}$ and $\mathds{B}$ are {\QVB{F}{H}s}, and if $\gamma$ is a modular bridge form $\mathds{A}$ to $\mathds{B}$, then $\dmodpropinquity{F,H}(\mathds{A},\mathds{B}) \leq \bridgemodularlength{\gamma}$. While the actual definition is involved, its motivation is of course given by the original construction of Edwards and Gromov.

We proved in \cite{Latremoliere16c} that the modular propinquity is a metric up to full modular quantum isometry on a large class of {\gQVB s}, and we showed in particular in \cite{Latremoliere17c,Latremoliere18a} that Heisenberg modules over quantum $2$-tori form continuous families for this  metric. However, just as with the quantum propinquity for {\qcms s}, of which the modular propinquity is the modular analogue, we do not know whether the modular propinquity is complete, thus complicating the study of its geometric and topological properties. Moreover, its definition involves paths of modular bridges, which is sometimes difficult to work with. 

We now present how to define a dual-modular propinquity, which, we will prove, is a complete metric, still up to full modular quantum isometry, and whose construction can be extended to differential structures, such as spectral triples \cite{Latremoliere18g}. The dual modular propinquity dominates the modular propinquity while having a more flexible construction.

\section{The dual modular Propinquity for {\gQVB s}}

The basic ingredient for the construction of the dual-modular propinquity is the notion of a modular tunnel --- the construct analogue to an isometric embedding in the construction of the Gromov-Hausdorff distance.

\begin{definition}\label{modular-tunnel-def}
  Let $(F,H)$ be a permissible pair of functions. Let $\mathds{A}$ and $\mathds{B}$ be two {\QVB{F}{H}s}.  A \emph{$(F,H)$-modular tunnel} $\tau = \left(\mathds{D}, \pi_\A, \pi_\B \right)$ from $\mathds{A}$ to $\mathds{B}$ is given by:
  \begin{enumerate}
  \item a {\QVB{F}{H}} $\mathds{D}$,
  \item two isometries $\pi_\A : \mathds{D} \twoheadrightarrow \mathds{A}$ and $\pi_\B : \mathds{D} \twoheadrightarrow \mathds{B}$.
    \end{enumerate}
  \end{definition}

Modular tunnels allow us to quantify how far apart two {\gQVB s} may be, using a number called the extent, and it is natural to define our prospective metric as the infimum of the extents of all possible tunnels between two given {\gQVB s}. In fact, it is very notable, and comforting, that we simply use the same notion as for a regular tunnel.

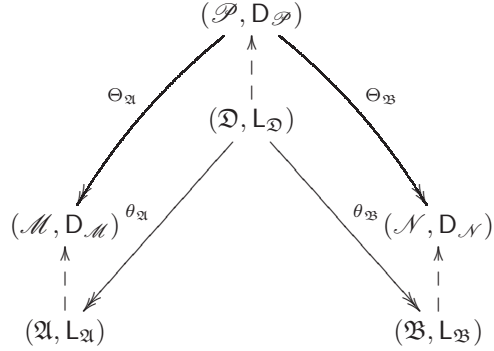
\begin{figure}[t]
\begin{equation*}
  \xymatrix{
    & (\module{P},\CDN_{\module{P}}) \ar@/_/@{>>}[ldd]_{\Theta_\A} \ar@/^/@{>>}[rdd]^{\Theta_\B} & \\
    & (\D,\Lip_\D) \ar@{>>}[ldd]_{\theta_\A} \ar@{>>}[rdd]^{\theta_\B} \ar@{-->}[u] & \\
    (\module{M},\CDN_{\module{M}}) & & (\module{N},\CDN_{\module{N}}) \\
    (\A,\Lip_\A) \ar@{-->}[u] & & (\B,\Lip_\B) \ar@{-->}[u]
    }
\end{equation*}
\caption{A modular tunnel}
\end{figure}

\begin{notation}
  Let $(F,H)$ be a pair of permissible functions, and let $\mathds{A}$, $\mathds{B}$ and $\mathds{D}$ be three {\QVB{F}{H}s}, with respective base quantum spaces $(\A,\Lip_\A)$, $(\B,\Lip_\B)$ and $(\D,\Lip_\B)$. If $\tau = (\mathds{D},(\theta_\A,\Theta_\A), (\theta_\B,\Theta_\B))$ is a modular tunnel with $\mathds{D} = (\module{P},\inner{\cdot}{\cdot}{\D},\CDN_\D,\D,\Lip_\D)$, then $\tau_\flat = (\D,\Lip_\D,\theta_\A,\theta_\B)$ is an $F$-tunnel from $(\A,\Lip_\A)$ to $(\B,\Lip_\B)$ (see Definition (\ref{tunnel-def})).
\end{notation}

\begin{definition}\label{tunnel-extent-def}
  The \emph{extent} of a modular tunnel $\tau$ is the extent of its basic tunnel $\tau_\flat$ (see Definition (\ref{extent-def})).
\end{definition}

The first problem to address is to find a good source of modular tunnels. We now prove that tunnels can be constructed from the type of modular bridges at the basis of \cite{Latremoliere16c}, whose definition we recalled as Definition (\ref{modular-bridge-def}). To this end, we make two remarks. First, our construction below does not, as far as we can tell, satisfy all the needed conditions to obtain a {\gQVB} in the sense of \cite{Latremoliere16c}, though of course it meets the more relaxed Definition (\ref{metrized-bundle-def}) we now use. This is not a small part of our decision to relax our definition.

Second of all, we note that given any modular bridge $\gamma$, there always exists a modular bridge between the same domains and codomains as $\gamma$, which has at most the same length as $\gamma$, yet involve convex, balanced sets of anchors and co-anchors.

We refer to \cite{Latremoliere16c} for the definition of deck norms.

\begin{lemma}\label{convex-bridge-lemma}
  Let $\mathds{A} = (\module{M},\inner{\cdot}{\cdot}{\module{M}},\CDN_{\module{M}},\A,\Lip_\A)$ and $\mathds{B} = (\module{N},\inner{\cdot}{\cdot}{\module{N}}, \CDN_{\module{N}},\B,\Lip_\B)$ be two {\gQVB s}.

  Let $\gamma = (\D,x,\pi_\A,\pi_\B,(\omega_j)_{j\in J}, (\eta_j)_{j\in J})$ be a modular bridge from $\mathds{A}$ to $\mathds{B}$ of length $\lambda$. We define:
  \begin{equation*}
    I(J) = \left\{ (t_j)_{j \in J} \in [-1,1]^{(J)} : \sum_{j \in J} |t_j| \leq 1 \right\}\text{,}
  \end{equation*}
  where $[0,1]^{(J)}$ is the set of $J$-indexed families valued in $[0,1]$ with only finitely many nonzero value. For all $\alpha = (t_j)_{j \in J} \in I(J)$, we set:
  \begin{equation*}
    \omega_\alpha = \sum_{j \in J} t_j \omega_j \text{ and }\eta_\alpha = \sum_{j \in J} t_j \eta_j \text{.}
  \end{equation*}

  Then $\coh{\gamma} = \left\{ \D, x, \pi_\A, \pi_\B, (\omega_\alpha)_{\alpha \in I(J)}, (\eta_\alpha)_{\alpha \in I(J)} \right\}$ is a modular bridge from $\mathds{A}$ to $\mathds{B}$ whose length is at most $\lambda$, and with the same basic bridge and decknorm as $\gamma$. Moreover, $\{\omega_\alpha:\alpha\in I(J)\}$ and $\{\eta_\alpha:\alpha\in I(J)\}$ are two convex, balanced sets.
\end{lemma}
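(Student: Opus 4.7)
The plan is to verify four things in order: that $\coh{\gamma}$ is actually a modular bridge (the convex combinations still have $D$-norm at most $1$), that the underlying basic bridge $\gamma_\flat$ and its length are untouched, that the modular reach does not grow, and that the imprint does not grow either; then convexity and balancedness of the anchor/co-anchor sets will be immediate from the construction.

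First I would check the $D$-norm bound on each $\omega_\alpha$ and $\eta_\alpha$. For $\alpha = (t_j)_{j\in J}\in I(J)$, the sum defining $\omega_\alpha$ is finite and by subadditivity and absolute homogeneity of $\CDN_{\module{M}}$, together with $\CDN_{\module{M}}(\omega_j)\leq 1$, we get
\begin{equation*}
  \CDN_{\module{M}}(\omega_\alpha)\leq\sum_{j\in J}|t_j|\CDN_{\module{M}}(\omega_j)\leq\sum_{j\in J}|t_j|\leq 1\text{,}
\end{equation*}
and identically $\CDN_{\module{N}}(\eta_\alpha)\leq 1$. Since the basic bridge $(\D,x,\pi_\A,\pi_\B)$ is shared, this shows $\coh{\gamma}$ is a modular bridge from $\mathds{A}$ to $\mathds{B}$, and $\bridgelength{\gamma_\flat}{\Lip_\A,\Lip_\B}$ enters both $\bridgemodularlength{\gamma}$ and $\bridgemodularlength{\coh{\gamma}}$ identically.

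Next I would show the modular reach is preserved. The key observation is $\R$-bilinearity of $\inner{\cdot}{\cdot}{\module{M}}$ and $\inner{\cdot}{\cdot}{\module{N}}$ on real scalars: if $\alpha=(s_j)$ and $\beta=(t_j)$ are in $I(J)$, then $\inner{\omega_\alpha}{\omega_\beta}{\A}=\sum_{i,j}s_i t_j\inner{\omega_i}{\omega_j}{\A}$ and similarly for $\eta$. Using that $\bridgenorm{\gamma_\flat}{a,b}=\norm{\pi_\A(a)x-x\pi_\B(b)}{\D}$ is $\R$-linear in $(a,b)$, the triangle inequality yields
\begin{equation*}
  \bridgenorm{\gamma_\flat}{\inner{\omega_\alpha}{\omega_\beta}{\A},\inner{\eta_\alpha}{\eta_\beta}{\B}}
  \leq\sum_{i,j}|s_i||t_j|\bridgenorm{\gamma_\flat}{\inner{\omega_i}{\omega_j}{\A},\inner{\eta_i}{\eta_j}{\B}}\leq\bridgemodularreach{\gamma}\text{,}
\end{equation*}
and the symmetric version with the arguments swapped. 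Taking the supremum over $\alpha,\beta\in I(J)$ gives $\bridgemodularreach{\coh{\gamma}}\leq\bridgemodularreach{\gamma}$. Conversely, the original $\omega_j$ (resp.\ $\eta_j$) sits in the new anchor family as $\omega_{e_j}$ (resp.\ $\eta_{e_j}$) where $e_j\in I(J)$ is the standard unit, so $\bridgemodularreach{\coh{\gamma}}\geq\bridgemodularreach{\gamma}$; hence equality, which is what is meant by ``same decknorm''.

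For the imprint, I note that $\{\omega_j:j\in J\}\subseteq\{\omega_\alpha:\alpha\in I(J)\}\subseteq\{\omega\in\module{M}:\CDN_{\module{M}}(\omega)\leq 1\}$, and the analogous inclusions on the $\module{N}$ side. Enlarging the smaller set inside a nested triple can only decrease the Hausdorff distance to the ambient set (since $\sup_{x\in C}d(x,\cdot)$ is monotone decreasing in its argument set), so $\bridgeimprint{\coh{\gamma}}\leq\bridgeimprint{\gamma}$. Combining with Step~2 and Step~3,
\begin{equation*}
  \bridgemodularlength{\coh{\gamma}}=\max\{\bridgelength{\gamma_\flat}{\Lip_\A,\Lip_\B},\bridgeimprint{\coh{\gamma}}+\bridgemodularreach{\coh{\gamma}}\}\leq\bridgemodularlength{\gamma}=\lambda\text{.}
\end{equation*}
Finally, convexity and balancedness of $\{\omega_\alpha:\alpha\in I(J)\}$ and $\{\eta_\alpha:\alpha\in I(J)\}$ follow from the corresponding properties of $I(J)$: if $\alpha,\beta\in I(J)$ and $s\in[0,1]$ then $s\alpha+(1-s)\beta\in I(J)$ by convexity of the $\ell^1$-ball, and $c\alpha\in I(J)$ for $|c|\leq 1$, while $\omega_{s\alpha+(1-s)\beta}=s\omega_\alpha+(1-s)\omega_\beta$ and $\omega_{c\alpha}=c\omega_\alpha$ by linearity of $\alpha\mapsto\omega_\alpha$, and similarly for $\eta$. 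There is no genuine obstacle here; the only spot requiring attention is the bilinear expansion in the modular reach step, where one must match the real scalars against the $\R$-linearity of the bridge seminorm rather than invoke any form of complex linearity.
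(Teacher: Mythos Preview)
Your argument is essentially the paper's, and it correctly establishes that $\coh{\gamma}$ is a modular bridge of length at most $\lambda$ with the same basic bridge. There is, however, a small gap concerning the phrase ``same decknorm.'' The decknorm $\decknorm{\gamma}{\cdot,\cdot}$ is a function on all of $\module{M}\times\module{N}$, not a single number, and the lemma asserts $\decknorm{\coh{\gamma}}{\omega,\eta}=\decknorm{\gamma}{\omega,\eta}$ for every $(\omega,\eta)$. You only show $\bridgemodularreach{\coh{\gamma}}=\bridgemodularreach{\gamma}$ and then identify this with ``same decknorm,'' which is not what the statement means.

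The fix is already implicit in your computation: instead of expanding bilinearly over both $\alpha$ and $\beta$, fix arbitrary $\omega\in\module{M}$, $\eta\in\module{N}$ and expand only in the test anchor $\alpha=(t_j)_{j\in J}$:
\begin{equation*}
  \bridgenorm{\gamma_\flat}{\inner{\omega}{\omega_\alpha}{\A},\inner{\eta}{\eta_\alpha}{\B}}
  \leq \sum_{j\in J}|t_j|\,\bridgenorm{\gamma_\flat}{\inner{\omega}{\omega_j}{\A},\inner{\eta}{\eta_j}{\B}}
  \leq \decknorm{\gamma}{\omega,\eta}\text{,}
\end{equation*}
and similarly for the swapped inner products, giving $\decknorm{\coh{\gamma}}{\omega,\eta}\leq\decknorm{\gamma}{\omega,\eta}$. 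The reverse inequality follows, as you note, from $\{\omega_j\}\subseteq\{\omega_\alpha\}$ and $\{\eta_j\}\subseteq\{\eta_\alpha\}$. This is exactly how the paper proceeds; your modular-reach computation is then a corollary rather than the main point.
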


\begin{proof}
  Let $\alpha = (t_j)_{j \in J} \in I(J)$. We then check:
  \begin{equation*}
    \CDN_{\module{M}}(\omega_\alpha) \leq \sum_{j \in J} |t_j| \CDN_{\module{M}}(\omega_j) \leq \sum_{j\in J} |t_j| \leq 1\text{.}
  \end{equation*}
  Similarly, $\CDN_{\module{N}}(\eta_\alpha) \leq 1$. So $\coh{\gamma}$ is indeed a modular bridge. 

  Since $\{\omega_j : j \in J\} \subseteq \left\{\omega_\alpha : \alpha \in I(J)\right\}$ and  $\{\eta_j : j \in J\} \subseteq \left\{\eta_\alpha : \alpha \in I(J)\right\}$, we conclude:
  \begin{equation*}
    \bridgeimprint{\coh{\gamma}} \leq \bridgeimprint{\gamma} \text{.}
  \end{equation*}

Moreover, for the same reason, we have $\decknorm{\gamma}{\cdot,\cdot} \leq \decknorm{\coh{\gamma}}{\cdot,\cdot}$. On the other hand, if $\omega \in \module{M}$ and $\eta\in\module{N}$:
\begin{align*}
  \norm{\inner{\omega}{\omega_\alpha}{\A} x - x \inner{\eta}{\eta_\alpha}{\B}}{\D} 
  &= \norm{\sum_{j \in J} t_j \left( \inner{\omega}{\omega_j}{\A} x - x \inner{\eta}{\eta_j}{\B} \right)}{\D} \\
  &\leq \sum_{j \in J} |t_j| \norm{\inner{\omega}{\omega_j}{\A} x - x  \inner{\eta}{\eta_j}{\B}}{\D} \\
  &\leq  \sum_{j \in J} |t_j| \decknorm{\gamma}{\omega,\eta} = \decknorm{\gamma}{\omega,\eta} \text{.}
\end{align*}

Hence $\decknorm{\coh{\gamma}}{\omega,\eta} \leq \decknorm{\gamma}{\omega,\eta}$. Therefore $\decknorm{\coh{\gamma}}{\cdot,\cdot} = \decknorm{\gamma}{\cdot,\cdot}$.

Moreover, a similar computation shows that $\bridgemodularreach{\coh{\gamma}} = \bridgemodularreach{\gamma}$: if $\beta = (t_j)_{j\in J} \in I(J)$ then:
\begin{equation*}
  \decknorm{\coh{\gamma}}{\omega_\beta,\eta_\beta} \leq \sum_{j \in J} |t_j| \decknorm{\gamma}{\omega_j,\eta_j} \leq \bridgemodularreach{\gamma}
\end{equation*}
so $\bridgemodularreach{\coh{\gamma}} \leq \bridgemodularreach{\gamma}$; on the other hand:
\begin{multline*}
  \bridgemodularreach{\coh{\gamma}} = \sup_{\beta\in I(J)} \decknorm{\coh{\gamma}}{\omega_\beta,\eta_\beta} \\ = \sup_{\beta\in I(J)} \decknorm{\gamma}{\omega_\beta,\eta_\beta} \geq \sup_{j\in J} \decknorm{\gamma}{\omega_j,\eta_j} = \bridgemodularreach{\gamma} \text{.}
\end{multline*}

By construction, the basic bridge of $\coh{\gamma}$ is the basic bridge $\gamma_\flat$ of $\gamma$. So $\bridgelength{\coh{\gamma}_\flat}{\Lip_\A,\Lip_\B} = \bridgelength{\gamma_\flat}{\Lip_\A,\Lip_\B}$.

Therefore, $\bridgemodularlength{\coh{\gamma}} \leq \bridgemodularlength{\gamma}$ as desired.

It is immediate to check that $\{\omega_\alpha:\alpha\in I(J)\}$ and $\{\eta_\alpha:\alpha\in I(J)\}$ are two convex, balanced sets.
\end{proof}

\begin{remark}\label{convex-bridge-rmk}
  The bridges constructed in Lemma (\ref{convex-bridge-lemma}) have the additional property that not only their sets of anchors and co-anchors are closed, convex and balanced, but also that there is a natural pairing between convex combinations of anchors and co-anchors. Using the notations of Lemma (\ref{convex-bridge-lemma}), if $\alpha_1 = (t_j)_{j\in J},\alpha_2 =(s_j)_{j\in J} \in I(J)$, and if $r \in [0,1]$, then of course $r \alpha_1 + (1-r) \alpha_2 = (r t_j + (1-r) s_j)_{j\in J}$ is also an element of $I(J)$. The notable fact is that $r \omega_{\alpha_1} + (1-r) \omega_{\alpha_2} = \omega_{r\alpha_1 + (1-r)\alpha_2}$ and at the same time, $r \eta_{\alpha_1} + (1-r) \eta_{\alpha_2} = \eta_{r\alpha_1 + (1-r)\alpha_2}$. Thus, if we take two anchors and their associated co-anchors, then their convex combinations are also associated anchors and co-anchors.
\end{remark}

We now prove that modular tunnels can be built from modular bridges. The assumptions of Theorem (\ref{tunnel-from-bridge-thm}) below are designed to fit the conclusion of Lemma (\ref{convex-bridge-lemma}) and Remark (\ref{convex-bridge-rmk}).

\begin{notation}
  The closure of a  subset $A$ of a  topological space is denoted by $\mathrm{cl}(A)$.
\end{notation}

\begin{theorem}\label{tunnel-from-bridge-thm}
  Let $\mathds{A} = (\module{M},\inner{\cdot}{\cdot}{\module{M}},\CDN_{\module{M}},\A,\Lip_\A)$ and $\mathds{B} = (\module{N},\inner{\cdot}{\cdot}{\module{N}},\CDN_{\module{N}},\B,\Lip_\B)$ be two {\QVB{F}{H}s}. Let:
  \begin{equation*}
    \gamma = (\D,x,\pi_\A,\pi_\B,(\omega_j)_{j\in J}, (\eta_j)_{j\in J})
  \end{equation*} 
  be a modular bridge from $\mathds{A}$ to $\mathds{B}$ of length $\lambda$ and such that $\{\omega_j:j\in J\}$ and $\{\eta_j:j\in J \}$ are convex balanced sets, with the additional property that:
  \begin{equation*}
    \forall j,k \in J \quad \forall t \in [0,1] \quad \exists q \in J \quad t\omega_j + (1-t)\omega_k = \omega_q \text{ and }t\eta_j + (1-t)\eta_k = \eta_q \text{.}
  \end{equation*}

  Let $\varepsilon \geq 0$ such that $\lambda + \varepsilon > 0$.

  Let $\module{P} = \module{M}\oplus\module{N}$, seen as an $\alg{E} = \A\oplus\B$ Hilbert module using the left action:
  \begin{equation*}
    \forall a\in\A, b \in \B \; \forall \omega\in\module{M}, \eta \in \module{N} \quad (a,b)\cdot(\omega,\eta) = (a\omega,b\eta)
  \end{equation*}
  and with inner product:
  \begin{equation*}
    \forall (\omega_1,\eta_1),(\omega_2,\eta_2) \in \module{P} \quad \inner{(\omega_1,\eta_1)}{(\omega_2,\eta_2)}{\module{P}} = \left( \inner{\omega_1}{\omega_2}{\module{M}}, \inner{\eta_1}{\eta_2}{\module{N}} \right) \text{.}
  \end{equation*}

  For all $a\in\dom{\Lip_\A}$, $b\in\dom{\Lip_\B}$, we set:
  \begin{equation*}
    \Lip(a,b) = \max\left\{ \Lip_\A(a), \Lip_\B(b), \frac{1}{\lambda + \varepsilon} \bridgenorm{\gamma}{a,b} \right\} \text{.}
  \end{equation*}

  Let:
  \begin{equation*}
    \mathcal{D} = \mathrm{cl}\left(\bigcup_{j \in J}\left\{ (\omega,\eta) \in \module{P} : \KantorovichMod{\CDN_{\module{M}}}(\omega,\omega_j) \leq \bridgeimprint{\gamma}, \KantorovichMod{\CDN_{\module{N}}}(\eta,\eta_j) \leq \bridgeimprint{\gamma}  \right\}\right)
  \end{equation*}
  where the closure is for the norm $\norm{\cdot}{\module{P}}$, and let $\mathsf{p}$ be the Minkowsky gauge functional of the closed, balanced convex $\mathcal{D}$.

  We then set for all $\omega\in\module{M}$, $\eta\in\module{N}$:
  \begin{equation*}
    \CDN(\omega,\eta) = \max\left\{ \CDN_{\module{M}}(\omega), \CDN_{\module{N}}(\eta),
      \frac{1}{\lambda + \varepsilon} \decknorm{\gamma}{\omega,\eta}, \mathsf{p}(\omega,\eta)  \right\}\text{.}
  \end{equation*}
  Then $\mathds{D} = \left( \module{P}, \inner{\cdot}{\cdot}{\module{P}}, \CDN, \alg{E}, \Lip \right)$ is a {\gQVB}.
  
  Let $\theta_\A : (a,b) \in \alg{E} \mapsto a \in \A$ and $\theta_\B : (a,b) \in \alg{E} \mapsto b \in \B$ and $\Theta_\A : (\omega,\eta) \in \module{P} \mapsto \omega \in \module{M}$ and $\Theta_\B : (\omega,\eta) \in \module{P} \mapsto \eta \in \module{N}$. Then:
  \begin{equation*}
    \left( \mathds{D}, (\theta_\A,\Theta_\A), (\theta_\B,\Theta_\B) \right)
  \end{equation*}
  is an $(F,H)$-modular tunnel of length $\lambda + \varepsilon$.
\end{theorem}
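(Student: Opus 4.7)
The statement decomposes into three tasks: (i) confirming that the basic quadruple $(\alg{E},\Lip,\theta_\A,\theta_\B)$ is an $F$-tunnel of extent at most $\lambda+\varepsilon$, (ii) verifying that $\mathds{D}$ is a bona fide \QVB{F}{H}, and (iii) confirming that $(\theta_\A,\Theta_\A)$ and $(\theta_\B,\Theta_\B)$ are modular quantum isometries. Task (i) is essentially immediate: since $\lambda+\varepsilon\geq\lambda\geq\bridgelength{\gamma_\flat}{\Lip_\A,\Lip_\B}$, Theorem \ref{tunnel-from-bridge-0-thm} applied to the bridge $\gamma_\flat$ with parameter $\lambda+\varepsilon$ produces exactly the seminorm $\Lip$ of the statement and yields the tunnel structure of extent at most $\lambda+\varepsilon$; in particular $(\alg{E},\Lip)$ is a \Qqcms{F}\ with $\theta_\A$ and $\theta_\B$ quantum isometric quotients.

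For task (ii), I plan to check the axioms of Definition \ref{metrized-bundle-def} in order. The seminorm $\CDN$ is well-defined and finite on $\dom{\CDN}=\dom{\CDN_{\module{M}}}\oplus\dom{\CDN_{\module{N}}}$, which is dense in $\module{P}$, and one has $\CDN(\omega,\eta)\geq\max\{\CDN_{\module{M}}(\omega),\CDN_{\module{N}}(\eta)\}\geq\norm{(\omega,\eta)}{\module{P}}$ directly from the definition. Compactness of the closed unit ball of $\CDN$ for $\norm{\cdot}{\module{P}}$ reduces to the compactness of the unit balls of $\CDN_{\module{M}}$ and $\CDN_{\module{N}}$, since the extra constraints defining $\{\CDN\leq 1\}$ (the deck-norm bound and membership in the closed balanced convex set $\mathcal{D}$) are closed in $\module{P}$. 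The main obstacle is the inner quasi-Leibniz inequality: given $(\omega_1,\eta_1),(\omega_2,\eta_2)\in\dom{\CDN}$, I would bound the three summands in $\Lip$ separately. The $\Lip_\A$ and $\Lip_\B$ components are handled via the inner quasi-Leibniz inequalities of $\mathds{A}$ and $\mathds{B}$ together with monotonicity of $H$. The bridge-seminorm term $\frac{1}{\lambda+\varepsilon}\bridgenorm{\gamma_\flat}{\Re\inner{\omega_1}{\omega_2}{\module{M}},\Re\inner{\eta_1}{\eta_2}{\module{N}}}$ is the delicate one: I would exploit the Minkowsky-gauge estimate $\mathsf{p}(\omega_2,\eta_2)\leq\CDN(\omega_2,\eta_2)$ together with the pairing hypothesis between anchors and co-anchors to approximate $(\omega_2,\eta_2)$ by an anchor--coanchor pair $(\omega_j,\eta_j)$ up to $\KantorovichMod{}$-errors of order $\bridgeimprint{\gamma}\cdot\CDN(\omega_2,\eta_2)$, reducing the bridge-seminorm estimate to the deck norm $\decknorm{\gamma}{\omega_1,\eta_1}$, itself bounded by $(\lambda+\varepsilon)\cdot\CDN(\omega_1,\eta_1)$, while the leftover errors are absorbed using $\bridgenorm{\gamma_\flat}{a,0}\leq\norm{a}{\A}$ and $\norm{x}{\D}=1$.

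Task (iii) amounts, by symmetry, to verifying that $(\theta_\A,\Theta_\A)$ is a modular quantum isometry. The map $\theta_\A$ is a quantum isometry by task (i), and $\Theta_\A$ is trivially surjective with the required module-morphism and inner-product compatibility built into the definition of $\mathds{D}$. The quotient D-norm identity $\CDN_{\module{M}}(\omega)=\inf\{\CDN(\omega,\eta):\eta\in\module{N}\}$ reduces, after the trivial inequality $\inf\geq\CDN_{\module{M}}(\omega)$, to exhibiting, for any $\omega$ with $\CDN_{\module{M}}(\omega)=1$, an $\eta$ with $\CDN(\omega,\eta)\leq 1$. Using the imprint bound I would select $j\in J$ with $\KantorovichMod{\CDN_{\module{M}}}(\omega,\omega_j)\leq\bridgeimprint{\gamma}$ and take $\eta=\eta_j$. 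Then $\CDN_{\module{N}}(\eta_j)\leq 1$ by construction; the pair $(\omega,\eta_j)$ lies in $\mathcal{D}$ by definition, hence $\mathsf{p}(\omega,\eta_j)\leq 1$; and decomposing $\omega=\omega_j+(\omega-\omega_j)$ yields $\decknorm{\gamma}{\omega,\eta_j}\leq\decknorm{\gamma}{\omega_j,\eta_j}+\bridgeimprint{\gamma}\leq\bridgemodularreach{\gamma}+\bridgeimprint{\gamma}\leq\lambda\leq\lambda+\varepsilon$, the key estimate being $\norm{\inner{\omega-\omega_j}{\omega_k}{\module{M}}}{\A}\leq\KantorovichMod{\CDN_{\module{M}}}(\omega,\omega_j)\leq\bridgeimprint{\gamma}$ for all $k\in J$ since $\CDN_{\module{M}}(\omega_k)\leq 1$. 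Consequently $\CDN(\omega,\eta_j)\leq 1=\CDN_{\module{M}}(\omega)$, completing the proof once paired with the extent bound already established in task (i).
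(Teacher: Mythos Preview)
Your proposal is correct and follows essentially the same approach as the paper's proof: invoke Theorem~\ref{tunnel-from-bridge-0-thm} for the basic tunnel, verify the D-norm axioms (with the inner quasi-Leibniz inequality handled by approximating one argument via membership in $\mathcal{D}$ and reducing to the deck norm on the other), and then lift $\omega$ to $(\omega,\eta_j)$ for a suitable anchor index $j$ to establish the quotient D-norm identity. One small clarification on where the hypotheses enter: the special pairing assumption on anchors and co-anchors is not needed in the inner quasi-Leibniz estimate itself but rather earlier, to show that the set $\bigcup_{j\in J}\{(\omega,\eta):\KantorovichMod{\CDN_{\module{M}}}(\omega,\omega_j)\leq\bridgeimprint{\gamma},\ \KantorovichMod{\CDN_{\module{N}}}(\eta,\eta_j)\leq\bridgeimprint{\gamma}\}$ is already convex before closure, so that $\mathcal{D}$ is closed, convex, balanced and the Minkowsky gauge $\mathsf{p}$ is a bona fide seminorm; the approximation of a normalized $(\omega_2,\eta_2)$ by a single paired $(\omega_j,\eta_j)$ then follows directly from the definition of $\mathcal{D}$.
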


\begin{proof}
  We begin with proving that $\mathds{D}$ is indeed a {\QVB{F}{H}}. By Theorem (\ref{tunnel-from-bridge-0-thm}), the pair $(\alg{E},\Lip)$ is an {\Qqcms{F}}.

  It is immediate that $(\module{P},\inner{\cdot}{\cdot}{\module{P}})$ is a Hilbert $\D$-module, with $\norm{(\omega,\eta)}{\module{P}} = \max\left\{ \norm{\omega}{\module{M}}, \norm{\eta}{\module{N}}\right\}$ for all $(\omega,\eta)\in\module{P}$. It is thus enough to check that $\CDN$ is indeed a D-norm.

  To begin with, $\mathcal{D}$ is closed and convex. If
  \begin{equation*}
    (\omega,\eta),(\omega',\eta') \in \bigcup_{j \in J}\left\{ (\omega,\eta) \in \module{P} : \KantorovichMod{\CDN_{\module{M}}}(\omega,\omega_j) \leq \bridgeimprint{\gamma}, \KantorovichMod{\CDN_{\module{N}}}(\eta,\eta_j) \leq \bridgeimprint{\gamma}  \right\}
  \end{equation*}
  then there exists $j,k \in J$ such that:
  \begin{equation*}
    \KantorovichMod{\CDN_{\module{M}}}(\omega,\omega_j) \leq \bridgeimprint{\gamma} \text{ and } \KantorovichMod{\CDN_{\module{M}}}(\eta,\eta_j) \leq \bridgeimprint{\gamma}
  \end{equation*}
  and
  \begin{equation*}
    \KantorovichMod{\CDN_{\module{N}}}(\omega',\omega_k) \leq \bridgeimprint{\gamma} \text{ and } \KantorovichMod{\CDN_{\module{M}}}(\eta',\eta_k) \leq \bridgeimprint{\gamma}
  \end{equation*}
  Let now $t\in [0,1]$. By assumption, there exists $q \in J$ such that $\omega_q = t \omega_j + (1-t)\omega_k$ and $\eta_q = t \eta_j + (1-t)\eta_k$. Thus:
  \begin{equation*}
    \KantorovichMod{\CDN_{\module{M}}}(t\omega+(1-t)\omega',\omega_q) \leq t\KantorovichMod{\CDN_{\module{M}}}(\omega,\omega_j) + (1-t)t\KantorovichMod{\CDN_{\module{M}}}(\omega',\omega_k) \leq \bridgeimprint{\gamma}
  \end{equation*}
  and similarly, $\KantorovichMod{\CDN_{\module{N}}}(t\eta+(1-t)\eta',\eta_q) \leq \bridgeimprint{\gamma}$. Thus by construction:
  \begin{equation*}
    t (\omega,\eta) + (1-t)(\omega',\eta') \in  \bigcup_{j \in J}\left\{ (\omega,\eta) \in \module{P} : \KantorovichMod{\CDN_{\module{M}}}(\omega,\omega_j) \leq \bridgeimprint{\gamma}, \KantorovichMod{\CDN_{\module{N}}}(\eta,\eta_j) \leq \bridgeimprint{\gamma}  \right\}\text{.}
  \end{equation*}
  As the closure of a convex set, we indeed proved that $\mathcal{D}$ is a closed convex subset of $\mathcal{P}$. 

  The closed convex set $\mathcal{D}$ is also balanced since $0\in\mathcal{D}$. Consequently, $\mathsf{p}$ is a lower semi-continuous seminorm defined on the span of $\mathcal{D}$, which contains $\dom{\CDN_{\module{M}}}\times\dom{\CDN_{\module{N}}}$ by Definition (\ref{imprint-def}) of $\bridgeimprint{\gamma}$ (simply noting that, for instance, picking any $i\in J$,  we have $\max\{\KantorovichMod{\CDN_{\module{M}}}(\omega,\omega_i),\KantorovichMod{\CDN_{\module{N}}}(\eta,\eta_i)\} < \infty$ and  then scaling). It then follows that $\CDN$ is a norm defined (in particular, taking finite values) on $\dom{\CDN_{\module{M}}}\times\dom{\CDN_{\module{N}}}$ which is dense in $\module{P}$ (note that $\CDN_{\module{M}}$ and $\CDN_{\module{N}}$ are indeed norms on their respective domains).

As the supremum of lower semi-continuous functions and continuous functions, $\CDN$ is lower semi-continuous.

Now, by construction:
\begin{equation*}
  \left\{ (\omega,\eta) \in \module{P} : \CDN(\omega,\eta)\leq 1 \right\} \subseteq \{\omega\in\module{M}:\CDN_{\module{M}}(\omega)\leq 1\} \times \{\eta\in\module{N}:\CDN_{\module{N}}(\eta)\leq 1\}
\end{equation*}
and the set on the right hand side is the product of two norm-compact sets, so it is itself norm-compact. Hence as a closed subset of a compact set, the unit ball of $\CDN$ is compact as well.

Furthermore, for all $(\omega,\eta)\in\module{P}$, we have:
\begin{equation*}
  \norm{(\omega,\eta)}{\module{P}} = \max\left\{ \norm{\omega}{\module{M}}, \norm{\eta}{\module{N}} \right\} \leq \max\{ \CDN_{\module{M}}(\omega), \CDN_{\module{N}}(\eta) \} \leq \CDN(\omega,\eta) \text{.}
\end{equation*}

We now check that $\CDN$ has satisfies an appropriate form of the inner quasi-Leibniz inequality.

Let $(\omega,\eta),(\chi,\zeta) \in \module{P}$ such that $\CDN(\omega,\eta)\leq 1$ and $\CDN(\chi,\zeta)\leq 1$. Now, if $\CDN(\chi,\zeta) \leq 1$ then $\mathsf{p}(\chi,\zeta)\leq 1$. Let $\varepsilon > 0$. Since $\KantorovichMod{\CDN_{\module{M}}}$ and $\KantorovichMod{\CDN_{\module{N}}}$ are dominated, respectively, by $\norm{\cdot}{\module{M}}$ and $\norm{\cdot}{\module{N}}$, and since $\CDN_{\module{M}}(\omega)\leq 1$, $\CDN_{\module{M}}(\eta)\leq 1$, there exists $j\in J$ such that:
\begin{equation*}
  \norm{\inner{\omega}{\chi-\omega_j}{\module{M}}}{\A} \leq \KantorovichMod{\CDN_{\module{M}}}(\chi,\omega_j) \leq \varepsilon + \bridgeimprint{\gamma}
\end{equation*}
and $\norm{\inner{\eta}{\zeta-\eta_j}{\module{N}}}{\B} \leq \KantorovichMod{\CDN_{\module{N}}}(\zeta,\eta_j)\leq \bridgeimprint{\gamma} + \varepsilon$. We then compute (using $\norm{x}{\D} = 1$):
\begin{align*}
  \bridgenorm{\gamma}{\inner{(\omega,\eta)}{(\chi,\zeta)}{\module{P}}} 
  &= \norm{\pi_\A\left(\inner{\omega}{\chi}{\A}\right) x - x \pi_\B\left(\inner{\eta}{\zeta}{\B}\right)}{\D} \\
  &= \norm{\pi_\A\left(\inner{\omega}{\chi - \omega_j}{\module{M}}\right) x - x\pi_\B\left(\inner{\eta}{\zeta-\eta_j}{\module{N}}\right)}{\D} \\
  &\quad + \norm{\pi_\A\left(\inner{\omega}{\omega_j}{\module{M}}\right) x - x \pi_\B\left(\inner{\eta}{\eta_j}{\module{N}}\right)}{\D} \\
  &\leq \norm{\inner{\omega}{\chi-\omega_j}{\module{M}}}{\A} + \norm{\inner{\eta}{\zeta-\eta_j}{\module{N}}}{\module{N}} + \decknorm{\gamma}{\omega,\eta} \\
  &\leq 2 \bridgeimprint{\gamma} + 2\varepsilon + \bridgemodularlength{\gamma} \text{ using \cite[Proposition 4.17]{Latremoliere16c}}\\
  &\leq 2 \bridgemodularlength{\gamma} + 2\varepsilon \text{.}
\end{align*}
Since $\varepsilon > 0$ is arbitrary, we conclude:
\begin{equation*}
    \bridgenorm{\gamma}{\inner{(\omega,\eta)}{(\chi,\zeta)}{\module{P}}} \leq 2\bridgemodularlength{\gamma} \text{.}
\end{equation*}

Therefore, if $(\omega,\eta), (\chi,\zeta) \in \dom{\CDN}\setminus\{(0,0)\}$, then by homogeneity:
\begin{align*}
  \bridgenorm{\gamma}{\inner{(\omega,\eta)}{(\chi,\zeta)}{\module{P}}} &\leq 2 \bridgemodularlength{\gamma}  \CDN(\omega,\eta)\CDN(\chi,\zeta) \\
  &\leq 2 \CDN(\omega,\eta) \CDN(\chi,\zeta) \bridgemodularlength{\gamma} \\
  &\leq  H(\CDN(\omega,\eta),\CDN(\chi,\zeta)) (\lambda + \varepsilon) \text{.}
\end{align*}

We record $\bridgenorm{\gamma}{\Re\inner{\omega}{\chi}{\A},\Re\inner{\eta}{\zeta}{\B}} \leq \bridgenorm{\gamma}{\inner{\omega}{\chi}{\A},\inner{\eta}{\zeta}{\B}}$, since $\Re$ is a linear contraction on any C*-algebra.

From this, and since $\mathds{A}$ and $\mathds{B}$ both satisfy the $H$-inner quasi-Leibniz inequality, we conclude that for all $(\omega,\eta), (\chi,\zeta) \in \dom{\CDN}$
\begin{align*}
  \Lip\left(\Re\inner{(\omega,\eta)}{(\chi,\zeta)}{\module{P}}\right)
  &= \max\left\{ 
    \begin{array}{l}
      \Lip_\A(\Re\inner{\omega}{\chi}{\A}) \\
      \Lip_\B(\Re\inner{\eta}{\zeta}{\B}) \\
      \frac{1}{\lambda+\varepsilon} \bridgenorm{\gamma}{\inner{\omega}{\chi}{\A},\inner{\eta}{\zeta}{\B}}
    \end{array}
    \right\} \\
  &\leq H(\CDN(\omega,\eta),\CDN(\chi,\zeta)) \text{.}
\end{align*}
A similar computation shows that $\Lip\left(\Im\inner{(\omega,\eta)}{(\chi,\zeta)}{\module{P}}\right)\leq H(\CDN(\omega,\eta),\CDN(\chi,\zeta))$.  Thus the inner quasi-Leibniz inequality holds for $\CDN$ and $\Lip$. Note that it is important that we used the modular length of $\gamma$ (rather than the length of its basic bridge) in the definition of $\Lip$ at this exact point.

Hence $\CDN$ is indeed a D-norm. Therefore $(\module{P},\inner{\cdot}{\cdot}{\module{P}},\CDN,\alg{E},\Lip)$ is indeed a {\gQVB}.

To now prove that $\tau$ is a modular tunnel, we first note that the basic tunnel $(\D,\Lip_\D,\theta_\A,\theta_\B)$ of $\tau$ is a tunnel of extent no more than $\lambda + \varepsilon$ by \cite{Latremoliere14}. It is therefore sufficient to prove that $\Theta_\A$ and $\Theta_\B$ are modular quantum isometries --- the proof is identical for both maps, so we work with $\Theta_\A$.

Let $\omega \in \module{M}$ with $\CDN_{\module{M}}(\omega) \leq 1$. There exists $j \in J$ such that $\KantorovichMod{\CDN_{\module{M}}}(\omega,\omega_j) \leq \bridgeimprint{\gamma}$. Since $\KantorovichMod{\CDN_{\module{N}}}(\eta_j,\eta_j) = 0$, we conclude that $\mathsf{p}(\omega,\eta_j) \leq 1$.  Moreover:
\begin{align*}
  \decknorm{\gamma}{\omega,\eta_j} 
  &= \sup_{k \in J} \norm{\pi_\A(\inner{\omega}{\omega_k}{\module{M}}) x - x \pi_\B(\inner{\eta_j}{\eta_k}{\module{N}}) }{\D} \\
  &\leq \sup_{k\in J} \left( \norm{\inner{\omega-\omega_j}{\omega_k}{\module{M}}}{\A} + \norm{\pi_\A(\inner{\omega_j}{\omega_k}{\module{M}}) x - x \pi_\B(\inner{\eta_j}{\eta_k}{\module{N}}) }{\D}  \right) \\
  &\leq \KantorovichMod{\CDN_{\module{M}}}(\omega,\omega_j) + \bridgemodularreach{\gamma} \\
  &\leq \bridgeimprint{\gamma} + \bridgemodularreach{\gamma} \\
  &\leq \bridgemodularlength{\gamma} \leq \lambda + \varepsilon \text{.}
\end{align*}
Since $\CDN_{\module{N}}(\eta_j) \leq 1$, we conclude that $\CDN(\omega,\eta_j) \leq 1$. Thus $\Theta_\A$ is a modular isometry (by homogeneity).

Thus $\tau$ is indeed a modular tunnel of extent no more than $\lambda + \varepsilon$. This concludes our proof.
\end{proof}

A corollary of Theorem (\ref{tunnel-from-bridge-thm}), thanks to the construction of Lemma (\ref{convex-bridge-lemma}), is that all modular bridges give rise to modular tunnels.

\begin{corollary}\label{tunnel-from-bridge-cor}
  If $\mathds{A}$ and $\mathds{B}$ are two {\QVB{F}{H}s}, and if $\gamma$ is a modular bridge of length at most $\lambda$, then for all $\varepsilon \geq 0$ such that $\lambda + \varepsilon > 0$, there exists an $(F,H)$-modular tunnel $\tau$ from $\mathds{A}$ to $\mathds{B}$ of extent at most $\lambda + \varepsilon$.
\end{corollary}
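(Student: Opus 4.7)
The plan is a short chaining argument that directly assembles Lemma \ref{convex-bridge-lemma}, Remark \ref{convex-bridge-rmk}, and Theorem \ref{tunnel-from-bridge-thm}. Starting with the given modular bridge $\gamma$ from $\mathds{A}$ to $\mathds{B}$ of length at most $\lambda$, I would first replace it with the bridge $\coh{\gamma}$ furnished by Lemma \ref{convex-bridge-lemma}. This bridge has the same basic bridge, the same decknorm, anchors and co-anchors indexed by $I(J)$, and length at most $\lambda$, and crucially its anchor and co-anchor sets are convex and balanced.

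The second step is to verify that $\coh{\gamma}$ satisfies the extra pairing hypothesis required by Theorem \ref{tunnel-from-bridge-thm}, namely that convex combinations of anchors correspond, via the \emph{same} index, to the matching convex combinations of co-anchors. This is precisely the content of Remark \ref{convex-bridge-rmk}: given $\alpha_1, \alpha_2 \in I(J)$ and $t \in [0,1]$, the convex combination $t\alpha_1 + (1-t)\alpha_2$ is again an element of $I(J)$, and by linearity of the definitions of $\omega_\alpha$ and $\eta_\alpha$ in Lemma \ref{convex-bridge-lemma}, one has simultaneously
\begin{equation*}
  t\omega_{\alpha_1} + (1-t)\omega_{\alpha_2} = \omega_{t\alpha_1 + (1-t)\alpha_2} \quad\text{and}\quad t\eta_{\alpha_1} + (1-t)\eta_{\alpha_2} = \eta_{t\alpha_1 + (1-t)\alpha_2}\text{,}
\end{equation*}
so one may take $q = t\alpha_1 + (1-t)\alpha_2$ as the required index.

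With the pairing hypothesis secured and $\bridgemodularlength{\coh{\gamma}} \leq \lambda$, the third and final step is simply to invoke Theorem \ref{tunnel-from-bridge-thm} applied to $\coh{\gamma}$ with the given $\varepsilon$, which produces an $(F,H)$-modular tunnel $\tau$ from $\mathds{A}$ to $\mathds{B}$ of extent at most $\bridgemodularlength{\coh{\gamma}} + \varepsilon \leq \lambda + \varepsilon$.

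I do not expect any genuine obstacle here: all the analytical work — the compactness of D-norm unit balls, the inner quasi-Leibniz estimate routed through the decknorm, and the verification that the projections $\Theta_\A, \Theta_\B$ are modular quantum isometries — has been absorbed into Theorem \ref{tunnel-from-bridge-thm}, and the convexification/pairing issue has been absorbed into Lemma \ref{convex-bridge-lemma} and Remark \ref{convex-bridge-rmk}. The only point worth stating explicitly in the write-up is the verification of the pairing property for $\coh{\gamma}$, since it is what licenses the application of Theorem \ref{tunnel-from-bridge-thm} in its precise form.
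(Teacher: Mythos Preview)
Your proposal is correct and follows exactly the paper's own argument: replace $\gamma$ by $\coh{\gamma}$ via Lemma~\ref{convex-bridge-lemma}, invoke Remark~\ref{convex-bridge-rmk} for the pairing hypothesis, and apply Theorem~\ref{tunnel-from-bridge-thm}. The only cosmetic point is that when you apply Theorem~\ref{tunnel-from-bridge-thm} ``with the given $\varepsilon$'', you should strictly use $\varepsilon' = \lambda + \varepsilon - \bridgemodularlength{\coh{\gamma}} \geq 0$ so that the positivity condition $\bridgemodularlength{\coh{\gamma}} + \varepsilon' = \lambda + \varepsilon > 0$ is guaranteed by the corollary's hypothesis rather than by $\bridgemodularlength{\coh{\gamma}} + \varepsilon > 0$, which need not hold.
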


\begin{proof}
  We apply Theorem (\ref{tunnel-from-bridge-thm}) to the modular bridge $\coh{\gamma}$ constructed from $\gamma$ by Lemma (\ref{convex-bridge-lemma}), noting that Remark (\ref{convex-bridge-rmk}) applies.
\end{proof}

A first, important corollary of Theorem (\ref{tunnel-from-bridge-thm}), is that modular tunnels actually exist.

\begin{notation}
  If $(E,d)$ is a metric space, then we write $\diam{E}{d}$ for its diameter. If $(\A,\Lip)$ is a {\qcms} then the diameter $\diam{\StateSpace(\A)}{\Kantorovich{\Lip}}$ of $\StateSpace(\A)$ for the {\MongeKant} $\Kantorovich{\Lip}$ is simply denoted by $\diam{\A}{\Lip}$.
\end{notation}

\begin{corollary}\label{diam-bound-cor}
  If $\mathds{A}$ and $\mathds{B}$ are two {\QVB{F}{H}s} then there exists a $(F,H)$-modular tunnel $\tau$ from $\mathds{A}$ to $\mathds{B}$ of extent at most:
  \begin{equation*}
    \max\left\{ 2, \diam{\A}{\Lip_\A}, \diam{\B}{\Lip_\B} \right\}\text{.}
  \end{equation*}
\end{corollary}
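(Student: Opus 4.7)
The plan is to produce an explicit modular bridge from $\mathds{A}$ to $\mathds{B}$ whose modular length is bounded by $\max\{1, \diam{\A}{\Lip_\A}, \diam{\B}{\Lip_\B}\}$, and then invoke Corollary (\ref{tunnel-from-bridge-cor}) to convert it into a modular tunnel of extent at most $\max\{2, \diam{\A}{\Lip_\A}, \diam{\B}{\Lip_\B}\}$.

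For the basic bridge, I take $\D = \A \otimes_{\min} \B$, pivot $x = \unit_\D$, and the canonical embeddings $\pi_\A : a \mapsto a \otimes \unit_\B$ and $\pi_\B : b \mapsto \unit_\A \otimes b$. Because $x$ is the identity, $\StateSpace_1(\D | x) = \StateSpace(\D)$. Every state $\varphi \in \StateSpace(\A)$ extends as the product state $\varphi \otimes \psi$ on $\D$ for any chosen $\psi \in \StateSpace(\B)$, and any state on $\D$ restricts via $\pi_\A$ to a state of $\A$; thus $\{\varphi \circ \pi_\A : \varphi \in \StateSpace_1(\D | x)\} = \StateSpace(\A)$, and the same holds for $\pi_\B$. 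Consequently $\bridgeheight{\gamma_\flat}{\Lip_\A,\Lip_\B} = 0$. For the reach, given $a \in \dom{\Lip_\A}$ with $\Lip_\A(a) \leq 1$, fix any $\varphi \in \StateSpace(\A)$ and set $b = \varphi(a) \unit_\B$, so that $\Lip_\B(b) = 0 \leq 1$; since $a$ is self-adjoint,
\begin{equation*}
    \bridgenorm{\gamma_\flat}{a,b} = \norm{a - \varphi(a) \unit_\A}{\A} = \sup_{\psi \in \StateSpace(\A)} |\psi(a) - \varphi(a)| \leq \diam{\A}{\Lip_\A}\text{,}
\end{equation*}
and a symmetric estimate bounds the reverse direction by $\diam{\B}{\Lip_\B}$. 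Hence $\bridgelength{\gamma_\flat}{\Lip_\A,\Lip_\B} \leq \max\{\diam{\A}{\Lip_\A}, \diam{\B}{\Lip_\B}\}$.

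For the modular data, I use the simplest possible indexed families: take $J = \{1\}$ with $\omega_1 = 0 \in \module{M}$ and $\eta_1 = 0 \in \module{N}$, both admissible since $\CDN_{\module{M}}(0) = \CDN_{\module{N}}(0) = 0$. For any $\omega$ in the unit ball of $\CDN_{\module{M}}$, the Cauchy--Schwarz type estimate
\begin{equation*}
    \norm{\inner{\omega}{\zeta}{\module{M}}}{\A} \leq \norm{\omega}{\module{M}} \norm{\zeta}{\module{M}} \leq \CDN_{\module{M}}(\omega) \CDN_{\module{M}}(\zeta) \leq 1
\end{equation*}
yields $\KantorovichMod{\CDN_{\module{M}}}(\omega,0) \leq 1$, and analogously for $\module{N}$, so $\bridgeimprint{\gamma} \leq 1$. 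The modular reach vanishes outright: the only term in $\decknorm{\gamma}{0,0}$ is $\bridgenorm{\gamma_\flat}{0,0} = 0$. Therefore $\bridgemodularlength{\gamma} \leq \max\{\diam{\A}{\Lip_\A}, \diam{\B}{\Lip_\B}, 1\}$.

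Setting $\lambda = \bridgemodularlength{\gamma}$ and $\varepsilon = \max\{2,\diam{\A}{\Lip_\A},\diam{\B}{\Lip_\B}\} - \lambda$, which is nonnegative since the right-hand side is at least $2 > 1 \geq \lambda / \max\{\ldots\}$ configurations, one has $\lambda + \varepsilon \geq 2 > 0$, so Corollary (\ref{tunnel-from-bridge-cor}) applies and delivers a modular tunnel of extent no more than $\lambda + \varepsilon = \max\{2,\diam{\A}{\Lip_\A},\diam{\B}{\Lip_\B}\}$. There is no real obstacle here; the mild trick is simply that choosing the anchors to be zero forces the modular reach to vanish and lets the imprint be controlled by the module norm alone, which is why the bound depends only on the base-level diameters (up to the additive safety margin of $2$).
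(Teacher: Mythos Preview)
Your proof is correct and follows the same strategy as the paper: build a single-anchor modular bridge over a basic bridge of length at most $\max\{\diam{\A}{\Lip_\A},\diam{\B}{\Lip_\B}\}$, then apply Corollary~(\ref{tunnel-from-bridge-cor}). The paper cites \cite[Proposition 4.6]{Latremoliere13} for the basic bridge and picks an arbitrary unit-ball anchor pair, whereas your explicit tensor-product bridge with unit pivot and your choice of zero anchors make the imprint and modular-reach estimates a touch sharper.
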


\begin{proof}
  Write $\mathds{A} = (\module{M},\inner{\cdot}{\cdot}{\module{M}},\CDN_{\module{M}}, \A,\Lip_\A)$ and $\mathds{B} = (\module{N},\inner{\cdot}{\cdot}{\module{N}},\CDN_{\module{N}}, \B ,\Lip_\B)$.

  By \cite[Proposition 4.6]{Latremoliere13}, there exists a bridge $(\D,x,\pi_\A,\pi_\B)$ with $\norm{x}{\D} = 1$, of length at most $\max\left\{\diam{\A}{\Lip_\A},\diam{\B}{\Lip_\B}\right\}$. Pick any $\omega\in\module{M}$ with $\CDN_{\module{M}}(\omega)\leq 1$ and $\eta\in\module{N}$ with $\CDN_{\module{N}}(\eta)\leq 1$. It is then immediate that if:
  \begin{equation*}
    \delta = (\D,x,\pi_\A,\pi_\B, (\omega), (\eta))
  \end{equation*}
  then $\delta$ is a modular bridge with length at most:
  \begin{equation*}
    \max\left\{ 2, \diam{\A}{\Lip_\A}, \diam{\B}{\Lip_\B} \right\} \text{.}
  \end{equation*}

  By Theorem (\ref{tunnel-from-bridge-thm}), we thus conclude that there exists a $(F,H)$-modular tunnel of extent at most $\max\left\{ 2, \diam{\A}{\Lip_\A}, \diam{\B}{\Lip_\B} \right\}$ constructed from $\gamma$.
\end{proof}

We now describe another mean to construct modular tunnels, by almost composition of other modular tunnels. This construction will in fact ensures that our modular propinquity will satisfy the triangle inequality. The proof extends \cite{Latremoliere14}.

\begin{theorem}\label{triangle-thm}
  Let $\mathds{A}$, $\mathds{B}$ and $\mathds{E}$ be three {\QVB{F}{H}s}. Write $\mathds{B} = (\module{M},\inner{\cdot}{\cdot}{\module{M}},\CDN_{\module{M}},\B,\Lip_\B)$.

  Let $\tau_1 = (\mathds{D}_1,(\theta_\A,\Theta_\A),(\theta_\B,\Theta_\B))$ be a modular tunnel from $\mathds{A}$ to $\mathds{B}$ with $\mathds{D}_1 = (\module{P},\inner{\cdot}{\cdot}{\module{P}},\CDN_1,\D_1,\Lip^1)$. Let $\tau_2 = (\mathds{D}_2, (\pi_\B,\Pi_\B), (\pi_{\alg{E}},\Pi_{\alg{E}}))$ be a modular tunnel from $\mathds{B}$ to $\mathds{E}$, with $\mathds{D}_2 = (\module{R},\inner{\cdot}{\cdot}{\module{R}},\CDN_2,\D_2,\Lip^2)$.

  Let $\D = \D_1 \oplus \D_2$ and, for all $(d_1,d_2) \in \sa{\D}$, set:
  \begin{equation*}
    \Lip(d_1,d_2) = \max\left\{ \Lip^1(d_1), \Lip^2(d_2), \frac{1}{\varepsilon}\norm{\theta_\B(d_1) - \pi_\B(d_2)}{\B} \right\} \text{.}
  \end{equation*}

  Let $\module{B} = \module{P} \oplus \module{R}$, see as a Hilbert $\D$-module with the action:
  \begin{equation*}
    \forall d_1\in\D_1,d_2\in \D_2,\omega_1\in\module{P},\omega_2\in\module{R} \quad (d_1,d_2)\cdot(\omega_1,\omega_2) = (d_1 \omega_1, d_2 \omega_2)
  \end{equation*}
  and inner product:
  \begin{equation*}
    \forall \omega_1,\eta_1\in\module{P},\omega_2,\eta_2\in\module{R} \quad \inner{(\omega_1,\omega_2)}{(\eta_1,\eta_2)}{\module{B}} = \left(\inner{\omega_1}{\eta_1}{\module{P}}, \inner{\omega_2}{\eta_2}{\module{R}} \right)\text{.}
  \end{equation*}

  We define, for all $(\omega,\eta)\in\module{B}$:
  \begin{equation*}
    \CDN(\omega,\eta) = \max\left\{ \CDN_1(\omega), \CDN_2(\eta), \frac{1}{\varepsilon}\norm{\Theta_\B(\omega) - \Pi_\B(\eta)}{\module{M}}  \right\}\text{.}
  \end{equation*}

  Let
  \begin{equation*}
    \chi_\A : (d_1,d_2) \in \D \mapsto \theta_\A(d_1) \text{ and }\Xi_{\A} : (\omega,\eta)\in\module{B} \mapsto \Theta_\A(\omega) \text{,}
  \end{equation*}
  and
  \begin{equation*}
    \upsilon_{\alg{E}} : (d_1,d_2) \in \D \mapsto \pi_{\alg{E}}(d_2)\text{ and  }\Upsilon_{\alg{E}} : (\omega,\eta)\in\module{B} \mapsto \Pi_{\alg{E}}(\eta) \text{.}
  \end{equation*}

  If $\mathds{D} = \left(\module{B},\inner{\cdot}{\cdot}{\module{B}},\CDN,\D,\Lip_\D\right)$ and if $\tau = (\mathds{D},(\chi_\A, \Xi_\A),(\upsilon_{\alg{E}}, \Upsilon_{\alg{E}}))$, then $\mathds{D}$ is a {\QVB{F}{H}}, and $\tau$ is a modular tunnel from $\mathds{A}$ to $\mathds{E}$ of extent at most $\tunnelextent{\tau_1} + \tunnelextent{\tau_2} + \varepsilon$.
\end{theorem}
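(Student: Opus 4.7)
The plan is to first obtain the base-level tunnel structure from the almost-composition of tunnels in \cite{Latremoliere14}, and then to verify that the added modular data is compatible. Specifically, Theorem-Definition on appropriate classes together with the construction in \cite{Latremoliere14} applied to the basic tunnels $\tau_{1,\flat} = (\D_1,\Lip^1,\theta_\A,\theta_\B)$ and $\tau_{2,\flat} = (\D_2,\Lip^2,\pi_\B,\pi_{\alg{E}})$ immediately yields that $(\D,\Lip)$ is an $F$-{\qcms}, that $\chi_\A : (\D,\Lip)\twoheadrightarrow (\A,\Lip_\A)$ and $\upsilon_{\alg{E}} : (\D,\Lip)\twoheadrightarrow (\alg{E},\Lip_{\alg{E}})$ are quantum isometries, and that the extent of the basic tunnel $\tau_\flat = (\D,\Lip,\chi_\A,\upsilon_{\alg{E}})$ is at most $\tunnelextent{\tau_1} + \tunnelextent{\tau_2} + \varepsilon$. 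This gives the extent bound for free, and reduces the problem to checking that $\mathds{D}$ is a {\QVB{F}{H}} and that $(\chi_\A,\Xi_\A)$, $(\upsilon_{\alg{E}},\Upsilon_{\alg{E}})$ are modular quantum isometries.

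The direct sum $(\module{B},\inner{\cdot}{\cdot}{\module{B}})$ is routinely a Hilbert $\D$-module with $\norm{(\omega,\eta)}{\module{B}} = \max\{\norm{\omega}{\module{P}},\norm{\eta}{\module{R}}\}$. The candidate D-norm $\CDN$ is finite on $\dom{\CDN_1}\times\dom{\CDN_2}$ (the third summand is a norm and thus always finite there), hence densely defined. It is lower semi-continuous as the maximum of two lower semi-continuous seminorms and the continuous map $(\omega,\eta)\mapsto \tfrac{1}{\varepsilon}\norm{\Theta_\B(\omega)-\Pi_\B(\eta)}{\module{M}}$. It dominates $\norm{\cdot}{\module{B}}$ since it dominates both $\CDN_1$ and $\CDN_2$. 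Its unit ball is closed (by lower semi-continuity) inside the norm-compact product of the unit balls of $\CDN_1$ and $\CDN_2$, hence compact.

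The main technical step will be the inner quasi-Leibniz inequality. For $(\omega_1,\omega_2),(\eta_1,\eta_2)\in\dom{\CDN}$, the first two components of the max defining $\Lip\bigl(\Re\inner{(\omega_1,\omega_2)}{(\eta_1,\eta_2)}{\module{B}}\bigr)$ are handled directly by the $H$-inner quasi-Leibniz inequality for $\mathds{D}_1$ and $\mathds{D}_2$. The delicate piece is the bridge term
\begin{equation*}
  \tfrac{1}{\varepsilon}\norm{\theta_\B\bigl(\inner{\omega_1}{\eta_1}{\module{P}}\bigr) - \pi_\B\bigl(\inner{\omega_2}{\eta_2}{\module{R}}\bigr)}{\B} = \tfrac{1}{\varepsilon}\norm{\inner{\Theta_\B(\omega_1)}{\Theta_\B(\eta_1)}{\module{M}} - \inner{\Pi_\B(\omega_2)}{\Pi_\B(\eta_2)}{\module{M}}}{\B}\text{,}
\end{equation*}
where I used that $(\theta_\B,\Theta_\B)$ and $(\pi_\B,\Pi_\B)$ are Hilbert module morphisms. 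I then insert $\pm \inner{\Pi_\B(\omega_2)}{\Theta_\B(\eta_1)}{\module{M}}$ and apply the triangle inequality and the module Cauchy--Schwarz inequality, using that by construction $\norm{\Theta_\B(\omega_1)-\Pi_\B(\omega_2)}{\module{M}} \leq \varepsilon \CDN(\omega_1,\omega_2)$ and $\norm{\Theta_\B(\eta_1)-\Pi_\B(\eta_2)}{\module{M}}\leq \varepsilon \CDN(\eta_1,\eta_2)$, while $\norm{\Theta_\B(\eta_1)}{\module{M}}\leq \CDN(\eta_1,\eta_2)$ and $\norm{\Pi_\B(\omega_2)}{\module{M}}\leq \CDN(\omega_1,\omega_2)$. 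This produces the bound $2\CDN(\omega_1,\omega_2)\CDN(\eta_1,\eta_2)\leq H(\CDN(\omega_1,\omega_2),\CDN(\eta_1,\eta_2))$, and the same argument works for the imaginary part. This is the subtle point because it is what forces the penalty weight $\tfrac{1}{\varepsilon}$ to appear symmetrically in both $\Lip$ and $\CDN$.

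It remains to check that $(\chi_\A,\Xi_\A)$ is a modular quantum isometry, the case of $(\upsilon_{\alg{E}},\Upsilon_{\alg{E}})$ being symmetric. Given $\omega\in\dom{\CDN_\A}$, I use Remark (\ref{reached-remark}) applied to the modular quantum isometry $(\theta_\A,\Theta_\A)$ to pick $\omega_1\in\module{P}$ with $\Theta_\A(\omega_1)=\omega$ and $\CDN_1(\omega_1)=\CDN_\A(\omega)$. Applying Remark (\ref{reached-remark}) again to $(\pi_\B,\Pi_\B)$ for the vector $\Theta_\B(\omega_1)\in\module{M}$ produces $\eta_1\in\module{R}$ with $\Pi_\B(\eta_1)=\Theta_\B(\omega_1)$ and $\CDN_2(\eta_1)=\CDN_{\module{M}}(\Theta_\B(\omega_1))\leq\CDN_1(\omega_1)=\CDN_\A(\omega)$. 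Then $\Xi_\A(\omega_1,\eta_1)=\omega$ and $\CDN(\omega_1,\eta_1)=\CDN_\A(\omega)$ since the bridge penalty vanishes. Conversely, for any lift $(\omega',\eta')$ of $\omega$, we have $\CDN_\A(\omega)=\CDN_\A(\Theta_\A(\omega'))\leq \CDN_1(\omega')\leq \CDN(\omega',\eta')$, which gives the infimum characterization of $\CDN_\A$, and surjectivity of $\Xi_\A$ follows from surjectivity of $\Theta_\A$. Combined with the already-established base extent bound, this completes the proof.
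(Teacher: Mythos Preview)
Your proposal is correct and follows essentially the same approach as the paper: you invoke \cite{Latremoliere14} for the base tunnel and its extent bound, verify the D-norm axioms for $\CDN$ (density, lower semi-continuity, norm domination, compact unit ball), establish the inner quasi-Leibniz inequality by rewriting the bridge term via the Hilbert module morphism property and splitting with an intermediate inner product, and then prove the modular isometry by the two-step lift through $\tau_1$ and $\tau_2$, exactly as the paper does. Your explicit appeal to Remark~(\ref{reached-remark}) and your inclusion of the easy converse inequality $\CDN_\A(\omega)\leq \CDN(\omega',\eta')$ are minor presentational additions, not a different argument.
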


\begin{proof}
  First, we note that for all $(\omega,\eta) \in \module{B}$:
  \begin{equation*}
    \norm{(\omega,\eta)}{\module{B}} = \max\{\norm{\omega}{\module{P}},\norm{\eta}{\module{R}}\} \leq \max\{\CDN_1(\omega),\CDN_2(\eta)\} \leq \CDN(\omega,\eta) \text{.}
  \end{equation*}

  Moreover, $\dom{\CDN}$ is obviously dense in $\module{B}$.

  We now check the inner quasi-Leibniz condition. Let now $(\omega_1,\omega_2), (\eta_1,\eta_2) \in \module{B}$. To begin with:
  \begin{align*}
    \norm{\theta_\B(\inner{\omega_1}{\eta_1}{\module{P}}) - \pi_\B(\inner{\omega_2}{\eta_2}{\module{R}})}{\B}
    &= \norm{\inner{\Theta_\B(\omega_1)}{\Theta_\B(\eta_1)}{\module{M}} - \inner{\Pi_\B(\omega_2)}{\Pi_\B(\eta_2)}{\module{M}}}{\B}\\
    &\leq \norm{\inner{\Theta_\B(\omega_1)}{\Theta_\B(\eta_1)}{\module{M}} - \inner{\Theta_\B(\omega_1)}{\Pi_\B(\eta_2)}{\module{M}}}{\B}\\
    &\quad + \norm{\inner{\Theta_\B(\omega_1)}{\Pi_\B(\eta_2)}{\module{M}} - \inner{\Pi_\B(\omega_2)}{\Pi_\B(\eta_2)}{\module{M}}}{\B} \\
    &= \norm{\inner{\Theta_\B(\omega_1)}{\Theta_\B(\eta_1)-\Pi_\B(\eta_2)}{\module{M}}}{\B} \\
    &\quad + \norm{\inner{\Theta_\B(\omega_1) - \Pi_\B(\eta_1)}{\Pi_\B(\eta_2)}{\module{M}}}{\B} \\
    &\leq \norm{\omega_1}{\module{R}}\norm{\Theta_\B(\eta_1)-\Pi_\B(\eta_2)}{\module{M}}  \\
    &\quad + \norm{\Theta_\B(\omega_1) - \Pi_\B(\omega_2)}{\module{M}} \norm{\eta_2}{\module{R}}  \\ 
    &\leq  \varepsilon \left( \norm{\omega_1}{\module{R}}\CDN((\eta_1,\eta_2)) + \CDN((\omega_1,\omega_2))\norm{\eta_2}{\module{P}} \right) \\
    &\leq 2 \varepsilon \CDN(\omega_1,\omega_2) \CDN(\eta_1,\eta_2) \text{.}
  \end{align*}

  Therefore, by definition (and using the linearity and contractive property of $\Re$ and $\Im$ on any C*-algebra):
  \begin{align*}
    \Lip\left(\Re\inner{(\omega_1,\omega_2)}{(\eta_1,\eta_2)}{\module{B}}\right) 
    &\leq \max\left\{
      \begin{array}{l}
        \Lip_1(\Re\inner{\omega_1}{\eta_1}{\D_{1}}) \\
        \Lip_2(\Re\inner{\omega_2}{\eta_2}{\D_{2}}) \\
        2 \CDN((\eta_1,\eta_2)) \CDN((\omega_1,\omega_2))
      \end{array}
    \right\} \\
    &\leq \max\{ H(\CDN_{1}(\omega_1),\CDN_1(\eta_1)), H(\CDN_2(\omega_2),\CDN_2(\eta_2)),\\
    &\quad \quad H(\CDN(\eta_1,\eta_2), \CDN(\omega_1,\omega_2))\}\\
    &\leq H(\CDN(\omega_1,\omega_2),\CDN(\eta_1,\eta_2)) \text{,}
  \end{align*}
and similarly, $\Lip\left(\Im\inner{(\omega_1,\omega_2)}{(\eta_1,\eta_2)}{\module{B}}\right)\leq H(\CDN(\omega_1,\omega_2),\CDN(\eta_1,\eta_2))$.

By construction, $\CDN$ is lower semi-continuous since $\CDN_1$ and $\CDN_2$ are. Moreover, the unit ball of $\CDN$ is a (closed, by lower semi-continuity) subset of the product $\{\omega\in\module{P}:\CDN_1(\omega)\leq 1\}\times\{\omega\in\module{R}:\CDN_2(\omega)\leq 1\}$ of compact sets, and so it is compact.

Thus $\mathds{D}$ is indeed a {\QVB{F}{H}}.

Last, we prove that $(\mathds{D},(\theta_\A,\Theta_\A),(\pi_{\alg{E}},\Pi_{\alg{E}}))$ is a modular tunnel.

Write $\mathds{A} = \left(\module{A},\inner{\cdot}{\cdot}{\module{A}},\CDN_{\module{A}},\A,\Lip_\A\right)$. Let $\omega\in\dom{\CDN_{\module{A}}}$. Since $\tau_1$ is a modular tunnel, there exists $\zeta \in \module{P}$ such that $\Theta_\A(\zeta) = \omega$ and $\CDN_{\module{P}}(\zeta) = \CDN_{\module{A}}(\omega)$. As $\Theta_\B$ is also a quantum isometry, we conclude that $\CDN_{\module{B}}(\Theta_\B(\zeta))$. Since $\tau_2$ is also a tunnel, there exists $\eta\in\mathscr{R}$ such that $\Pi_\B(\eta) = \Theta_\B(\zeta)$ and $\CDN_{\module{B}}(\eta) = \CDN_{\module{M}}(\Theta_\B(\omega))$. Thus $\CDN(\zeta,\eta) = \CDN_{\module{A}}(\omega)$. Thus by construction:
  \begin{equation*}
    \forall \omega \in \dom{\CDN_{\module{A}}} \quad \CDN_{\module{A}}(\omega) = \inf\left\{ \CDN(\xi) : \xi \in \module{B}, \Xi_\A(\xi) = \omega  \right\} \text{.}
  \end{equation*}
This result is symmetric in $\module{R}$ and $\module{P}$.

Therefore, by \cite[Theorem 3.1]{Latremoliere14}, we conclude that indeed $\tau$ is a modular tunnel, whose extent is at most $\tunnelextent{\tau_1} + \tunnelextent{\tau_2} + \varepsilon$.
\end{proof}

 As is customary with our work on the propinquity, we allow for more restrictive choices of the class of modular tunnels involved in the definition of our metric, as long as such restriction meets the following condition.

\begin{definition}\label{appropriate-def}
  Let $\mathcal{Q}$ be a nonempty class of {\QVB{F}{H}s}. A class $\mathcal{T}$ of modular tunnels is \emph{appropriate} for $\mathcal{Q}$ when:
  \begin{enumerate}
    \item if $\tau \in \mathcal{T}$ then $\dom{\tau} , \codom{\tau} \in \mathcal{Q}$,
    \item if $\mathds{A},\mathds{B} \in \mathcal{Q}$ then there exists a modular tunnel $\tau$ in $\mathcal{T}$ from $\mathds{A}$ to $\mathds{B}$,
    \item if $\mathcal{A} = (\module{M},\inner{\cdot}{\cdot}{\module{M}},\CDN,\A,\Lip)$ and $\mathcal{B}$ are in $\mathcal{Q}$ and if there exists a full modular quantum isometry $\Theta : \mathds{A} \rightarrow \mathds{B}$, then the modular tunnel $(\mathds{A},\mathrm{id},\Theta)$ is in $\mathcal{T}$ --- where $\mathrm{id} = (\mathrm{id}_\A,\mathrm{id}_{\module{M}})$ with $\mathrm{id}_\A :\A\rightarrow\A$ is the identity automorphism and $\mathrm{id}_{\module{M}}:\module{M}\rightarrow\module{M}$ is the identity map on $\module{M}$,
    \item if $\tau = (\mathds{D},\Theta,\Pi) \in \mathcal{T}$ then $\tau^{-1} = (\mathds{D}, \Pi, \Theta) \in \mathcal{T}$,
    \item if $\tau_1,\tau_2  \in \mathcal{T}$ with $\codom{\tau_1} = \dom{\tau_2}$ and if $\varepsilon > 0$, then there exists $\tau \in \mathcal{T}$ from $\dom{\tau_1}$ to $\codom{\tau_2}$ such that $\tunnelextent{\tau} \leq \tunnelextent{\tau_1} + \tunnelextent{\tau_2} + \varepsilon$.
  \end{enumerate}
\end{definition}

We record that the most natural choices of classes of tunnels are in fact appropriate for the natural classes of {\gQVB s}:

\begin{proposition}
  The class of all $(F,H)$-modular tunnels is appropriate for the class of all {\QVB{F}{H}s}.
\end{proposition}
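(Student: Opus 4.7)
The plan is to verify in turn each of the five conditions in Definition \ref{appropriate-def}, drawing on the preceding results of this section to cover the nontrivial ones. Most of the work has in fact already been done; the goal is simply to assemble the pieces and note that, since we are taking $\mathcal{T}$ to be the class of \emph{all} $(F,H)$-modular tunnels and $\mathcal{Q}$ to be the class of \emph{all} {\QVB{F}{H}s}, there are no side conditions to worry about.

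Condition (1) is immediate from the definition of a modular tunnel (Definition \ref{modular-tunnel-def}): the domain and codomain of any $(F,H)$-modular tunnel are, by fiat, {\QVB{F}{H}s}. Condition (2), existence, is precisely what Corollary \ref{diam-bound-cor} supplies: given any two {\QVB{F}{H}s} $\mathds{A}$ and $\mathds{B}$, a modular tunnel of extent at most $\max\{2,\diam{\A}{\Lip_\A},\diam{\B}{\Lip_\B}\}$ exists, constructed from the trivial modular bridge with one anchor and one co-anchor. Condition (4) is also immediate: the definition of a modular tunnel is symmetric in its two isometries, so $(\mathds{D},\Pi,\Theta)$ is again an $(F,H)$-modular tunnel whenever $(\mathds{D},\Theta,\Pi)$ is.

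For Condition (3), given a full modular quantum isometry $\Theta = (\theta,\widetilde\Theta) : \mathds{A} \rightarrow \mathds{B}$, I would take the tunnel $(\mathds{A},\mathrm{id}_{\mathds{A}},\Theta)$. The identity is trivially a modular quantum isometry from $\mathds{A}$ onto $\mathds{A}$. As for $\Theta$ itself, it is a Hilbert module isomorphism with $\theta$ a full quantum isometry, and $\CDN_{\module{N}}\circ\widetilde\Theta = \CDN_{\module{M}}$ on $\module{M}$ by hypothesis; since $\widetilde\Theta$ is a bijection, the infimum in Definition \ref{modular-isometry-def}(3) is attained trivially by the unique preimage, so $\Theta$ is in particular a modular quantum isometry. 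Hence $(\mathds{A},\mathrm{id}_{\mathds{A}},\Theta)$ is an $(F,H)$-modular tunnel as required.

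Condition (5), the almost-composition, is the substantive one, and it is exactly the content of Theorem \ref{triangle-thm}: given $\tau_1$ from $\mathds{A}$ to $\mathds{B}$ and $\tau_2$ from $\mathds{B}$ to $\mathds{E}$, and any $\varepsilon > 0$, that theorem produces a modular tunnel $\tau$ from $\mathds{A}$ to $\mathds{E}$ of extent at most $\tunnelextent{\tau_1} + \tunnelextent{\tau_2} + \varepsilon$, and in particular $\tau$ is again an $(F,H)$-modular tunnel and thus lies in $\mathcal{T}$. So this proof is largely a short bookkeeping exercise; the main obstacle, such as it is, is purely notational --- verifying in Condition (3) that our relaxed definition of a full modular quantum isometry (Definition \ref{modular-isometry-def}) really does make the identity-on-one-side tunnel an element of $\mathcal{T}$, which as noted reduces to observing that bijectivity of $\widetilde\Theta$ makes the infimum defining a modular quantum isometry automatic.
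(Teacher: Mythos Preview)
Your proof is correct and follows essentially the same approach as the paper: the paper's proof simply says that all properties of Definition~(\ref{appropriate-def}) are obvious except for Assertion~(5), which is established by Theorem~(\ref{triangle-thm}). You have spelled out the ``obvious'' parts in more detail (invoking Corollary~\ref{diam-bound-cor} for existence and checking Condition~(3) explicitly), but the substance and the key ingredient are the same.
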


\begin{proof}
  All properties of Definition (\ref{appropriate-def}) are obvious except for Assertion (5), which is established by Theorem (\ref{triangle-thm}).
\end{proof}

It is convenient to introduce a simple notation when working with classes of modular tunnels.

\begin{notation}
  Let $\mathcal{C}$ be a nonempty class of {\QVB{F}{H}s} and $\mathcal{T}$ be a class of modular tunnels appropriate for $\mathcal{C}$.

  Let $\mathds{A}$ and $\mathds{B}$ be two {\QVB{F}{H}s} in $\mathcal{C}$. The set of all $(F,H)$-modular tunnels from $\mathds{A}$ to $\mathds{B}$ in $\mathcal{T}$ is denoted by:
  \begin{equation*}
    \tunnelset{\mathds{A}}{\mathds{B}}{\mathcal{T}} \text{.}
  \end{equation*}

  In particular, the class of all $(F,H)$-tunnels from $\mathds{A}$ to $\mathds{B}$, with no restriction that they belong to $\mathcal{T}$, is denoted by:
  \begin{equation*}
    \tunnelset{\mathds{A}}{\mathds{B}}{F,H} \text{.}
  \end{equation*}
\end{notation}

We now define the titular object of this paper: the dual modular propinquity between {\gQVB s}.

\begin{definition}\label{dual-modular-propinquity-def}
  Let $\mathcal{C}$ be a nonempty class of {\QVB{F}{H}s}, for some permissible pair $(F,H)$ of functions, and $\mathcal{T}$ be a class of modular tunnels appropriate for $\mathcal{C}$.

  The $\mathcal{T}$-dual modular Gromov-Hausdorff propinquity $\dmodpropinquity{\mathcal{T}}(\mathds{A},\mathds{B})$ between $\mathds{A},\mathds{B} \in \mathcal{C}$ is the nonnegative number:
  \begin{equation*}
    \dmodpropinquity{\mathcal{T}}(\mathds{A},\mathds{B}) = \inf\left\{ \tunnelextent{\tau} : \tau \in \tunnelset{\mathds{A}}{\mathds{B}}{\mathcal{T}} \right\}
  \end{equation*}
\end{definition}

\begin{notation}
  For any permissible pair $(F,H)$, the dual-modular propinquity $\dmodpropinquity{\mathcal{T}}$, where $\mathcal{T}$ is the class of \emph{all} $(F,H)$-modular tunnels, is simply denoted by $\dmodpropinquity{F,H}$.
\end{notation}

Our work so far ensures that the dual-modular propinquity is a pseudo-metric.

\begin{proposition}\label{pseudo-metric-prop}
  Let $\mathcal{C}$ be a nonempty class of {\QVB{F}{H}s} and $\mathcal{T}$ be a class of modular tunnels appropriate for $\mathcal{C}$.

  For all $\mathds{A},\mathds{B},\mathds{D} \in \mathcal{C}$, we have:
  \begin{enumerate}
    \item $\dmodpropinquity{\mathcal{T}}(\mathds{B},\mathds{A}) = \dmodpropinquity{\mathcal{T}}(\mathds{A},\mathds{B}) < \infty$,
    \item $\dmodpropinquity{\mathcal{T}}(\mathds{A},\mathds{D}) \leq \dmodpropinquity{\mathcal{T}}(\mathds{A},\mathds{B}) + \dmodpropinquity{\mathcal{T}}(\mathds{B},\mathds{D})$,
    \item if there exists a full modular quantum isometry $\Pi : \mathds{A} \rightarrow \mathds{B}$ then $\dmodpropinquity{\mathcal{T}}(\mathds{A},\mathds{B}) = 0$; in particular $\dmodpropinquity{\mathcal{T}}(\mathds{A},\mathds{A}) = 0$.
    \item if $(\A,\Lip_\A)$  is the base space of $\mathds{A}$ and $(\B,\Lip_\B)$ is the base space of $\mathds{B}$, then:
      \begin{equation*}
        \dpropinquity{\mathcal{T'}}((\A,\Lip_\A),(\B,\Lip_\B)) \leq \dmodpropinquity{\mathcal{T}}(\mathds{A},\mathds{B}) \text{,}
      \end{equation*}
      where $\mathcal{T}' = \left\{ \tau_\flat : \tau \in \mathcal{T} \right\}$.
    \end{enumerate}

    Moreover, we also record that for any permissible pair $(F,H)$ and for all {\QVB{F}{H}s} $\mathds{A}$, $\mathds{B}$:
    \begin{enumerate}
      \setcounter{enumi}{4}
    \item $\dmodpropinquity{F,H}(\mathds{A},\mathds{B}) \leq \max\left\{2,\diam{\A}{\Lip_\A},\diam{\B}{\Lip_\B}\right\}$,
    \item $\dmodpropinquity{F,H}(\mathds{A},\mathds{B}) \leq \modpropinquity{F,H}(\mathds{A},\mathds{B})$.
  \end{enumerate}
\end{proposition}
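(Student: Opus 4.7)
The plan is to verify each of the six assertions in turn, using the infrastructure already established in the excerpt: Corollary \ref{diam-bound-cor} for nonemptiness of the tunnel set, Theorem \ref{triangle-thm} for composability, and Corollary \ref{tunnel-from-bridge-cor} for the comparison with the modular propinquity. Nothing new of a conceptual nature should be needed; the proposition essentially packages the formal metric-like consequences of Definition \ref{appropriate-def} and the construction results already proved.

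For the first three assertions, I would proceed as follows. Symmetry is immediate from property (4) of Definition \ref{appropriate-def}: given $\tau \in \tunnelset{\mathds{A}}{\mathds{B}}{\mathcal{T}}$, the reversed tunnel $\tau^{-1}$ lies in $\tunnelset{\mathds{B}}{\mathds{A}}{\mathcal{T}}$ with the same basic tunnel up to swap, hence $\tunnelextent{\tau^{-1}} = \tunnelextent{\tau}$ by the symmetric form of Definition \ref{extent-def}, so the two infima coincide. Finiteness follows from property (2) of Definition \ref{appropriate-def}, giving at least one element in $\tunnelset{\mathds{A}}{\mathds{B}}{\mathcal{T}}$ (note Corollary \ref{diam-bound-cor} gives finiteness for the full class $\dmodpropinquity{F,H}$, which I use for assertion (5) below). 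For (3), given a full modular quantum isometry $\Pi : \mathds{A} \rightarrow \mathds{B}$, property (3) of Definition \ref{appropriate-def} provides the trivial tunnel $(\mathds{A}, \mathrm{id}, \Pi)$ in $\mathcal{T}$, whose basic tunnel has extent $0$ (the state space of $\mathds{A}$ equals itself under $\mathrm{id}$ and is carried isometrically onto the state space of $\mathds{B}$ by $\Pi$). In particular, taking $\Pi = \mathrm{id}$ yields $\dmodpropinquity{\mathcal{T}}(\mathds{A},\mathds{A}) = 0$.

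The triangle inequality (2) is the core use of the composition result. Fix $\varepsilon > 0$ and pick modular tunnels $\tau_1 \in \tunnelset{\mathds{A}}{\mathds{B}}{\mathcal{T}}$ and $\tau_2 \in \tunnelset{\mathds{B}}{\mathds{D}}{\mathcal{T}}$ whose extents approximate $\dmodpropinquity{\mathcal{T}}(\mathds{A},\mathds{B})$ and $\dmodpropinquity{\mathcal{T}}(\mathds{B},\mathds{D})$ to within $\varepsilon/3$. Theorem \ref{triangle-thm} (or more directly, property (5) of Definition \ref{appropriate-def}) yields a modular tunnel $\tau \in \tunnelset{\mathds{A}}{\mathds{D}}{\mathcal{T}}$ with $\tunnelextent{\tau} \leq \tunnelextent{\tau_1} + \tunnelextent{\tau_2} + \varepsilon/3$. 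Taking the infimum over $\tau_1,\tau_2$ and letting $\varepsilon \to 0$ gives the desired inequality. For (4), one simply observes that if $\tau = (\mathds{D},(\theta_\A,\Theta_\A),(\theta_\B,\Theta_\B)) \in \mathcal{T}$, then $\tau_\flat = (\D,\Lip_\D,\theta_\A,\theta_\B) \in \mathcal{T}'$ is a tunnel between the base quantum compact metric spaces with $\tunnelextent{\tau_\flat} = \tunnelextent{\tau}$ by Definition \ref{tunnel-extent-def}. Taking infima preserves the inequality; I should check that $\mathcal{T}'$ as defined is itself appropriate for the base class, but this follows formally from the corresponding properties of $\mathcal{T}$ applied at the level of basic tunnels.

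For (5) and (6), which concern the full propinquity $\dmodpropinquity{F,H}$: assertion (5) is a direct application of Corollary \ref{diam-bound-cor}, which exhibits a modular tunnel of extent at most $\max\{2,\diam{\A}{\Lip_\A},\diam{\B}{\Lip_\B}\}$, so the infimum defining $\dmodpropinquity{F,H}$ is dominated by this number. Assertion (6) requires recalling that $\modpropinquity{F,H}(\mathds{A},\mathds{B})$ is the infimum of the sum of lengths over all finite paths of modular bridges joining $\mathds{A}$ to $\mathds{B}$. Given such a path $(\gamma_i)_{i=1}^n$ of bridges of lengths $(\lambda_i)_{i=1}^n$, I would apply Corollary \ref{tunnel-from-bridge-cor} to each $\gamma_i$ (with some small $\varepsilon_i > 0$) to produce modular tunnels of extents at most $\lambda_i + \varepsilon_i$, then use the already-proven triangle inequality (2) applied to $\dmodpropinquity{F,H}$ itself to bound $\dmodpropinquity{F,H}(\mathds{A},\mathds{B}) \leq \sum_i (\lambda_i + \varepsilon_i)$. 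Letting $\varepsilon_i \to 0$ and taking the infimum over paths yields $\dmodpropinquity{F,H}(\mathds{A},\mathds{B}) \leq \modpropinquity{F,H}(\mathds{A},\mathds{B})$. The only real subtlety anywhere in the argument is the bookkeeping for the small $\varepsilon$ losses in the composition in (2) and in the conversion from bridges to tunnels in (6); these are standard and absorbed by taking successive infima.
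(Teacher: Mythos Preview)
Your proposal is correct and follows essentially the same approach as the paper's proof: properties (1)--(3) are read off from Definition \ref{appropriate-def}, (4) comes from the equality of extents under $\tau\mapsto\tau_\flat$, (5) from Corollary \ref{diam-bound-cor}, and (6) from Corollary \ref{tunnel-from-bridge-cor} combined with the already-established triangle inequality along a modular trek. Your treatment of (6) is slightly more explicit about the path structure than the paper's one-line justification, but the argument is the same.
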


\begin{proof}
  The first four properties listed in this proposition reflect the properties defining an appropriate class from Definition (\ref{appropriate-def}).

  Let $\varepsilon > 0$. There exists a modular tunnel $\tau_1$ from $\mathds{A}$ to $\mathds{B}$ and a tunnel $\tau_2$ from $\mathds{B}$ to $\mathds{E}$ such that:
  \begin{equation*}
    \tunnelextent{\tau_1} \leq \dmodpropinquity{\mathcal{T}}(\mathds{A},\mathds{B}) + \frac{\varepsilon}{3} \text{ and }\tunnelextent{\tau_2} \leq \dmodpropinquity{\mathcal{T}}(\mathds{B},\mathds{D}) + \frac{\varepsilon}{3} \text{.}
  \end{equation*}

  Then, since $\tunnelextent{\tau_1^{-1}} = \tunnelextent{\tau_1}$, we have:
  \begin{equation*}
    \dmodpropinquity{\mathcal{T}}(\mathds{B},\mathds{A}) \leq \tunnelextent{\tau_1^{-1}} = \tunnelextent{\tau_1} \leq \dmodpropinquity{\mathcal{T}}(\mathds{A},\mathds{B}) + \frac{\varepsilon}{3}
  \end{equation*}
and thus $\dmodpropinquity{\mathcal{T}}(\mathds{B},\mathds{A}) \leq \dmodpropinquity{\mathcal{T}}(\mathds{A},\mathds{B})$ as $\varepsilon > 0$ is arbitrary. Thus (1) follows by symmetry.

Similarly, by Definition (\ref{appropriate-def}), there exists $\tau \in \mathcal{T}$ from $\mathds{A}$ to $\mathds{D}$ with extent at most $\tunnelextent{\tau_1} + \tunnelextent{\tau_2} + \frac{\varepsilon}{3}$. Hence:
\begin{align*}
  \dmodpropinquity{\mathcal{T}}(\mathds{A},\mathds{D}) &\leq \tunnelextent{\tau} \\
  &\leq \tunnelextent{\tau_1} + \tunnelextent{\tau_2} + \frac{\varepsilon}{3} \\
  &\leq \dmodpropinquity{\mathcal{T}}(\mathds{A},\mathds{B}) + \frac{\varepsilon}{3} + \dmodpropinquity{\mathcal{T}}(\mathds{B},\mathds{D}) + \frac{\varepsilon}{3} + \frac{\varepsilon}{3} \\
  &\leq \dmodpropinquity{\mathcal{T}}(\mathds{A},\mathds{B}) + \dmodpropinquity{\mathcal{T}}(\mathds{B},\mathds{D}) + \varepsilon\text{.}
\end{align*}

Again, as $\varepsilon > 0$ is arbitrary, it follows that (2) holds.

Now, assume that $(\pi,\Pi)$ is a full modular quantum isometry form $\mathds{A}$ to $\mathds{B}$ and write $\mathds{A} = (\module{M},\inner{\cdot}{\cdot}{\module{M}},\CDN,\A,\Lip)$. Then we note that $\tau = (\mathds{A},(\pi,\Pi),(\mathrm{id}_{\A},\mathrm{id}_{\module{M}})) \in \mathcal{T}$ by Definition (\ref{appropriate-def}), where $\mathrm{id}_E$ is the identity map of any set $E$. Now, $\tau_\flat$ is of the form $(\A,\Lip,\pi,\mathrm{id}_\A)$, which has extent 0 (since $\pi$ is a full quantum isometry). Thus (3) holds.

Further, it is a straightforward check that $\mathcal{T}'$ as defined is indeed a class of tunnels appropriate for the class of all base spaces of {\gQVB s} in $\mathcal{C}$, and since the extent of a modular tunnel is the extent of its base tunnel, (4) holds as well.

Assertion (5) follows from Corollary (\ref{diam-bound-cor}), and thus we obtain the desired bound.

Assertion (6) follows from Corollary (\ref{tunnel-from-bridge-cor}) of Theorem (\ref{tunnel-from-bridge-thm}) since $\dmodpropinquity{\mathcal{T}}$ satisfies the triangle inequality.
\end{proof}

If $\dmodpropinquity{\mathcal{T}}(\mathds{A},\mathds{B}) = 0$ then the base spaces of $\mathds{A}$ and $\mathds{B}$ are fully quantum isometric by \cite{Latremoliere13b} and Assertion (4) of Proposition (\ref{pseudo-metric-prop}). We want to prove that in fact, under this condition, more is true: $\mathds{A}$ and $\mathds{B}$ are fully modular-quantum isometric. To this end, as in \cite{Latremoliere13,Latremoliere13b,Latremoliere16c,Latremoliere18a}, we study the morphism-like properties of modular tunnels. These properties are expressed using target sets, as defined below, which are compact-set valued maps induced by tunnels, somewhat akin to correspondences in metric geometry.

\begin{definition}\label{targetset-def}
  Let $\mathds{A} = (\module{M}, \inner{\cdot}{\cdot}{\module{M}},\CDN_{\module{M}},\A,\Lip_\A)$ and $\mathds{B} = (\module{N},\inner{\cdot}{\cdot}{\module{N}},\CDN_{\module{N}},\B,\Lip_\B)$ be two {\gQVB s}.  Let $\tau$ be a modular tunnel from $\mathds{A}$ to $\mathds{B}$. 

  For any $a\in\dom{\Lip_\A}$ and $l\geq\Lip_\A(a)$, the \emph{$l$-target set} $\targetsettunnel{\tau}{a}{l}$ is $\targetsettunnel{\tau_\flat}{a}{l}$, i.e.:
  \begin{equation*}
    \targetsettunnel{\tau}{a}{l} = \left\{ \pi_\B(d) : d\in\pi_\A^{-1}\left(\left\{d\right\}\right), \Lip_\D(d) \leq l \right\} \text{.}
  \end{equation*}

  For any $\omega\in\dom{\CDN_{\module{M}}}$ and $l \geq \CDN_{\module{M}}(\omega)$, the \emph{$l$-target set} of $\omega$ is the subset of $\module{N}$ defined by:
\begin{equation*}
  \targetsettunnel{\tau}{\omega}{l} = \left\{ \Theta_\B(\zeta) : \zeta \in \Theta_\B^{-1}\left(\left\{\omega\right\}\right), \CDN(\zeta) \leq l \right\} \text{.}
\end{equation*}
\end{definition}

We recall from \cite{Latremoliere13b,Latremoliere14}:

\begin{proposition}\label{targetset-diam-prop}
  If $\tau$ is a tunnel from $(\A,\Lip_\A)$ to $(\B,\Lip_\B)$, and if $a\in\dom{\Lip_\A}$ with $l\geq \Lip_\A(a)$, then for all $b \in \targetsettunnel{\tau}{a}{l}$ then:
  \begin{equation*}
    \norm{b}{\B} \leq \norm{a}{\A} + l \tunnelextent{\tau}\text{ and }\diam{\targetsettunnel{\tau}{a}{l}}{\norm{\cdot}{\B}} \leq 2 l \tunnelextent{\tau}\text{.}
  \end{equation*}
\end{proposition}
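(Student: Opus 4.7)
The plan is to exploit the defining property of the extent---namely, that the Kantorovich distance between lifted states across the tunnel is controlled---together with the isometric embedding $\pi_\A$ to transfer norm estimates from $\A$ to $\B$.

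First, I would unpack the definitions. Write $\tau = (\D,\Lip_\D,\pi_\A,\pi_\B)$. Any $b\in\targetsettunnel{\tau}{a}{l}$ has the form $b=\pi_\B(d)$ with $d\in\sa{\D}$, $\Lip_\D(d)\leq l$, and $\pi_\A(d)=a$. Since $b\in\sa{\B}$, we have $\norm{b}{\B}=\sup\{|\psi(b)|:\psi\in\StateSpace(\B)\}$. Fix $\psi\in\StateSpace(\B)$; then $\psi\circ\pi_\B\in\StateSpace(\D)$, so by Definition (\ref{extent-def}) and the definition of Hausdorff distance there exists $\varphi\in\StateSpace(\A)$ with
\begin{equation*}
  \Kantorovich{\Lip_\D}(\psi\circ\pi_\B,\varphi\circ\pi_\A)\leq\tunnelextent{\tau}\text{.}
\end{equation*}
The key step is then the standard inequality $|\mu(d)-\nu(d)|\leq \Lip_\D(d)\Kantorovich{\Lip_\D}(\mu,\nu)$ applied to $\mu=\psi\circ\pi_\B$ and $\nu=\varphi\circ\pi_\A$, which yields $|\psi(b)-\varphi(a)|\leq l\,\tunnelextent{\tau}$. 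Consequently $|\psi(b)|\leq|\varphi(a)|+l\tunnelextent{\tau}\leq\norm{a}{\A}+l\tunnelextent{\tau}$, and taking the supremum over $\psi$ gives the first claim.

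For the diameter bound, I would take $b_1,b_2\in\targetsettunnel{\tau}{a}{l}$, written as $b_j=\pi_\B(d_j)$ with $\Lip_\D(d_j)\leq l$ and $\pi_\A(d_j)=a$. Then $d:=d_1-d_2\in\sa{\D}$ satisfies $\Lip_\D(d)\leq 2l$ and, crucially, $\pi_\A(d)=0$. For any $\psi\in\StateSpace(\B)$, choose $\varphi\in\StateSpace(\A)$ as before; since $(\varphi\circ\pi_\A)(d)=\varphi(0)=0$, we get
\begin{equation*}
  |\psi(b_1-b_2)|=|(\psi\circ\pi_\B)(d)-(\varphi\circ\pi_\A)(d)|\leq 2l\,\tunnelextent{\tau}\text{.}
\end{equation*}
Again taking the supremum over $\psi$ yields $\norm{b_1-b_2}{\B}\leq 2l\tunnelextent{\tau}$, which proves the second claim.

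There is no real obstacle here; the argument is a direct application of the duality between Lip-norms and the Monge-Kantorovich metric, combined with the Hausdorff-distance definition of extent. The only subtlety worth emphasizing is that $a\in\dom{\Lip_\A}\subseteq\sa{\A}$ forces $d\in\sa{\D}$ and thus $b\in\sa{\B}$, which justifies recovering the C*-norm as a supremum over states.
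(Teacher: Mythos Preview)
Your proof is correct and is essentially the self-contained version of the argument the paper defers to by citation: the paper simply invokes \cite[Proposition 2.12]{Latremoliere14} together with \cite[Proposition 4.4]{Latremoliere13b}, and the underlying argument in those references is precisely the Monge--Kantorovich duality computation you wrote out. Nothing further is needed.
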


\begin{proof}
  We apply \cite[Proposition 2.12]{Latremoliere14} to \cite[Proposition 4.4]{Latremoliere13b}.
\end{proof}

We now can establish the morphism-like properties of the target sets as set-valued functions defined on modules.

\begin{proposition}\label{targetset-morphism-prop}
  Let $\mathds{A} = (\module{M}, \inner{\cdot}{\cdot}{\module{M}},\CDN_{\module{M}},\A,\Lip_\A)$ and $\mathds{B} = (\module{N},\inner{\cdot}{\cdot}{\module{N}},\CDN_{\module{N}},\B,\Lip_\B)$ be two {\gQVB s}.  Let $\tau$ be a modular tunnel from $\mathds{A}$ to $\mathds{B}$. 

  Let $\omega, \omega' \in \module{M}$, $l \geq \max\left\{\CDN_{\module{M}}(\omega),\CDN_{\module{M}}(\omega')\right\}$. If $\eta \in \targetsettunnel{\tau}{\omega}{l}$ and $\eta' \in \targetsettunnel{\tau}{\omega'}{l}$, then:
  \begin{equation*}
    \KantorovichMod{\CDN_{\module{N}}}(\eta,\eta') \leq \sqrt{2} \left( \KantorovichMod{\CDN_{\module{M}}}(\omega,\omega') + H(2 l,  1) \tunnellength{\tau} \right)
  \end{equation*}
  In particular:
  \begin{equation*}
    \diam{\targetsettunnel{\tau}{\omega}{l}}{\KantorovichMod{\CDN_{\module{N}}}} \leq \sqrt{2} \left( H(2 l, 1) \tunnellength{\tau}\right) \text{.}
  \end{equation*}

  We also have for all $t \in \C$:
  \begin{equation*}
    \eta + t \eta' \in \targetsettunnel{\tau}{\omega + t \omega'}{l(1+|t|)} \text{.}
  \end{equation*}

  Last, if $b \in \targetsettunnel{\tau}{\inner{\omega}{\omega}{\module{M}}}{H(l,l)}$ and if $\eta \in \targetsettunnel{\tau}{\omega}{l}$ then:
  \begin{equation*}
    \norm{b - \inner{\eta}{\eta}{\B}}{\B} \leq 2 H(l,l) \tunnelextent{\tau} \text{.}
  \end{equation*}
\end{proposition}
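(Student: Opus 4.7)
The plan is to prove the four assertions sequentially, with the first being the substantive one; the rest will follow quickly. Write $\mathds{D} = (\module{P},\inner{\cdot}{\cdot}{\module{P}},\CDN,\D,\Lip_\D)$ and $\tau = (\mathds{D},(\theta_\A,\Theta_\A),(\theta_\B,\Theta_\B))$. By the very definition of the target sets (Definition \ref{targetset-def}), fix $\zeta,\zeta' \in \module{P}$ witnessing $\eta \in \targetsettunnel{\tau}{\omega}{l}$ and $\eta' \in \targetsettunnel{\tau}{\omega'}{l}$, namely $\Theta_\A(\zeta) = \omega$, $\Theta_\A(\zeta') = \omega'$, $\Theta_\B(\zeta) = \eta$, $\Theta_\B(\zeta') = \eta'$, with $\CDN(\zeta),\CDN(\zeta') \leq l$.

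For the main inequality, fix $\nu \in \module{N}$ with $\CDN_{\module{N}}(\nu) \leq 1$, and use Remark \ref{reached-remark} applied to the modular quantum isometry $(\theta_\B,\Theta_\B)$ to lift $\nu$ to $\xi \in \module{P}$ with $\Theta_\B(\xi) = \nu$ and $\CDN(\xi) \leq 1$. Setting $d := \inner{\zeta - \zeta'}{\xi}{\module{P}} \in \D$, one has $\theta_\B(d) = \inner{\eta-\eta'}{\nu}{\module{N}}$ and $\theta_\A(d) = \inner{\omega-\omega'}{\Theta_\A(\xi)}{\module{M}}$; since $\CDN_{\module{M}}(\Theta_\A(\xi)) \leq \CDN(\xi) \leq 1$, the latter has norm bounded by $\KantorovichMod{\CDN_{\module{M}}}(\omega,\omega')$. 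The inner quasi-Leibniz inequality for $\mathds{D}$ then yields $\Lip_\D(\Re d), \Lip_\D(\Im d) \leq H(\CDN(\zeta - \zeta'),\CDN(\xi)) \leq H(2l,1)$. The next step is the standard norm-transfer across the basic tunnel: for any self-adjoint $r \in \D$, pairing a state of $\B$ with $\theta_\B$ and then approximating the resulting state of $\D$ by some $\psi \circ \theta_\A$ within $\Kantorovich{\Lip_\D}$-distance $\tunnelextent{\tau}$ (which exists by definition of the extent), and invoking the Kantorovich duality $|\mu(r) - \nu(r)| \leq \Lip_\D(r) \Kantorovich{\Lip_\D}(\mu,\nu)$, delivers $\norm{\theta_\B(r)}{\B} \leq \norm{\theta_\A(r)}{\A} + \Lip_\D(r)\tunnelextent{\tau}$. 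Applying this to $r = \Re d$ and $r = \Im d$ and combining via the inequality $\norm{a}{\B} \leq \sqrt{2}\max\{\norm{\Re a}{\B},\norm{\Im a}{\B}\}$ from the opening notation gives the claimed bound on $\norm{\inner{\eta-\eta'}{\nu}{\module{N}}}{\B}$; taking the supremum over $\nu$ finishes assertion one.

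The remaining assertions are straightforward consequences. Assertion two specializes assertion one to $\omega = \omega'$, since $\KantorovichMod{\CDN_{\module{M}}}(\omega,\omega) = 0$. For assertion three, the lifts $\zeta, \zeta'$ above satisfy $\CDN(\zeta + t\zeta') \leq (1+|t|)l$ by subadditivity, while linearity of $\Theta_\A$ and $\Theta_\B$ gives $\Theta_\A(\zeta + t\zeta') = \omega + t\omega'$ and $\Theta_\B(\zeta+t\zeta') = \eta + t\eta'$, so $\zeta + t\zeta'$ is a valid witness. For assertion four, fix a lift $\zeta \in \module{P}$ of $\omega$ with $\CDN(\zeta) \leq l$ and $\Theta_\B(\zeta) = \eta$; then $\inner{\zeta}{\zeta}{\module{P}} \in \D$ is self-adjoint with $\Lip_\D(\inner{\zeta}{\zeta}{\module{P}}) \leq H(l,l)$ by the inner quasi-Leibniz inequality, and its $\theta_\A$- and $\theta_\B$-images are $\inner{\omega}{\omega}{\module{M}}$ and $\inner{\eta}{\eta}{\module{N}}$ respectively; hence $\inner{\eta}{\eta}{\module{N}}$ belongs to $\targetsettunnel{\tau}{\inner{\omega}{\omega}{\module{M}}}{H(l,l)}$, and Proposition \ref{targetset-diam-prop} applied with parameter $H(l,l)$ gives the $\norm{\cdot}{\B}$-diameter bound $2 H(l,l) \tunnelextent{\tau}$, from which the desired inequality follows.

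The hard part will be assertion one, and specifically the clean passage from norms of inner products in $\D$ down to norms of inner products in $\A$ and $\B$. This is exactly where the inner quasi-Leibniz hypothesis on D-norms is doing the essential work: it certifies the Lipschitz-seminorm bound needed to invoke the extent via Kantorovich duality. The $\sqrt{2}$ and the $H(2l,1)$ factors are both forced by this route — the former from the unavoidable split into real and imaginary parts, the latter from the quasi-Leibniz inequality applied with $\CDN(\zeta - \zeta') \leq 2l$.
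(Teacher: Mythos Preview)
Your proof is correct and follows essentially the same approach as the paper's. The only cosmetic difference is that for the norm-transfer step in assertion one you argue directly via Kantorovich duality on states, whereas the paper packages the same computation by noting that $\Re\inner{\eta-\eta'}{\nu}{\module{N}}$ lies in the target set $\targetsettunnel{\tau}{\Re\inner{\omega-\omega'}{\Theta_\A(\xi)}{\module{M}}}{H(2l,1)}$ and then invokes Proposition~\ref{targetset-diam-prop}; the underlying argument is identical.
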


\begin{proof}
  Write $\tau = (\mathds{D},(\theta_\A,\Theta_\A),(\theta_\B,\Theta_\B))$ with $\mathds{D} = (\module{P},\inner{\cdot}{\cdot}{\module{P}},\CDN,\D,\Lip_\D)$.

  Let $\eta\in\targetsettunnel{\tau}{\omega}{l}$ and $\eta'\in\targetsettunnel{\tau}{\omega'}{l}$. Let  $\xi \in \Theta_\A^{-1}(\{\omega\})$ with $\CDN(\xi) \leq l$ such that $\Theta_\B(\xi) = \eta$, and similarly, let  $\xi' \in \Theta_\A^{-1}(\{\omega'\})$ with $\CDN(\xi') \leq l$ such that $\Theta_\B(\xi') = \eta'$.
  
  Let $\mu = \xi - \xi'$, and note $\CDN(\mu) \leq 2l$. We have $\Theta_\A(\mu) = \omega-\omega'$ and $\Theta_\B(\mu) = \eta-\eta'$.

  Let $\chi \in \module{N}$ with $\CDN_{\module{N}}(\chi)\leq 1$. Let $\nu \in \module{P}$ with $\CDN(\nu)\leq 1$ and $\chi = \Theta_\B(\nu)$. Write $\zeta = \Theta_\A(\nu)$. We have:
  \begin{align*}
    \Lip_\D\left(\Re\inner{\mu}{\nu}{}\right) \leq H(\CDN(\mu),\CDN(\nu)) = H(2l,1) \text{.}
  \end{align*}
  Moreover by linearity:
  \begin{equation*}
    \theta_\A\left(\Re\inner{\mu}{\nu}{\D}\right) = \Re\inner{\omega-\omega'}{\zeta}{\A}
  \end{equation*}
  so (noting that $\Lip_\A(\Re\inner{\omega-\omega'}{\zeta}{\A}) \leq H(2l,1)$):
  \begin{equation*}
    \Re\inner{\eta-\eta'}{\chi}{\B} = \theta_\B\left(\Re\inner{\mu}{\nu}{\D}\right) \in \targetsettunnel{\tau}{\Re\inner{\omega-\omega'}{\zeta}{\A}}{H(2l,1)} \text{.}
  \end{equation*}
  We then conclude by Proposition (\ref{targetset-diam-prop}):
  \begin{align*}
    \norm{\Re\inner{\eta-\eta'}{\chi}{\B}}{\B} 
    &\leq \norm{\Re\inner{\omega-\omega'}{\zeta}{\A}}{\A} + H(2l,1) \tunnellength{\tau}\\
    &\leq \norm{\inner{\omega-\omega'}{\zeta}{\A}}{\A} + H(2l,1) \tunnellength{\tau} \\
    &\leq \KantorovichMod{\CDN_{\module{M}}}(\omega,\omega') + H(2l,1) \tunnellength{\tau} \text{.}
  \end{align*}
  
  Now, the same as above applies to conclude that:
  \begin{equation*}
    \norm{\Im\inner{\eta-\eta'}{\chi}{\B}}{\B} \leq \KantorovichMod{\CDN_{\module{M}}}(\omega,\omega') + H(2l,1) \tunnellength{\tau} \text{.}
  \end{equation*}
  
  Hence we conclude:
  \begin{equation*}
    \KantorovichMod{\CDN_{\module{N}}}(\eta,\eta') \leq \sqrt{2}\left(\KantorovichMod{\CDN_{\module{M}}}(\omega,\omega') + H(2l,1) \tunnellength{\tau}\right) \text{.}
  \end{equation*}
  
  In particular, if $\omega=\omega'$, we then have:
  \begin{align*}
    \diam{\targetsettunnel{\tau}{\omega}{l}}{\KantorovichMod{\CDN_{\module{N}}}} 
    &= \sup\left\{ \KantorovichMod{\CDN_{\module{N}}}(\eta,\eta') : \eta,\eta' \in \targetsettunnel{\tau}{\omega}{l} \right\} \\
    &\leq \sqrt{2} H( 2l, 1 ) \tunnellength{\tau} \text{.}
  \end{align*}
  
  We now check the algebraic properties of the target sets. Let $t\in\C$. We note that $\CDN(\xi + t\xi') \leq (1+|t|) l$, while $\Theta_\A(\xi + t\xi') = \omega + t\omega'$ and $\Theta_\B(\xi+t\xi') = \eta + t\eta'$ by linearity. Hence by definition, $\eta + t \eta' \in \targetsettunnel{\tau}{\omega + t\omega'}{l(1+|t|)}$ as desired.

  Last, we observe that $\inner{\omega}{\omega}{\A}\in\sa{\A}$ with $\Lip_\A(\inner{\omega}{\omega}{\A}) \leq H(l,l)$, and $\inner{\eta}{\eta}{\B} \in \sa{\B}$, $\inner{\xi}{\xi}{\D} \in \sa{\D}$, with $\theta_\A(\inner{\xi}{\xi}{\D}) = \inner{\omega}{\omega}{\A}$, $\theta_\B(\inner{\xi}{\xi}{\D}) = \inner{\eta}{\eta}{\A}$, and $\Lip_\D(\inner{\xi}{\xi}{\D}) \leq H(l,l)$, so:
  \begin{equation*}
    \inner{\eta}{\eta}{\B} \in \targetsettunnel{\tau}{\inner{\omega}{\omega}{\A}}{H(l,l)} \text{.}
  \end{equation*}

  Therefore if $b \in \targetsettunnel{\tau}{\inner{\omega}{\omega}{\A}}{H(l,l)}$ then by Proposition (\ref{targetset-diam-prop}):
  \begin{equation*}
    \norm{b - \inner{\eta}{\eta}{\B}}{\B} \leq 2 H(l,l) \tunnellength{\tau} \text{.}
  \end{equation*}

This concludes our proof.
\end{proof}

We also record that target sets are topologically small, i.e. formally, compact, while also not empty.

\begin{proposition}\label{targetset-compact-prop}
Let $\mathds{A} = (\module{M},\inner{\cdot}{\cdot}{\A},\CDN_{\module{M}},\A,\Lip)$ and $\mathds{B}$ be two {\gQVB s} and $\tau$ be a modular tunnel from $\mathds{A}$ to $\mathds{B}$. If $\omega\in\module{M}$ and $l\geq \CDN_{\module{M}}(\omega)$, then $\targetsettunnel{\tau}{\omega}{l}$ is a nonempty compact subset of $\module{N}$.
\end{proposition}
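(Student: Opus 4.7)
The plan is to separate the two claims: non-emptiness will follow immediately from the reachability observation in Remark \ref{reached-remark}, while compactness will follow from the fact that the target set is a continuous image of a closed subset of the compact $D$-ball of $\mathds{D}$.

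Write $\tau = (\mathds{D}, (\theta_\A,\Theta_\A), (\theta_\B,\Theta_\B))$ with $\mathds{D} = (\module{P},\inner{\cdot}{\cdot}{\module{P}},\CDN,\D,\Lip_\D)$. For non-emptiness, the map $(\theta_\A,\Theta_\A) : \mathds{D} \twoheadrightarrow \mathds{A}$ is a modular quantum isometry, so by Remark \ref{reached-remark}, given $\omega \in \dom{\CDN_{\module{M}}}$ with $\CDN_{\module{M}}(\omega) \leq l$, there exists $\zeta \in \module{P}$ such that $\Theta_\A(\zeta) = \omega$ and $\CDN(\zeta) = \CDN_{\module{M}}(\omega) \leq l$. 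Thus $\Theta_\B(\zeta) \in \targetsettunnel{\tau}{\omega}{l}$, so the target set is non-empty.

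For compactness, let $B_l = \{\zeta \in \module{P} : \CDN(\zeta) \leq l\}$. By Definition \ref{metrized-bundle-def}(3)(b) applied to $\frac{1}{l}\CDN$ (or directly by homogeneity of $\CDN$), the set $B_l$ is compact for $\norm{\cdot}{\module{P}}$. Next I observe that any Hilbert module morphism is contractive on the Hilbert norm: indeed, for any $\zeta \in \module{P}$, using that $\theta_\A$ is a unital $\ast$-morphism and therefore a contraction,
\begin{equation*}
  \norm{\Theta_\A(\zeta)}{\module{M}}^2 = \norm{\inner{\Theta_\A(\zeta)}{\Theta_\A(\zeta)}{\module{M}}}{\A} = \norm{\theta_\A(\inner{\zeta}{\zeta}{\module{P}})}{\A} \leq \norm{\inner{\zeta}{\zeta}{\module{P}}}{\D} = \norm{\zeta}{\module{P}}^2\text{.}
\end{equation*}
The same estimate holds for $\Theta_\B$.

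It then follows that $\Theta_\A^{-1}(\{\omega\})$ is norm-closed in $\module{P}$, so the set $S = B_l \cap \Theta_\A^{-1}(\{\omega\})$ is a closed subset of the norm-compact set $B_l$, hence itself norm-compact. Since $\Theta_\B$ is norm-continuous, $\targetsettunnel{\tau}{\omega}{l} = \Theta_\B(S)$ is compact in $\module{N}$. There is no real obstacle here; the only subtlety is the contractivity of Hilbert module morphisms, which is a direct consequence of the base $\ast$-morphism being a contraction combined with the intertwining of inner products.
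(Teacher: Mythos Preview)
Your proof is correct and follows essentially the same approach as the paper's: both invoke Remark \ref{reached-remark} for non-emptiness, then observe that the target set is the image under the continuous map $\Theta_\B$ of the closed subset $\Theta_\A^{-1}(\{\omega\}) \cap \{\zeta : \CDN(\zeta)\leq l\}$ of the norm-compact $\CDN$-ball. The only difference is that you spell out explicitly why Hilbert module morphisms are norm-contractive (hence continuous), a point the paper takes for granted.
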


\begin{proof}
  Write $\tau = (\mathds{D},(\theta_\A,\Theta_\A),(\theta_\B,\Theta_\B))$ with $\mathds{D} = (\module{P},\inner{\cdot}{\cdot}{\module{P}},\CDN,\D,\Lip_\D)$. By definition:
  \begin{equation*}
    \targetsettunnel{\tau}{\omega}{l} = \Theta_\B\left( \Theta_\A^{-1}\left( \{ \omega  \} \right) \cap \left\{ \eta \in \module{P} : \CDN_\D(\eta) \leq l \right\} \right) \text{.}
  \end{equation*}
 By Remark (\ref{reached-remark}), $\Theta_\A^{-1}\left( \{ \omega  \} \right) \cap \left\{ \eta \in \module{P} : \CDN_\D(\eta) \leq l \right\}$ is compact in $\module{P}$ and not empty. As $\Theta_\B$ is continuous, we conclude that indeed $\targetsettunnel{\tau}{\omega}{l}$ is nonempty, compact in $\module{N}$.
\end{proof}

We are now able to prove our main theorem for this section.

\begin{theorem}\label{zero-thm}
  Let $(F,H)$ be a pair of permissible functions. Let $\mathcal{C}$ be a nonempty class of {\QVB{F}{H}s} and let $\mathcal{T}$ be a class of tunnels appropriate for $\mathcal{C}$.

  The dual modular $\mathcal{T}$-propinquity $\dmodpropinquity{\mathcal{T}}$ is a metric, up to full quantum isometry on $\mathcal{C}$.
\end{theorem}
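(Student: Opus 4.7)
The plan is to prove the only remaining nontrivial property, namely that $\dmodpropinquity{\mathcal{T}}(\mathds{A},\mathds{B})=0$ implies the existence of a full modular quantum isometry $(\theta,\Theta)\colon\mathds{A}\to\mathds{B}$, since symmetry, the triangle inequality, and the reverse implication are already recorded in Proposition (\ref{pseudo-metric-prop}). Write $\mathds{A} = (\module{M},\inner{\cdot}{\cdot}{\module{M}},\CDN_{\module{M}},\A,\Lip_\A)$ and $\mathds{B} = (\module{N},\inner{\cdot}{\cdot}{\module{N}},\CDN_{\module{N}},\B,\Lip_\B)$. By assumption, there is a sequence $(\tau_n)_{n\in\N}$ in $\tunnelset{\mathds{A}}{\mathds{B}}{\mathcal{T}}$ with $\tunnelextent{\tau_n}\to 0$. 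By Assertion (4) of Proposition (\ref{pseudo-metric-prop}) and Theorem-Definition (\ref{prop-def}), the base tunnels $(\tau_n)_\flat$ witness $\dpropinquity{F}((\A,\Lip_\A),(\B,\Lip_\B))=0$, so \cite{Latremoliere13b} produces a full quantum isometry $\theta\colon(\A,\Lip_\A)\to(\B,\Lip_\B)$; by the arguments there, $\theta(a)$ can be recovered as the unique limit of any sequence $b_n\in\targetsettunnel{\tau_n}{a}{\Lip_\A(a)}$ for each $a\in\dom{\Lip_\A}$.

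The central construction is the module map $\Theta$. Fix $\omega\in\dom{\CDN_{\module{M}}}$ and $l\geq\CDN_{\module{M}}(\omega)$. By Proposition (\ref{targetset-compact-prop}), each $\targetsettunnel{\tau_n}{\omega}{l}$ is a nonempty compact subset of the norm-compact set $\{\eta\in\module{N}:\CDN_{\module{N}}(\eta)\leq l\}$. By Proposition (\ref{targetset-morphism-prop}), its $\KantorovichMod{\CDN_{\module{N}}}$-diameter is bounded by $\sqrt{2}\,H(2l,1)\tunnelextent{\tau_n}\to 0$. Using the compactness of the $\CDN_{\module{N}}$-unit ball (up to rescaling) together with Proposition (\ref{modular-mongekant-prop}), which ensures that $\KantorovichMod{\CDN_{\module{N}}}$ and $\norm{\cdot}{\module{N}}$ induce the same topology on this ball, I extract for each $\omega$ a norm-convergent subsequence with limit $\Theta(\omega)$. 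A standard diagonal extraction on a countable subset dense in the unit ball of $\CDN_{\module{M}}$, combined with the uniform shrinking of target sets, gives a single subsequence (which I reindex as $(\tau_n)$) along which $\Theta(\omega):=\lim_n \eta_n$ exists for every $\omega\in\dom{\CDN_{\module{M}}}$ and every choice $\eta_n\in\targetsettunnel{\tau_n}{\omega}{\CDN_{\module{M}}(\omega)}$.

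Next, I verify that $(\theta,\Theta)$ has the algebraic structure of a Hilbert module morphism. The additivity and scalar $\C$-homogeneity of $\Theta$ are immediate from the inclusion $\eta+t\eta'\in\targetsettunnel{\tau_n}{\omega+t\omega'}{l(1+|t|)}$ of Proposition (\ref{targetset-morphism-prop}), by passing to the limit. For the inner product compatibility, if $\eta_n\in\targetsettunnel{\tau_n}{\omega}{l}$, then $\inner{\eta_n}{\eta_n}{\module{N}}$ is within $2H(l,l)\tunnelextent{\tau_n}$ in norm of any element of $\targetsettunnel{\tau_n}{\inner{\omega}{\omega}{\module{M}}}{H(l,l)}$, and the latter converges to $\theta(\inner{\omega}{\omega}{\module{M}})$; polarization then yields $\theta(\inner{\omega}{\omega'}{\module{M}})=\inner{\Theta(\omega)}{\Theta(\omega')}{\module{N}}$ on $\dom{\CDN_{\module{M}}}$. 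The left-module compatibility $\Theta(a\omega)=\theta(a)\Theta(\omega)$ is obtained analogously: using the bridge-based tunnels, one shows that if $b_n\to\theta(a)$ and $\eta_n\to\Theta(\omega)$ along appropriately chosen target sets, then $b_n\eta_n$ lies close (up to terms vanishing with $\tunnelextent{\tau_n}$) to $\targetsettunnel{\tau_n}{a\omega}{\cdot}$, hence $\theta(a)\Theta(\omega)=\Theta(a\omega)$. Since $\Theta$ is $\C$-linear and isometric (by $\theta(\inner{\omega}{\omega}{\module{M}})=\inner{\Theta(\omega)}{\Theta(\omega)}{\module{N}}$ combined with $\theta$ being a C*-isomorphism), it extends uniquely to all of $\module{M}$.

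Finally, I handle fullness and the D-norm identity. Running the construction with $\tau_n^{-1}$ (which lies in $\mathcal{T}$ by Definition (\ref{appropriate-def})) and applying the same diagonal extraction produces a map $\Theta'\colon\module{N}\to\module{M}$ that is also a Hilbert module morphism over $\theta^{-1}$. Using that the target sets of $\tau_n$ and $\tau_n^{-1}$ satisfy $\eta\in\targetsettunnel{\tau_n}{\omega}{l}\iff \omega\in\targetsettunnel{\tau_n^{-1}}{\eta}{l}$ and the uniform shrinking of their diameters, passing to the limit shows $\Theta'\circ\Theta=\mathrm{id}_{\module{M}}$ and $\Theta\circ\Theta'=\mathrm{id}_{\module{N}}$, so $(\theta,\Theta)$ is a Hilbert module isomorphism. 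For the D-norm equality, Definition (\ref{modular-tunnel-def}) of a modular tunnel gives $\CDN_{\module{N}}(\eta_n)\leq\CDN_{\module{M}}(\omega)$ for $\eta_n\in\targetsettunnel{\tau_n}{\omega}{\CDN_{\module{M}}(\omega)}$, so lower semi-continuity of $\CDN_{\module{N}}$ yields $\CDN_{\module{N}}(\Theta(\omega))\leq\CDN_{\module{M}}(\omega)$; the reverse inequality follows by the symmetric argument for $\Theta^{-1}=\Theta'$. Thus $(\theta,\Theta)$ is a full modular quantum isometry.

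I expect the main obstacle to be the diagonal extraction step: making rigorous that a single subsequence of tunnels can be chosen along which the limit $\Theta(\omega)$ exists for every $\omega$ simultaneously, and that the resulting $\Theta$ is independent of the choice of elements in each target set. Once this is clean, the algebraic and metric verifications reduce to the target-set estimates already assembled in Propositions (\ref{targetset-diam-prop}), (\ref{targetset-morphism-prop}), and (\ref{targetset-compact-prop}), in parallel with the strategy of \cite{Latremoliere16c}.
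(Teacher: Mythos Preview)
Your overall architecture matches the paper's, but there is a genuine gap in the step where you derive the left-module compatibility $\Theta(a\omega)=\theta(a)\Theta(\omega)$.

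You argue that if $b_n\in\targetsettunnel{\tau_n}{a}{\cdot}$ and $\eta_n\in\targetsettunnel{\tau_n}{\omega}{\cdot}$, then $b_n\eta_n$ lies close to $\targetsettunnel{\tau_n}{a\omega}{\cdot}$. Making this precise requires, inside the tunnel $\mathds{D}_n=(\module{P}_n,\CDN^n,\D_n,\Lip^n)$, an inequality of the form $\CDN^n(d\xi)\leq G(\norm{d}{\D_n},\Lip^n(d),\CDN^n(\xi))$ so that lifts $d$ of $a$ and $\xi$ of $\omega$ yield a lift $d\xi$ of $a\omega$ with controlled D-norm. That is exactly the modular quasi-Leibniz condition, and it is the one condition that this paper has \emph{removed} from Definition (\ref{metrized-bundle-def}) relative to \cite[Definition 3.8]{Latremoliere16c}. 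Proposition (\ref{targetset-morphism-prop}) contains no such module-action statement; the analogous Proposition (\ref{action-prop}) only applies to \emph{metrical} tunnels, where the extra $G$-inequality is reinstated. The paper is explicit that the old route via \cite[Claim 6.19, Proposition 6.7(4)]{Latremoliere16c} is no longer available.

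The paper's fix is indirect. From the inner-product compatibility $\theta(\inner{\omega}{\eta}{\module{M}})=\inner{\Theta(\omega)}{\Theta(\eta)}{\module{N}}$ and the $\A$-linearity of $\inner{\cdot}{\cdot}{\module{M}}$, one computes for every $\zeta\in\module{M}$ that $\inner{\Theta(a\omega)-\theta(a)\Theta(\omega)}{\Theta(\zeta)}{\module{N}}=0$ (Equation (\ref{module-morphism-eq})). This gives the conclusion \emph{provided} $\Theta$ is surjective. Surjectivity is established first: one constructs $\Pi$ from the reversed tunnels $\tau_n^{-1}$ and checks $\Pi\circ\Theta=\mathrm{id}_{\module{M}}$, $\Theta\circ\Pi=\mathrm{id}_{\module{N}}$ using the target-set symmetry you already noted. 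Only then is $\Theta(a\omega)=\theta(a)\Theta(\omega)$ deduced. You should reorder your argument accordingly: postpone the module-morphism claim until after bijectivity, and derive it from (\ref{module-morphism-eq}) rather than from a product-of-target-sets estimate that is not available here.
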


\begin{proof}
  Proposition (\ref{pseudo-metric-prop}) gives us that the dual modular propinquity is a pseudo-metric. We are left to prove that distance zero implies full modular quantum isometry.

  Let $\mathds{A} = (\module{M},\inner{\cdot}{\cdot}{\module{M}},\CDN_{\module{M}},\A,\Lip_\A)$ and $\mathds{B} = (\module{N},\inner{\cdot}{\cdot}{\module{N}},\CDN_{\module{N}},\B,\Lip_\B)$ be two {\QVB{F}{H}s} such that $\dmodpropinquity{\mathcal{T}}(\mathds{A},\mathds{B}) = 0$.

  By Definition (\ref{dual-modular-propinquity-def}), for all $n\in\N$, there exists a modular tunnel $\tau_n$ from $\mathds{A}$ to $\mathds{B}$ such that $\tunnelextent{\tau_n} \leq \frac{1}{n+1}$.

  Our proof follows most of the claims of \cite[Theorem 6.11]{Latremoliere16c}, replacing \cite[Proposition 6.7]{Latremoliere16c} with Proposition (\ref{targetset-morphism-prop}) and \cite[Proposition 6.8]{Latremoliere16c} with Proposition (\ref{targetset-compact-prop}). We however need a small change since we are now missing Assertion (4) of \cite[Proposition 6.7]{Latremoliere16c}. We thus provide below the modified proof of \cite[Theorem 6.11]{Latremoliere16c}, though we refer to \cite{Latremoliere16c} for supplementary details.

  The first step is to appeal to our proof of \cite[Theorem 4.16]{Latremoliere13b}, itself based on \cite[Theorem 5.13]{Latremoliere13}, which established the desired coincidence property for the dual propinquity, upon which the dual-modular propinquity is built --- with the immediate modification that we use here the extent instead of the length of tunnels (which are equivalent by \cite[Proposition 2.12,Theorem 3.8]{Latremoliere14}), and thus we replace target sets for journeys by the usual target sets for tunnels as per Definition (\ref{targetset-def}). Thus, we recall from \cite{Latremoliere13b} that there exists a strictly increasing function $f : \N \rightarrow \N$ and a full quantum isometry $\theta : (\A,\Lip_\A) \rightarrow (\B,\Lip_\B)$ such that, for all $a\in\dom{\Lip_\A}$ and $l\geq \Lip_\A(a)$, and for all $b \in \dom{\Lip_\B}$ and $l' \geq \Lip_\B(b)$:
  \begin{equation*}
    \lim_{n\rightarrow\infty} \Haus{\norm{\cdot}{\B}}\left(\targetsettunnel{\tau_{f(n)}}{a}{l}, \{\theta(a)\}\right) = 0
  \end{equation*}
  and
  \begin{equation*}
    \lim_{n\rightarrow\infty} \Haus{\norm{\cdot}{\A}}\left(\targetsettunnel{\tau_{f(n)}^{-1}}{b}{l'}, \{\theta^{-1}(b)\}\right) = 0 \text{.}
  \end{equation*}

  We now turn to working with the modules. First, let $\omega \in \dom{\CDN_{\module{M}}}$, and let $l\geq \CDN_{\module{M}}(\omega)$. Let $g : \N \rightarrow \N$ be any strictly increasing function. By Proposition (\ref{targetset-compact-prop}), the sequence $(\targetsettunnel{\tau_{f(g(n))}}{\omega}{l})_{n\in\N}$ is a sequence of nonempty compact subsets of the $\KantorovichMod{\CDN_{\module{N}}}$-compact set $\mathcal{B}_l = \CDN_{\module{N}}^{-1}[0,l]$. Hence, there exists a subsequence $(\targetsettunnel{\tau_{f(g(h(n)))}}{\omega}{l})_{n\in\N}$ converging to some nonempty set $\mathcal{L}$ for the Hausdorff distance $\Haus{\KantorovichMod{\CDN_{\module{N}}}}$ --- since the space of closed subsets of a compact metric space is compact for the induced Hausdorff distance. By Proposition (\ref{targetset-morphism-prop}), the diameter of $\mathcal{L}$ is the limit of $(\diam{\targetsettunnel{\tau_{f(g(h(n)))}}{\omega}{l}}{\KantorovichMod{\CDN_{\module{N}}}})_{n\in\N}$, which is dominated by $\lim_{n\rightarrow\infty} \sqrt{2} H(2l, 1) \tunnellength{\tau_n} = 0$, so $\mathcal{L}$ is a singleton.

  We then observe that since $\KantorovichMod{\CDN_{\module{N}}}$ and $\norm{\cdot}{\module{N}}$ induce the same topology on the unit ball of $\CDN_{\module{N}}$, and therefore on all bounded subsets of $\dom{\CDN_{\module{N}}}$ for $\CDN_{\module{N}}$ by Proposition (\ref{modular-mongekant-prop}), so do $\Haus{\KantorovichMod{\CDN_{\module{N}}}}$ and $\Haus{\norm{\cdot}{\module{N}}}$ induce the same topology on the space of closed subset of any closed ball for $\CDN_{\module{N}}$ (see for instance \cite[Lemma 6.9]{Latremoliere16c}). Since $(\targetsettunnel{\tau_{f(g(h(n)))}}{\omega}{l})_{n\in\N}$ lies inside $\mathcal{B}_l = \{\eta\in\module{N}:\CDN_{\module{N}}(\eta)\leq l\}$, we can conclude that $(\targetsettunnel{\tau_{f(g(h(n)))}}{\omega}{l})_{n\in\N}$ converges to the singleton $\mathcal{L}$ for $\Haus{\norm{\cdot}{\module{N}}}$ as well.

  As $\{\omega\in\module{M}:\CDN(\omega)\leq 1\}$ is compact for the norm $\norm{\cdot}{\module{M}}$, we conclude that it is separable. Therefore, so is $\dom{\CDN_{\module{M}}} = \bigcup_{N\in\N} N\cdot\{\omega\in\dom{\CDN_{\module{M}}}:\CDN_{\module{M}}(\omega)\leq 1\}$. Let $\{\omega^n : n\in\N \} \subseteq \dom{\CDN_{\module{M}}}$ be a $\norm{\cdot}{\module{M}}$-dense subset of $\dom{\CDN_{\module{M}}}$. We can then use a diagonal argument (similar to, for instance, \cite[Claim 6.14]{Latremoliere16c}) to conclude that there exists a strictly increasing function $g : \N \rightarrow \N$ such that for all $k\in\N$, the sequence $\left(\targetsettunnel{\tau_{f(g(n))}}{\omega^k}{\CDN_{\module{M}}(\omega^k)}\right)_{n\in\N}$ converges, for $\Haus{\KantorovichMod{\CDN_{\module{N}}}}$, to a singleton which we denote $\{\Theta(\omega^k)\}$. By \cite[Claim 6.13]{Latremoliere16c}, we then conclude that for all $k\in\N$ and $l\geq\CDN_{\module{M}}(\omega^k)$, the sequence $\left(\targetsettunnel{\tau_{f(g(n))}}{\omega^k}{l} \right)_{n\in\N}$ still converges to $\{\Theta(\omega^k)\}$ for $\Haus{\KantorovichMod{\CDN_{\module{N}}}}$ and therefore, as discussed before, for $\Haus{\norm{\cdot}{\module{N}}}$.

  We can now prove that, in fact, for all $\omega \in \dom{\CDN_{\module{M}}}$ and for all $l\geq\CDN_{\module{M}}(\omega)$, the sequence $\left(\targetsettunnel{\tau_{f(g(n))}}{\omega}{l}\right)_{n\in\N}$ converges to a singleton for $\Haus{\norm{\cdot}{\module{N}}}$. Indeed, let $\varepsilon > 0$. There exists, by density, $k\in\N$ such that:
  \begin{equation*}
    \KantorovichMod{\CDN_{\module{M}}}(\omega,\omega_k) \leq \norm{\omega-\omega_k}{\module{M}} < \frac{\varepsilon}{6\sqrt{2}}\text{.}
  \end{equation*}
  There also exists $N\in\N$ such that for all $n\geq N$, we have $\tunnellength{\tau_{f(g(n))}}\leq\frac{\varepsilon}{6H(2l,1)\sqrt{2}}$. By Proposition (\ref{targetset-morphism-prop}), we then note that for all $n\in\N$:
  \begin{align*}
    \Haus{\KantorovichMod{\CDN_{\module{N}}}}(\targetsettunnel{\tau_{f(g(p))}}{\omega^k}{l}, \targetsettunnel{\tau_{f(g(p))}}{\omega}{l}) 
      &\leq \sqrt{2}\left(\KantorovichMod{\module{M}}(\omega,\omega^k) + H(2l,1) \tunnellength{\tau_{f(g(n))}}\right)\\
      &\leq \sqrt{2} \left(\frac{\varepsilon}{6\sqrt{2}} + \frac{\varepsilon}{6\sqrt{2}}\right) \leq \frac{\varepsilon}{3} \text{.}
  \end{align*}

    Therefore, for all $p,q \geq N$, we conclude:
    \begin{equation*}
      \Haus{\KantorovichMod{\CDN_{\module{N}}}}\left(\targetsettunnel{\tau_{f(g(p))}}{\omega}{l},\targetsettunnel{\tau_{f(g(q))}}{\omega}{l}\right) \leq \frac{2\varepsilon}{3} + \Haus{\KantorovichMod{\CDN_{\module{N}}}}\left(\targetsettunnel{\tau_{f(g(p))}}{\omega^k}{l},\targetsettunnel{\tau_{f(g(q))}}{\omega^k}{l}\right) \text{.}
    \end{equation*}

    Now, the sequence $\left(\targetsettunnel{\tau_{f(g(n))}}{\omega^k}{l}\right)_{n\in\N}$ is convergent, hence Cauchy, for $\KantorovichMod{\CDN_{\module{N}}}$. Hence, there exists $N_2 \in \N$ such that if $p,q \geq N_2$ then:
    \begin{equation*}
      \Haus{\KantorovichMod{\CDN_{\module{N}}}}\left(\targetsettunnel{\tau_{f(g(p))}}{\omega^k}{l},\targetsettunnel{\tau_{f(g(q))}}{\omega^k}{l}\right) < \frac{\varepsilon}{3} \text{.}
    \end{equation*}
    Thus, for all $p,q\geq \max\{N,N_2\}$, we conclude:
    \begin{equation*}
      \Haus{\KantorovichMod{\CDN_{\module{N}}}}\left(\targetsettunnel{\tau_{f(g(p))}}{\omega}{l},\targetsettunnel{\tau_{f(g(q))}}{\omega}{l}\right) \leq \varepsilon \text{.}
    \end{equation*}
    Now, $\left\{ \eta \in \module{N} : \CDN_{\module{N}}(\eta)\leq l\right\}$ is compact, therefore complete, for $\KantorovichMod{\CDN_{\module{N}}}$ by Proposition (\ref{modular-mongekant-prop}). Therefore, $\Haus{\KantorovichMod{\CDN_{\module{N}}}}$ is complete on the collection of nonempty closed subsets of $\left\{ \eta \in \module{N} : \CDN_{\module{N}}(\eta)\leq l\right\}$. As a Cauchy sequence in that space, $\left(\targetsettunnel{\tau_{f(g(n))}}{\omega}{l}\right)_{n\in\N}$ therefore converges for $\Haus{\KantorovichMod{\CDN_{\module{N}}}}$ since the Hausdorff distance induced by a complete metric is itself complete. By Proposition (\ref{targetset-morphism-prop}), we conclude again that its limit is a singleton which we denote by $\{\Theta(\omega)\}$. By \cite[Claim 6.13]{Latremoliere16c}, we conclude that $\left(\targetsettunnel{\tau_{f(g(n))}}{\omega}{l'}\right)_{n\in\N}$ converges to the same singleton for all $l'  \geq \CDN_{\module{M}}(\omega)$. By topological equivalence again, we conclude that $\left(\targetsettunnel{\tau_{f(g(n))}}{\omega}{l'}\right)_{n\in\N}$ converges for $\Haus{\norm{\cdot}{\module{N}}}$ to $\{\Theta(\omega)\}$ for all $l'\geq\CDN_{\module{N}}(\omega)$.

  For any $\omega\in\dom{\CDN_{\module{M}}}$ and $l\geq\CDN_{\module{N}}(\omega)$, the element $\Theta(\omega)$ is the limit of any sequence of the form $(\eta_n)_{n\in\N}$ with $\eta_n \in \targetsettunnel{\tau_{f(n)}}{\omega}{l}$ for all $n\in\N$ by \cite[Lemma 6.10]{Latremoliere16c}. This has several consequences. First, as $\CDN_{\module{N}}$ is lower semi-continuous, we conclude that $\CDN_{\module{N}}(\Theta(\omega)) \leq \CDN_{\module{M}}(\omega)$. 

Second, let $\omega,\omega'\in\dom{\CDN_{\module{M}}}$, $t\in \C$ and $l\geq\max\{\CDN_{\module{N}}(\omega),\CDN_{\module{N}}(\omega')\}$. Then for all $n\in\N$, let $\eta_n \in \targetsettunnel{\tau_{f(g(n))}}{\omega}{l}$ and $\eta'_n \in \targetsettunnel{\tau_{f(g(n))}}{\omega'}{l}$. While $(\eta_n)_{n\in\N}$ converges to $\Theta(\omega)$ and $(\eta'_n)_{n\in\N}$ converges to $\Theta(\omega')$, Proposition (\ref{targetset-morphism-prop}) shows that $\eta_n + t\eta'_n \in \targetsettunnel{\tau_{f(g(n))}}{\omega+t\omega'}{(1+|t|)l}$, so $(\eta_n + \eta'_n)_{n\in\N}$ converges to $\Theta(\omega+t\omega')$. By uniqueness of the limit, we conclude that $\Theta(\omega)+t\Theta(\omega') = \Theta(\omega+t\omega')$. Thus $\Theta$ is $\C$-linear on $\dom{\CDN_{\module{M}}}$.

Third, for each $n\in\N$, let $\eta_n \in \targetsettunnel{\tau_{f(g(n))}}{\omega}{l}$ and $b_n \in \targetsettunnel{\tau_{f(g(n))}}{\inner{\omega}{\omega}{\module{M}}}{H(l,l)}$ for all $n\in\N$. While $(b_n)_{n\in\N}$ converges to $\theta(\inner{\omega}{\omega}{\module{M}})$ and $(\eta_n)_{n\in\N}$ converges to $\Theta(\omega)$, so that by continuity $\left(\inner{\eta_n}{\eta_n}{\module{N}}\right)_{n\in\N}$ converges to $\inner{\Theta(\omega)}{\Theta(\omega)}{\module{N}}$, by Proposition (\ref{targetset-morphism-prop}), we have $\lim_{n\rightarrow\infty}\norm{b_n - \inner{\eta_n}{\eta_n}{\module{N}}}{\B} = 0$. Therefore, again by uniqueness of the limit, $\theta(\inner{\omega}{\omega}{\module{M}}) = \inner{\Theta(\omega)}{\Theta(\omega)}{\module{N}}$. Using the standard polarization identity, we thus obtain:
\begin{equation*}
  \forall \omega,\eta\in \dom{\CDN_{\module{M}}} \quad \theta\left(\inner{\omega}{\eta}{\module{M}}\right) = \inner{\Theta(\omega)}{\Theta(\eta)}{\module{N}} \text{.}
\end{equation*}

Moreover, we conclude from the identity $\theta(\inner{\omega}{\omega}{\module{M}}) = \inner{\Theta(\omega)}{\Theta(\omega)}{\module{N}}$ that $\norm{\Theta(\omega)}{\module{N}} = \norm{\omega}{\module{M}}$ since $\theta$, as a *-automorphism, is an isometry from $\norm{\cdot}{\A}$ to  $\norm{\cdot}{\B}$. Therefore, $\Theta$, as it is linear, is an isometry, and in particular, is uniformly continuous on $\dom{\CDN_{\module{M}}}$ from $\norm{\cdot}{\module{M}}$ to $\norm{\cdot}{\module{N}}$. It therefore admits a unique linear, continuous extension to $\module{M}$, which we continue to denote by $\Theta$. By continuity of $\Theta$, $\theta$ and the inner products, we then easily verify that:
\begin{equation*}
  \forall \omega,\eta \in \module{M} \quad \theta(\inner{\omega}{\eta}{\module{M}}) = \inner{\Theta(\omega)}{\Theta(\eta)}{\module{N}} \text{.}
\end{equation*}

We observe that unlike \cite[Claim 6.19]{Latremoliere16c}, we have not yet proven that $(\theta,\Theta)$ is a module morphism. We will see later on that this holds true, but in quite a different way from our previous work, as it does not rely on Assertion (4) of \cite[Proposition 6.7]{Latremoliere16c} which we do not have any longer. For now, let us observe the following --- note that his is the only point in this proof where we use the fact that we work with an $\A$-inner product $\inner{\cdot}{\cdot}{\A}$.

Let $\omega\in\module{M}$ and $a\in\A$. Using what we have proven so far, in particular, that $\Theta$ preserves the inner product up to $\theta$, and that $\theta$ is a *-automorphism, we see that for all $\zeta\in\module{M}$:
\begin{equation}\label{module-morphism-eq}
  \begin{split}
    \inner{\Theta(a\omega) - \theta(a)\Theta(\omega)}{\Theta(\zeta)}{\module{N}} 
    &= \inner{\Theta(a\omega)}{\Theta(\zeta)}{\module{N}} - \theta(a)\inner{\Theta(\omega)}{\Theta(\zeta)}{\module{N}}\\
    &= \theta\left(\inner{a\omega}{\zeta}{\module{M}}\right) - \theta(a)\theta\left(\inner{\omega}{\zeta}{\module{M}}\right) \\
    &= \theta\left(\inner{a\omega}{\zeta}{\module{M}}\right) - \theta(a\inner{\omega}{\zeta}{\module{M}}) \\
    &= \theta(\inner{a\omega}{\zeta}{\module{M}} -\inner{a\omega}{\zeta}{\module{M}}) = 0 \text{.}
  \end{split}
\end{equation}

Thus, it would be sufficient to show that $\Theta$ is onto to conclude that $(\theta,\Theta)$ is a module morphism.

  We now in fact prove that $\Theta$ is invertible. The entire work we have done so far can be done as well with the sequence of modular tunnels $(\tau_{f(g(n))}^{-1})_{n\in\N}$. Thus, we would prove that there exists a $\C$-linear map $\Pi : \module{N} \rightarrow \module{M}$ such that for all $\omega,\eta\in\module{N}$:
  \begin{equation*}
    \pi^{-1}\left(\inner{\omega}{\eta}{\module{N}}\right) = \inner{\Pi(\omega)}{\Pi(\eta)}{\module{N}}
  \end{equation*}
  noting in particular that $\Pi$ is an isometry, and such that there exists a strictly increasing function $h : \N \rightarrow \N$ such that for all $\omega\in\dom{\CDN_{\module{N}}}$ and $l\geq\CDN_{\module{N}}(\omega)$:
  \begin{equation*}
    \lim_{n\rightarrow\infty} \Haus{\KantorovichMod{\CDN_\module{M}}}\left(\{\Pi(\omega)\}, \targetsettunnel{\tau_{f(g(h(n)))}^{-1}}{\omega}{l} \right) = 0 \text{,}
  \end{equation*}
   while $\CDN_{\module{M}}\circ\Pi \leq \CDN_{\module{N}}$.

Our goal is to check that $\Pi$ is the inverse of $\Theta$. Let $\omega\in\dom{\CDN_{\module{M}}}$ with $\omega\not=0$ and $l = \CDN_{\module{M}}(\omega)$.

Let $\varepsilon > 0$. There exists $N\in\N$ such that for all $n\geq  N$:
\begin{equation*}
  \Haus{\KantorovichMod{\CDN_{\module{N}}}}\left(\{\Theta(\omega)\}, \targetsettunnel{\tau_{f(g(h(n)))}}{\omega}{l} \right) \leq \frac{\varepsilon}{3\sqrt{2}}
\end{equation*}
and
\begin{equation*}
  \Haus{\KantorovichMod{\CDN_{\module{M}}}}\left(\{\Pi(\Theta(\omega))\}, \targetsettunnel{\tau_{f(g(h(n)))}^{-1}}{\Theta(\omega)}{l} \right) \leq \frac{\varepsilon}{3}
\end{equation*}
while $\tunnelextent{\tau_{f(g(h(n)))}} \leq \frac{\varepsilon}{3 \sqrt{2} H(2l,1)}$.

Let $\zeta \in \targetsettunnel{\tau_{f(g(h(n)))}^{-1}}{\Theta(\omega)}{l}$. Let $\eta\in\targetsettunnel{\tau_{f(g(h(n)))}}{\omega}{l}$. Note that by Definition (\ref{targetset-def}), we also have $\omega\in\targetsettunnel{\tau_{f(g(h(n)))}^{-1}}{\eta}{l}$.  This is the key reason behind the desired result. 

Using Proposition (\ref{targetset-morphism-prop}), we then estimate:
\begin{align*}
  \KantorovichMod{\CDN_{\module{M}}}(\Pi(\Theta(\omega)) , \omega)
  &\leq \KantorovichMod{\CDN_{\module{M}}}(\Pi(\Theta(\omega)) , \zeta) + \KantorovichMod{\CDN_{\module{M}}}(\zeta,\omega) \\
  &\leq \frac{\varepsilon}{3} + \sqrt{2}\left( \KantorovichMod{\module{N}}(\Theta(\omega) , \eta) + H(2l,1)\tunnelextent{\tau_{f(g(h(n)))}^{-1}} \right) \\
  &\leq \frac{\varepsilon}{3} + \sqrt{2}\left( \frac{\varepsilon}{3\sqrt{2}} + \frac{\varepsilon}{3\sqrt{2}} \right) \\
  &\leq \varepsilon \text{.}
\end{align*}
As $\varepsilon > 0$ is arbitrary, we conclude that $\norm{\Pi(\Theta(\omega)) - \omega}{\module{M}} = 0$, i.e. $\Pi(\Theta(\omega)) = \omega$ for all $\omega\in\dom{\CDN_{\module{M}}}$ (since $\Pi(\Theta(0)) = 0$ by linearity). By continuity, we conclude that $\Pi\circ\Theta$ is the identity on $\module{M}$. We would prove similarly that $\Theta\circ\Pi$ is the identity on $\module{N}$.

In particular, since $\Theta$ is onto, we then conclude from Equality (\ref{module-morphism-eq})  that $\Theta(a\omega) = \theta(a)\Theta(\omega)$ for all $a\in\A$ and $\omega\in\module{M}$.

Moreover, for all $\omega\in\dom{\CDN_{\module{M}}}$:
\begin{equation*}
  \CDN_{\module{N}}(\Theta(\omega)) \leq  \CDN_{\module{M}}(\omega) = \CDN_{\module{M}}(\Pi(\Theta(\omega))) \leq \CDN_{\module{N}}(\Theta(\omega)) \text{,}
\end{equation*}
so $\CDN_{\module{N}}(\Theta(\omega)) = \CDN_{\module{N}}(\omega)$.

Thus $(\theta,\Theta)$ is indeed a full modular quantum isometry. This concludes our proof.
\end{proof}

We conclude this section with some applications of our work so far.

\begin{example}
By Assertion (6) of Proposition (\ref{pseudo-metric-prop}), together with the conclusion of Theorem (\ref{zero-thm}), we note that the modular propinquity $\modpropinquity{F,H}$ is a metric up to full modular quantum isometry, thus slightly improving on \cite[Theorem 6.11]{Latremoliere16c}.
\end{example}

\begin{example}
Proposition (\ref{pseudo-metric-prop}) allows us to conclude that the conclusion of \cite{Latremoliere17c,Latremoliere18a} hold for the dual modular propinquity: therefore, Heisenberg modules form continuous families of {\gQVB s} for the dual-modular propinquity.
\end{example}

There is, by \cite[Example 3.15]{Latremoliere16c}, for any $p \in \N\setminus\{0\}$, a natural function $\qvba{\cdot}$ from {\Qqcms{F}s} to {\QVB{F}{H}}, where $H:x,y\mapsto 8 d F(x,y,x,y)$, defined by setting, for any {\Qqcms{F}} $(\B,\Lip_\B)$ for all $b_1,\ldots,b_p, c_1,\ldots,c_p \in \B$:
\begin{equation*}
  \CDN_\B^p (b_1,\ldots,b_n) = \max\left\{\norm{b_j}{\B}, \Lip_\B(\Re b_j),\Lip_\B(\Im b_j) : j \in J\right\}
\end{equation*}
and
\begin{equation*}
\inner{(b_1,\ldots.b_p)}{(c_1,\ldots,c_p)}{\B}^p = \sum_{j=1}^p b_j c_j^\ast \text{,}
\end{equation*}
and then $\qvba{\B,\Lip_\B} = (\B^p, \inner{\cdot}{\cdot}{\B}^p,\CDN_\B^p,\B,\Lip_\B)$. 

Proposition (\ref{pseudo-metric-prop}) and \cite[Theorem 7.2]{Latremoliere16c} then implies the function $\qvba{\cdot}$ is continuous from the class of {\Qqcms{F}s} endowed with the quantum propinquity, to the class of {\QVB{F}{H}s} endowed with the dual modular propinquity. We may then naturally ask about convergence of free modules under the weaker condition of convergence of their base quantum space for the dual propinquity. 

A first idea would be to start from a tunnel $(\D,\Lip,\pi,\rho)$ with domain $(\A,\Lip_\A)$ and $p\in\N\setminus\{0\}$, and then try to make a modular tunnel out of $\qvba{\D,\Lip}$. The natural surjections from $\D^p$ to $\A^p$ is given by $\pi^p : (d_1,\ldots,d_p)\in\D^p \mapsto (\pi(d_1),\ldots,\pi(d_p))$, but we encounter a difficulty: if $a\in\dom{\Lip_\A}$, then we  can find $d\in\sa{\D}$ with $\Lip_\D(d) = \Lip_\A(a)$ and $\pi(d) = a$. However, the best available estimate on $\norm{d}{\D}$ is that $\norm{d}{\D} \leq \norm{a}{\A} + 2 \Lip_\A(a) \tunnellength{\tau}$, and thus we can not expect $\pi^p$ to be a modular quantum isometry from $\CDN^p$ to $\CDN^p$. We thus propose a modified version of this construction, borrowing heavily form \cite[Section 7]{Latremoliere13} though adapted to the dual propinquity. This technique may prove helpful in more general situations (for the dual propinquity or the dual modular propinquity) when natural morphisms are only ``almost'' quantum isometries.

\begin{example}
  Let $(\A,\Lip_\A)$ and $(\B,\Lip_\B)$ be two {\Qqcms{F}s}. Let $(\D,\Lip_\D,\pi_\A,\pi_\B)$ be an $F$-tunnel from $(\A,\Lip_\A)$ to $(\B,\Lip_\B)$ and let $p \in N\setminus\{0\}$. Let $H:x,y\geq 0 \mapsto \max\{ 8 p F(x,y,x,y), 2 p x^2 y^2 \}$. We define $\module{P} = \A^p\oplus\D^p\oplus\B^p$, seen as a Hilbert $\alg{E}$-module where $\alg{E} = \A\oplus\D\oplus\B$ in the obvious way:
  \begin{equation*}
    \forall (a,d,b)\in\alg{E}, (\omega,\eta,\zeta)\in\module{P} \quad (a,d,b)\cdot(\omega,\eta,\zeta) = (a\omega,d\eta,b\zeta)
  \end{equation*}
  and for all $\omega_1,\omega_2\in\A^p, \eta_1,\eta_2\in\D^p, \zeta_1,\zeta_2\in\B^p$:
  \begin{equation*}
    \inner{(\omega_1,\eta_1,\zeta_1)}{(\omega_2,\eta_2,\zeta_2)}{\module{P}} = \left(\inner{\omega_1}{\omega_2}{\A}^p, \inner{\eta_1}{\eta_2}{\D}^p, \inner{\zeta_1}{\zeta_2}{\B}^p\right) \text{.}
  \end{equation*}
  We set $\gamma = \sqrt{1 + 4 p F(1+2\lambda,1+2\lambda,1,1)\lambda}$ where $\lambda=\tunnelextent{\tau}$. For all $(\omega,\eta,\zeta)\in\module{P}$, we define:
  \begin{equation*}
    \CDN(\omega,\eta,\zeta) = \max\left\{\CDN_\A^p(\omega),\CDN_\D^p(\eta),\CDN_\B^p(\zeta), \frac{\gamma}{\gamma - 1}\norm{\omega-\pi_\A^p(\eta)}{\A}^p, \frac{\gamma}{\gamma-1}\norm{\eta-\pi_\B^p(\zeta)}{\B}^p \right\} \text{,}
  \end{equation*}
  and for all $(a,d,b) \in \alg{E}$, we define:
  \begin{equation*}
    \Lip(a,d,b) = \max\left\{\Lip_\A(a), \Lip_\B(b), \Lip_\D(d), \frac{\gamma}{\gamma-1}\norm{a-\pi_\A(d)}{\A}, \frac{\gamma}{\gamma-1}\norm{b-\pi_\B(d)}{\B} \right\}\text{.}
  \end{equation*}
  Standard arguments similar to the one in \cite[Theorem 3.11]{Latremoliere14} show that $\Lip$ thus defined on $\alg{E}$ is a $F$-quasi Leibniz L-seminorm on $\alg{E}$. Similar argument to the proof of Theorem (\ref{triangle-thm}) prove that $\CDN$ is a D-norm, once we perform a direct computation to show that $\Lip(\inner{\omega}{\eta}{\module{P}}) \leq H(\CDN(\omega),\CDN(\eta))$ for all $\omega,\eta\in\module{P}$. Thus $(\module{P},\inner{\cdot}{\cdot}{\module{P}},\CDN,\alg{E},\Lip)$ is a {\QVB{F}{H}}.

  We then define $\Theta_\A : (\omega,\eta,\zeta)\in\module{P} \mapsto \omega \in \A^p$ and $\Theta_\B : (\omega,\eta,\zeta)\in\module{P} \mapsto \zeta \in \B^p$. We also define $\theta_\A : (a,d,b)\in\alg{E}\mapsto a\in\A$ and $\theta_\B : (a,d,b)\in\alg{E}\mapsto b\in\B$. A direct computation shows that $(\theta_\A,\Theta_\A)$ and $(\theta_\B,\Theta_\B)$ are Hilbert module morphisms.

  Let $a\in\sa{\A}$. Since $\tau$ is a tunnel, there exists $d\in\sa{\D}$ such that $\Lip_\D(d) = \Lip_\A(a)$ and $\pi_\A(d) = a$. It is immediate that $e = (a,d,\pi_\B(d)) \in \alg{E}$ satisfies $\theta_\A(e) = a$ and $\Lip(e) = \Lip_\A(a)$. Therefore, $\theta_\A$ is a quantum isometry from $(\alg{E},\Lip)$ to $(\A,\Lip_\A)$. Similarly, $\theta_\B$ is a quantum isometry from $(\alg{E},\Lip)$ to $(\B,\Lip_\B)$.

  Let now $(a_1,\ldots,a_p) \in \A^p$. For each $j\in\{1,\ldots,p\}$, we define $d_j = e_j + i f_j$ where $e_j, f_j \in \sa{\D}$ such that $\Lip(e_j) = \Lip(\Re a_j)$, $\Lip(f_j) = \Lip(\Im a_j)$, and $\pi_\A(e_j) = \Re a_j$, $\pi_\A(f_j) = \Im a_j$ --- which exists since $\tau$ is a tunnel. We also note that $\norm{e_j^2}{\D} \leq \norm{(\Re a_j)^2}{\A} + 2 F(1+2\lambda,1+2\lambda,1,1) \lambda$. 

If $\varphi \in \StateSpace(\D)$ then there exists $\psi \in \StateSpace(\D)$ such that $\Kantorovich{\Lip_\D}(\varphi,\psi) \leq \lambda$, and thus:
  \begin{align*}
    \varphi\left(\sum_{j=1}^p d_j d_j^\ast\right)
    &= \sum_{j=1}^p \varphi\left(\Re(d_j)^2\right) + \sum_{j=1}^p\varphi\left(\Im(d_j)^2\right) + 2\sum_{j=1}^p \varphi\left(\Lie{\Re d_j}{\Im d_j}\right)\\
    &\leq \sum_{j=1}^p\left( \psi\left((\Re(a_j))^2\right) + \psi\left((\Im(a_j))^2\right) + 2 \psi\left(\Lie{\Re a_j}{\Im a_j}\right) \right) + 4 p \beta \lambda \\
    &= \psi\left(\sum_{j=1}^p a_j a_j^\ast\right) + 4 p \beta \lambda \leq \gamma^2 \text{.}
  \end{align*}
  Therefore, $\norm{(d_j)_{j=1}^p}{\D^p} \leq \gamma$ since $\varphi$ was arbitrary in $\StateSpace(\D)$. Hence it follows that $\norm{(\pi_\B(d_j))_{j=1}^p}{\B^p} \leq \gamma$. Altogether, $\CDN((a_j)_{j=1}^p, (\frac{1}{\gamma} d_j)_{j=1}^p, (\frac{1}{\gamma} b_j)_{j=1}^p) \leq 1$ as desired. Hence, $\Theta_\A$ is a modular quantum isometry.  The same holds for $\Theta_\B$, so $(\module{P},\inner{\cdot}{\cdot}{\module{P}},\CDN,\alg{E},\Lip)$ is an $(F,H)$-modular tunnel. We can then compute, using techniques already encountered, that the extend of $\tau_p$ is no more than:
\begin{equation*}
  \frac{2(\gamma-1)}{\gamma} + \lambda = \frac{2\sqrt{1+4 p F(1+2\lambda,1+2\lambda,1,1) \lambda} - 1}{\sqrt{1+4 p F(1+2\lambda,1+2\lambda,1,1) \lambda}} + \lambda\text{.} 
\end{equation*}

Therefore, by continuity of $F$, and if we now set $\lambda = \dpropinquity{F}((\A,\Lip_\A),(\B,\Lip_B))$, then:
\begin{equation*}
  \dmodpropinquity{F,H}(\qvba{\A,\Lip_\A},\qvba{\B,\Lip_\B}) \leq \frac{2\sqrt{1+4 p F(1+2\lambda,1+2\lambda,1,1) \lambda} - 1}{\sqrt{1+4 p F(1+2\lambda,1+2\lambda,1,1) \lambda}} + \lambda\text{.}
\end{equation*}

In particular, $\qvba{\cdot}$ is continuous form the class of {\Qqcms{F}s} endowed with the modular $F$-propinquity to the class of {\QVB{F}{H}s} endowed with the dual modular $(F,H)$-propinquity.
\end{example}

\section{The Dual Modular Propinquity for {\gMVB s}}

We now explain how to extend the propinquity to {\gMVB s}. Our strategy follows the pattern we detailed above about {\gQVB s}, and thus begin with a notion of a metrical tunnel. A metrical tunnel consists of a modular tunnel and a tunnel between {\qcms s}, paired together in a natural manner.

\begin{notation}
  If $\mathds{M} = (\module{M},\inner{\cdot}{\cdot}{\module{M}},\CDN,\A,\Lip_\A,\B,\Lip_\B)$ is a {\MVB{F}{G}{H}}, then $\mathds{M}_\flat = (\module{M},\inner{\cdot}{\cdot}{\module{M}},\CDN,\A,\Lip_\A)$ is a {\QVB{F}{H}}. We set $\mathrm{alt}(\mathds{M}) = (\B,\Lip_\B)$.
\end{notation}

\begin{definition}\label{metrical-tunnel-def}
  Let $\mathds{A}^j = (\module{M}_j,\inner{\cdot}{\cdot}{\module{M}_j},\CDN_{\module{M}_j},\A^j,\Lip^j,\B^j,\Lip_j)$ for $j\in\{1,2\}$.

  A \emph{metrical tunnel} $(\tau,\tau')$ from $\mathds{A}^1$ to $\mathds{A}^2$ is given by:
   \begin{enumerate}
     \item an $(F,H)$-modular tunnel $\tau = (\mathds{D},(\theta_1,\Theta_1),(\theta_2,\Theta_2))$ from $\mathds{A}^1_\flat$ to $\mathds{A}^2_\flat$, where we write $\mathds{D} = (\module{P},\inner{\cdot}{\cdot}{\module{P}},\CDN,\D,\Lip_\D)$,
     \item an $F$-tunnel $\tau' = (\D',\Lip_\D',\pi^1,\pi^2)$ from $\mathrm{alt}(\mathds{A}^1)$ to $\mathrm{alt}(\mathds{A}^2)$,
     \item $\module{P}$ is also a $\D'$-left module,
     \item $\forall d\in \sa{\D'} \quad \forall \xi \in \module{P} \quad \CDN(d\xi) \leq G(\norm{d}{\D'},\Lip_\D'(d),\CDN(\xi))$,
     \item for all $j\in\{1,2\}$, the pair $(\pi^j, \Theta^j)$ is a left module morphism from the left $\D'$-module $\module{P}$ to the left $\A^j$-module $\module{M}_j$.
   \end{enumerate}
\end{definition}

In particular, in Definition (\ref{metrical-tunnel-def}), the tuple $(\module{P},\CDN,\D,\Lip_\D,\D',\Lip_\D')$ is a {\MVB{F}{G}{H}}. 

\begin{definition}
  The \emph{extent} of a metrical tunnel $(\tau,\tau')$ is $\max\left\{\tunnelextent{\tau},\tunnelextent{\tau'}\right\}$.
\end{definition}

\begin{figure}[t]
\begin{equation*}
  \xymatrix{
    & & (\D',\Lip'_\D) \ar@{.>}[d] \ar@/_/@{>>}[llddd]_{\pi_\A}  \ar@/^/@{>>}[rrddd]^{\pi_\B}  & & \\
    & & (\module{P},\CDN_{\module{P}}) \ar@/_/@{>>}[ldd]_{\Theta_\A} \ar@/^/@{>>}[rdd]^{\Theta_\B} & & \\
     & & (\D,\Lip_\D) \ar@{>>}[ldd]_{\theta_\A} \ar@{>>}[rdd]^{\theta_\B} \ar@{-->}[u] &  \\
    (\A',\Lip'_\A) \ar@{.>}[r] & (\module{M},\CDN_{\module{M}}) & & (\module{N},\CDN_{\module{N}}) & (\B',\Lip'_\B) \ar@{.>}[l] \\
    & (\A,\Lip_\A) \ar@{-->}[u] & & (\B,\Lip_\B) \ar@{-->}[u]
    }
\end{equation*}
\caption{A metrical modular tunnel}
\end{figure}
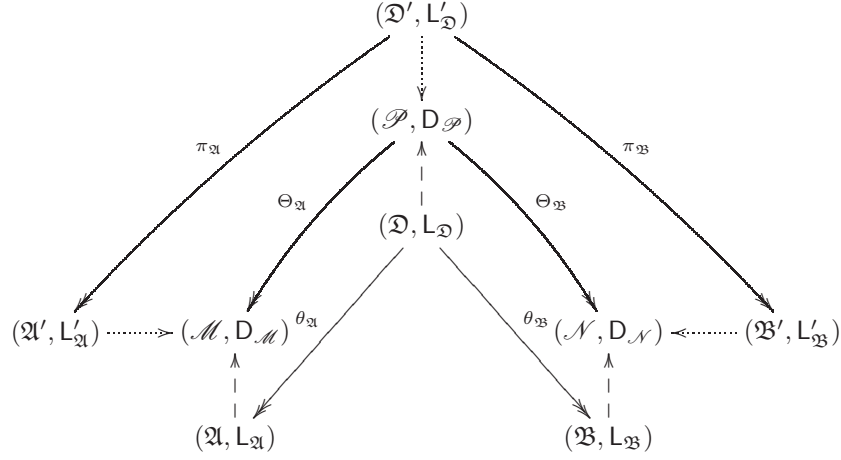

We can extend the proof of Theorem (\ref{triangle-thm}) to metrical tunnels.

\begin{proposition}
  Let $\mathds{A}$, $\mathds{B}$ and $\mathds{E}$ be three {\MVB{F}{G}{H}s} and let $(\tau_1,\upsilon_1)$ and $(\tau_2,\upsilon_2)$ be two $(F,G,H)$-metrical tunnels, respectively from $\mathds{A}$ to $\mathds{B}$ and $\mathds{B}$ to $\mathds{E}$. If $\varepsilon > 0$ then there exists a $(F,G,H)$-metrical tunnel from $\mathds{A}$ to $\mathds{E}$ whose extent is no more than $\tunnelextent{\tau_1} + \tunnelextent{\tau_2} + \varepsilon$.
\end{proposition}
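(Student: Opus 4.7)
The plan is to combine Theorem \ref{triangle-thm} for the modular-tunnel pieces with the parallel almost-composition of ordinary $F$-tunnels between {\Qqcms{F}s} from \cite{Latremoliere14} for the alt pieces, then endow the resulting direct-sum module with a componentwise action of the direct-sum alt algebra and verify the resulting inner quasi-Leibniz condition of Definition \ref{metrical-tunnel-def}. Writing $\tau_j$ and $\upsilon_j$ for the modular and $F$-tunnel parts of the $j$-th metrical tunnel, I first feed $\tau_1,\tau_2$ with threshold $\varepsilon$ into Theorem \ref{triangle-thm} to obtain a modular tunnel $\tau$ from $\mathds{A}_\flat$ to $\mathds{E}_\flat$ whose underlying Hilbert module is $\module{P}_1\oplus\module{P}_2$ over $\D=\D_1\oplus\D_2$, equipped with the D-norm $\CDN$ and L-seminorm $\Lip_\D$ constructed in that theorem, and extent at most $\tunnelextent{\tau_1}+\tunnelextent{\tau_2}+\varepsilon$. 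The analogous almost-composition for $F$-tunnels on {\Qqcms{F}s} produces an $F$-tunnel $\upsilon$ from $\mathrm{alt}(\mathds{A})$ to $\mathrm{alt}(\mathds{E})$ with base algebra $\D'=\D'_1\oplus\D'_2$ and L-seminorm $\Lip_{\D'}(d'_1,d'_2)=\max\{\Lip'_1(d'_1),\Lip'_2(d'_2),\frac{1}{\varepsilon}\norm{\pi^1_\B(d'_1)-\pi^2_\B(d'_2)}{\mathrm{alt}(\mathds{B})}\}$, of extent at most $\tunnelextent{\upsilon_1}+\tunnelextent{\upsilon_2}+\varepsilon$.

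Next I equip $\module{P}_1\oplus\module{P}_2$ with the componentwise left $\D'$-action $(d'_1,d'_2)\cdot(\omega_1,\omega_2)=(d'_1\omega_1,d'_2\omega_2)$. The coordinate projections constructed in Theorem \ref{triangle-thm}, paired with the coordinate projections of $\D'$ composed with $\pi^1_\A$ and $\pi^2_\E$, then tautologically intertwine this action with the given left actions on the underlying modules of $\mathds{A}$ and $\mathds{E}$, so the module-morphism clauses of Definition \ref{metrical-tunnel-def} are immediate. The substantive step is the inner quasi-Leibniz inequality $\CDN(d'\xi)\leq G(\norm{d'}{\D'},\Lip_{\D'}(d'),\CDN(\xi))$ for $d'=(d'_1,d'_2)$ and $\xi=(\omega_1,\omega_2)$. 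The first two maxima defining $\CDN(d'\xi)$ are controlled by the metricality of each original metrical tunnel together with monotonicity of $G$. For the hybrid term $\frac{1}{\varepsilon}\norm{\Theta_\B(d'_1\omega_1)-\Pi_\B(d'_2\omega_2)}{\module{M}}$, I use that both $(\pi^1_\B,\Theta_\B)$ and $(\pi^2_\B,\Pi_\B)$ are module morphisms to write this difference as $\pi^1_\B(d'_1)(\Theta_\B(\omega_1)-\Pi_\B(\omega_2))+(\pi^1_\B(d'_1)-\pi^2_\B(d'_2))\Pi_\B(\omega_2)$, then bound its norm, using contractivity of the representation of $\mathrm{alt}(\mathds{B})$ on the underlying module of $\mathds{B}_\flat$, by $\norm{d'_1}{\D'_1}\,\varepsilon\CDN(\xi)+\varepsilon\Lip_{\D'}(d')\,\norm{\omega_2}{\module{P}_2}$. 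Dividing by $\varepsilon$ and using $\norm{\omega_2}{\module{P}_2}\leq\CDN(\xi)$ bounds the hybrid term by $(\norm{d'}{\D'}+\Lip_{\D'}(d'))\CDN(\xi)\leq G(\norm{d'}{\D'},\Lip_{\D'}(d'),\CDN(\xi))$ by permissibility of $G$.

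With this quasi-Leibniz bound established, the tuple $(\tau,\upsilon)$ with the componentwise action satisfies every clause of Definition \ref{metrical-tunnel-def}, and its extent $\max\{\tunnelextent{\tau},\tunnelextent{\upsilon}\}$ is dominated by the sum of the extents of $(\tau_1,\upsilon_1)$ and $(\tau_2,\upsilon_2)$ plus $\varepsilon$, which is the claimed bound. The only real obstacle I anticipate is the hybrid add-and-subtract estimate above, where the coupling between the module tunnel and the algebra tunnel enters non-trivially through the contractivity of the $\mathrm{alt}(\mathds{B})$-representation; every other step is a direct transcription of Theorem \ref{triangle-thm} or of its analogue in \cite{Latremoliere14}.
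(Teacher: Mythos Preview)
Your proposal is correct and follows essentially the same route as the paper: apply Theorem \ref{triangle-thm} to $\tau_1,\tau_2$ and the analogous construction from \cite{Latremoliere14} to $\upsilon_1,\upsilon_2$, endow the direct sum with the componentwise $\D'$-action, and verify the modular quasi-Leibniz inequality via exactly the add-and-subtract estimate you describe on the hybrid term. The paper's computation of that term is the same as yours (its display has minor typos, but the intended bound is precisely $\norm{d'_1}{}\CDN(\xi)+\Lip_{\D'}(d')\norm{\omega_2}{}\leq G(\norm{d'}{},\Lip_{\D'}(d'),\CDN(\xi))$), and the rest is, as you say, a direct transcription.
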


\begin{proof}
  We apply Theorem (\ref{triangle-thm}) to the modular tunnels $\tau_1$ and $\tau_2$ to obtain a modular tunnel $\tau$ from $\mathds{A}_\flat$ to $\mathds{E}_\flat$ of extent at most $\tunnelextent{\tau_1} + \tunnelextent{\tau_2} + \varepsilon$.

  We then apply \cite[Theorem 3.1]{Latremoliere14} to the tunnels $\upsilon_1$ and $\upsilon_2$ to obtain a tunnel $\upsilon$ from $\mathrm{alt}(\mathds{A})$ to $\mathrm{alt}(\mathrm{B})$ with $\tunnelextent{\upsilon}\leq\tunnelextent{\tau_1} + \tunnelextent{\tau_2} + \varepsilon$. 

Writing $\upsilon_1 = (\D'_1,\mathsf{Q}_1,\alpha,\rho)$, $\upsilon_2 = (\D'_2,\mathsf{Q}_2,\mu,\beta)$ and $\upsilon = (\D,\mathsf{Q},.,.)$, we then observe that the modular quasi-Leibniz relation holds: if $d_1\in\D'_1,d_2\in\D'_2,(\omega,\eta)\in\module{B}$ then:
  \begin{align*}
    \CDN(d_1\omega,d_2\eta) 
    &= \max\left\{\CDN_1(d_1\omega_1),\CDN_2(d_2\eta), \frac{1}{\varepsilon}\norm{\Theta_\B(\omega)-\Pi_\B(\eta)}{\module{M}} \right\} \\
    &\leq \max\left\{\begin{array}{l}
                       G(\norm{d_1}{\D_1'},\mathsf{Q}_1(d_1),\CDN_1(\omega)),\\
                       G(\norm{d_2}{\D_2'},\mathsf{Q}_2(d_2),\CDN_2(\eta)),\\
                       \frac{\norm{d_1}{\D_1} \norm{\Theta_\B(\omega)-\Pi_\B(\eta)}{} + \norm{\rho(d_1)-\mu(d_2)}{} \norm{\eta}{}}{\varepsilon}
                       \end{array} \right\}\\
      &\leq G(\norm{(d_1',d_2')}{\D'},\mathsf{Q}(d_1,d_2), \CDN(\omega,\eta) ) \text{.}
  \end{align*}

  It is then straightforward to check that $(\tau,\upsilon)$ is the desired metrical tunnel.
\end{proof}

We now define the metric analogue of the dual modular propinquity.

\begin{definition}
  Let $\mathcal{C}$ be a nonempty class of {\MVB{F}{G}{H}s}. A class $\mathcal{T}$ of $(F,G,H)$-tunnels is \emph{appropriate} if:
  \begin{enumerate}
    \item $\left\{ \tau : \exists \upsilon \quad (\tau,\upsilon) \in \mathcal{T} \right\}$ is appropriate for $\{\mathds{E}_\flat:\mathds{E}\in\mathcal{C}\}$,
    \item $\left\{ \upsilon : \exists \tau \quad (\tau,\upsilon)\in\mathcal{T} \right\}$ is appropriate for $\{\mathrm{alt}(\mathrm{E}): \mathrm{E}\in\mathcal{C}\}$.
  \end{enumerate}
\end{definition}

We immediately check that the class of all $(F,G,H)$-metrical tunnels is indeed appropriate for the class of all {\MVB{F}{G}{H}s}. If $\mathds{A}$ and $\mathds{B}$ is a pair of {\MVB{F}{G}{H}s} and $\mathcal{T}$ is a collection of $(F,G,H)$-metrical tunnels, we once again denote the class of all tunnels from $\mathds{A}$ to $\mathds{B}$ in $\mathcal{T}$ by $\tunnelset{\mathds{A}}{\mathds{B}}{\mathcal{T}}$.

\begin{definition}\label{metrical-propinquity-def}
  Let $\mathcal{C}$ be a nonempty class of {\MVB{F}{G}{H}s} and $\mathcal{T}$ a class of tunnels appropriate for $\mathcal{C}$. The \emph{dual-metrical $\mathcal{T}$-propinquity} $\dmetpropinquity{\mathcal{T}}$ is defined for any $\mathds{A},\mathds{B} \in \mathcal{C}$ by:
  \begin{equation*}
    \dmetpropinquity{\mathcal{T}}\left(\mathds{A},\mathds{B}\right) = \inf\left\{ \tunnelextent{\tau} : \tau\in\tunnelset{\mathds{A}}{\mathds{B}}{\mathcal{T}} \right\} \text{.}
  \end{equation*}
\end{definition}

The dual metrical propinquity is a metric up to full metrical quantum isometry. To prove this, we make one observation.

\begin{proposition}\label{action-prop}
  Let $(\tau,\tau')$ be a metrical tunnel between two {\MVB{F}{G}{H}s} $\mathds{A}_1$ and $\mathds{A}_2$, where we will use the notations of Definition (\ref{metrical-tunnel-def}). Let $\omega\in\module{M}_1$ and $a\in\sa{\B_1}$. If $\eta\in\targetsettunnel{\tau}{\omega}{l}$ and $b \in \targetsettunnel{\tau'}{a}{l'}$ then:
  \begin{equation*}
    b \eta \in \targetsettunnel{\tau}{a\omega}{G(\norm{a}{\A} + 2 l \tunnelextent{\tau'},l',l)} \text{.}
  \end{equation*}
\end{proposition}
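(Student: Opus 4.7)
The plan is to lift both $\eta$ and $b$ to the central modules of the respective tunnels and then use the module action in the central $\gMVB$ to construct an explicit preimage of $a\omega$.

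First, I would unfold the definitions. Writing the tunnels as in Definition (\ref{metrical-tunnel-def}), since $\eta \in \targetsettunnel{\tau}{\omega}{l}$ by Definition (\ref{targetset-def}) there exists $\zeta \in \module{P}$ with $\Theta_1(\zeta) = \omega$, $\Theta_2(\zeta) = \eta$, and $\CDN(\zeta) \leq l$. Similarly, since $b \in \targetsettunnel{\tau'}{a}{l'}$, there exists $d \in \D'$ with $\pi^1(d) = a$, $\pi^2(d) = b$, and $\Lip'_\D(d) \leq l'$. The natural candidate is then $d\zeta \in \module{P}$.

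The second step is to check the morphism properties. By condition (5) of Definition (\ref{metrical-tunnel-def}), the pairs $(\pi^j,\Theta_j)$ are left-module morphisms, so
\begin{equation*}
\Theta_1(d\zeta) = \pi^1(d)\Theta_1(\zeta) = a\omega \text{ and } \Theta_2(d\zeta) = \pi^2(d)\Theta_2(\zeta) = b\eta \text{.}
\end{equation*}
The modular quasi-Leibniz condition (4) of Definition (\ref{metrical-tunnel-def}) then yields
\begin{equation*}
\CDN(d\zeta) \leq G\!\left(\norm{d}{\D'}, \Lip'_\D(d), \CDN(\zeta)\right) \leq G\!\left(\norm{d}{\D'}, l', l\right)\text{,}
\end{equation*}
using the monotonicity of $G$.

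The remaining and only mildly delicate step is to bound $\norm{d}{\D'}$ in terms of $\norm{a}{\B_1}$ and $\tunnelextent{\tau'}$. Since $\pi^1(d)=a$ and $\Lip'_\D(d)\leq l'$, we have $d\in\targetsettunnel{(\tau')^{-1}_{\text{swap}}}{a}{l'}$ in the appropriate sense, or more directly, we can apply Proposition (\ref{targetset-diam-prop}) componentwise to $\Re d, \Im d$: each satisfies $\Lip'_\D \leq l'$ and projects under $\pi^1$ to $\Re a = a$, $\Im a = 0$ respectively, giving $\norm{\Re d}{\D'} \leq \norm{a}{\B_1} + l'\tunnelextent{\tau'}$ and $\norm{\Im d}{\D'} \leq l'\tunnelextent{\tau'}$, and hence $\norm{d}{\D'} \leq \norm{a}{\B_1} + 2 l'\tunnelextent{\tau'}$ (up to the usual $\sqrt{2}$ factors in the convention for $\Lip'_\D$ on non-self-adjoint elements, which is absorbed into the monotonicity of $G$). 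Plugging this into the quasi-Leibniz bound and invoking the definition of the target set for $\tau$, we conclude that
\begin{equation*}
b\eta = \Theta_2(d\zeta) \in \targetsettunnel{\tau}{a\omega}{G(\norm{a}{\B_1}+2l'\tunnelextent{\tau'}, l', l)} \text{,}
\end{equation*}
which matches the asserted bound (interpreting the "$l$" and "$\A$" in the statement as a mild notational conflation with $l'$ and $\B_1$).

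The only step with any subtlety is the norm estimate on $d$; the rest is a direct application of the morphism and quasi-Leibniz hypotheses packaged into Definition (\ref{metrical-tunnel-def}). This explains why the dual metrical propinquity will transport the module action continuously, which will be needed in the coincidence-at-zero proof for $\dmetpropinquity{\mathcal{T}}$.
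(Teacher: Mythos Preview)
Your proof is correct and follows the paper's approach exactly: lift $b$ and $\eta$ to $d\in\D'$ and $\zeta\in\module{P}$, form $d\zeta$, apply the module-morphism property (5) of Definition~(\ref{metrical-tunnel-def}) to identify $\Theta_1(d\zeta)=a\omega$ and $\Theta_2(d\zeta)=b\eta$, and bound $\CDN(d\zeta)$ via the modular quasi-Leibniz inequality (4).

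Two small clarifications on your ``mildly delicate step''. First, since $a\in\sa{\B_1}$ and the target set $\targetsettunnel{\tau'}{a}{l'}$ is by definition built from lifts $d$ with $\Lip'_\D(d)\leq l'$, and $\dom{\Lip'_\D}\subseteq\sa{\D'}$, the witness $d$ is automatically self-adjoint; the paper simply takes $d\in\sa{\D'}$ from the outset, so your real/imaginary decomposition and the $\sqrt{2}$ concern are unnecessary. Second, Proposition~(\ref{targetset-diam-prop}) bounds the norm of elements in the \emph{target} (i.e.\ in the codomain $\B_2$), not the norm of the lift $d\in\D'$; the estimate you need, $\norm{d}{\D'}\leq\norm{a}{\B_1}+l'\tunnelextent{\tau'}$, follows instead directly from the definition of extent: for any $\varphi\in\StateSpace(\D')$ choose $\psi\in\StateSpace(\B_1)$ with $\Kantorovich{\Lip'_\D}(\varphi,\psi\circ\pi^1)\leq\tunnelextent{\tau'}$ and estimate $|\varphi(d)|\leq|\varphi(d)-\psi(a)|+|\psi(a)|$. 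Your identification of the $l$ versus $l'$ and $\A$ versus $\B_1$ typos in the statement is correct.
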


\begin{proof}
  Let $d\in \sa{\D'}$ and $\xi \in \module{P}$ such that $\pi^1(d) = a$, $\pi^2(d) = b$, and $\Lip_\D'(d) \leq l$, while $\Theta_1(\xi) = \omega$, $\Theta_2(\xi) = \eta$ and $\CDN_{\module{P}}(\eta) \leq l$.
  We have:
  \begin{equation*}
    \Theta_1(d \xi) = \pi^1(d)\Theta_1(\xi) = a\omega \text{ while }\Theta_2(d \xi) = b \eta\text{.} 
  \end{equation*}

  Moreover, by Definition (\ref{metrical-bundle-def}):
    \begin{equation*}
      \CDN_{\module{P}}(d\xi) \leq G(\norm{d}{\D'}, l, l) \leq G(\norm{a}{\A} + 2 l \tunnelextent{\tau'}{},l ,l) \text{.}
    \end{equation*}

    This concludes our proof by Definition (\ref{targetset-def}).
\end{proof}

\begin{definition}
  Let $\mathds{A}$ and $\mathds{B}$ be two {\gMVB s}. A \emph{full metrical quantum isometry} $(\theta,\Theta,\pi)$ is given by:
  \begin{enumerate}
    \item a full modular quantum isometry $(\theta,\Theta)$ from $\mathds{A}_\flat$ to $\mathds{B}_\flat$,
    \item a full quantum isometry $\pi$ from $\mathrm{alt}(\mathds{A})$ to $\mathrm{alt}(\mathds{B})$ such that $(\pi,\Theta)$ is a module morphism.
  \end{enumerate}
\end{definition}

\begin{theorem}
  The dual  metrical propinquity is a metric up to full metrical quantum isometry.
\end{theorem}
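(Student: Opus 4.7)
The plan is to mirror the structure of the proof of Theorem (\ref{zero-thm}) for the dual modular propinquity, while handling the extra layer of data coming from the $\mathrm{alt}$-factor. First, I would verify that $\dmetpropinquity{\mathcal{T}}$ is a pseudo-metric which vanishes on equivalence classes under full metrical quantum isometry, by a word-for-word transcription of Proposition (\ref{pseudo-metric-prop}): symmetry follows from $(\tau,\tau')^{-1}=(\tau^{-1},\tau'^{-1})$, the triangle inequality from the almost-composition proposition just proved for metrical tunnels, and the vanishing on full metrical quantum isometries from Definition (\ref{appropriate-def}) applied to both coordinates. Finiteness follows by combining Corollary (\ref{diam-bound-cor}) for the modular part and \cite[Proposition 4.6]{Latremoliere13} for the base tunnel.

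For the coincidence property, assume $\dmetpropinquity{\mathcal{T}}(\mathds{A}^1,\mathds{A}^2)=0$ and choose a sequence $(\tau_n,\tau_n')_{n\in\N}$ of metrical tunnels with $\tunnelextent{(\tau_n,\tau_n')}\leq\frac{1}{n+1}$. Since $\tunnelextent{\tau_n}\to 0$, applying Theorem (\ref{zero-thm}) to the modular components yields, after passing to a subsequence $f(n)$, a full modular quantum isometry $(\theta,\Theta)$ from $\mathds{A}^1_\flat$ to $\mathds{A}^2_\flat$ such that for every $\omega\in\dom{\CDN_{\module{M}_1}}$ and $l\geq\CDN_{\module{M}_1}(\omega)$ the target sets $\targetsettunnel{\tau_{f(n)}}{\omega}{l}$ converge to $\{\Theta(\omega)\}$ in Hausdorff norm. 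Simultaneously, applying \cite{Latremoliere13b} to the base tunnels $\tau_{f(n)}'$, a further subsequence $f(g(n))$ produces a full quantum isometry $\pi : \mathrm{alt}(\mathds{A}^1)\twoheadrightarrow\mathrm{alt}(\mathds{A}^2)$ with $\targetsettunnel{\tau_{f(g(n))}'}{a}{l'}\to\{\pi(a)\}$ for every $a\in\dom{\Lip_{\mathrm{alt}(\mathds{A}^1)}}$ and $l'\geq \Lip_{\mathrm{alt}(\mathds{A}^1)}(a)$. It only remains to check that $(\pi,\Theta)$ is a module morphism, since all other requirements for a full metrical quantum isometry will already be in hand.

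The main obstacle, and the genuinely new content, is this last compatibility. Fix $a\in\dom{\Lip_{\mathrm{alt}(\mathds{A}^1)}}$ and $\omega\in\dom{\CDN_{\module{M}_1}}$, set $l = \CDN_{\module{M}_1}(\omega)$ and $l' = \Lip_{\mathrm{alt}(\mathds{A}^1)}(a)$, and for each $n$ pick $\eta_n\in\targetsettunnel{\tau_{f(g(n))}}{\omega}{l}$ and $b_n\in\targetsettunnel{\tau_{f(g(n))}'}{a}{l'}$. Then $\eta_n\to\Theta(\omega)$ and $b_n\to\pi(a)$ in their respective norms, so by the inequality $\norm{b_n\eta_n-\pi(a)\Theta(\omega)}{}\leq\norm{b_n-\pi(a)}{}\norm{\eta_n}{}+\norm{\pi(a)}{}\norm{\eta_n-\Theta(\omega)}{}$ we get $b_n\eta_n\to\pi(a)\Theta(\omega)$. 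On the other hand, Proposition (\ref{action-prop}) places $b_n\eta_n\in\targetsettunnel{\tau_{f(g(n))}}{a\omega}{L}$ for $L=G(\norm{a}{\mathrm{alt}(\mathds{A}^1)}+2l\cdot\tunnelextent{\tau_{f(g(n))}'},l',l)$, which is uniformly bounded since $\tunnelextent{\tau_{f(g(n))}'}\to 0$. Up to enlarging $L$ once to a fixed value, the established convergence of target sets for the modular tunnels forces $b_n\eta_n\to\Theta(a\omega)$. By uniqueness of limits, $\Theta(a\omega)=\pi(a)\Theta(\omega)$ on the dense subspaces $\dom{\Lip_{\mathrm{alt}(\mathds{A}^1)}}$ and $\dom{\CDN_{\module{M}_1}}$, and extends to all of $\mathrm{alt}(\mathds{A}^1)$ and $\module{M}_1$ by joint continuity of the action together with the uniform bound $\norm{b\omega}{}\leq\norm{b}{}\norm{\omega}{}$ noted after Definition (\ref{metrical-bundle-def}). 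Thus $(\theta,\Theta,\pi)$ is a full metrical quantum isometry, completing the proof. The delicate point is keeping the target-set sizes uniformly controlled while simultaneously extracting convergent diagonal subsequences for the two independent streams of data; this is why the monotonicity of $G$ and the vanishing of $\tunnelextent{\tau_n'}$ are essential.
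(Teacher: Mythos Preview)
Your proposal is correct and follows essentially the same route as the paper: reduce the pseudo-metric properties to Proposition (\ref{pseudo-metric-prop}), extract a common subsequence along which both the modular target sets $\targetsettunnel{\tau_n}{\omega}{l}$ collapse to $\{\Theta(\omega)\}$ and the basic target sets $\targetsettunnel{\tau_n'}{a}{l'}$ collapse to $\{\pi(a)\}$, and then invoke Proposition (\ref{action-prop}) together with uniqueness of limits to obtain $\Theta(a\omega)=\pi(a)\Theta(\omega)$. The paper handles the uniform bound on the target-set parameter exactly as you do, by bounding all extents by $1$ and using monotonicity of $G$ to pass to a single fixed level $L$.
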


\begin{proof}
  The dual metrical propinquity $\dmetpropinquity{\mathcal{T}}$ is a pseudo-metric on the class $\mathcal{C}$ by a similar argument as in Proposition (\ref{pseudo-metric-prop}). We thus focus on proving that distance zero implies the existence of a full metrical quantum isometry.

  Let $\mathds{A} = (\module{M},\inner{\cdot}{\cdot}{\module{M}},\CDN_{\module{M}},\A,\Lip_\A,\A',\Lip'_\A)$ and $\mathds{B} = (\module{N},\inner{\cdot}{\cdot}{\module{N}},\CDN_{\module{N}},\B,\Lip_\B,\B',\Lip'_\B)$ be two {\MVB{F}{G}{H}s}.

  If $\dmetpropinquity{}(\mathds{A},\mathds{B}) = 0$ then there exists a sequence of metrical tunnels $(\tau_n,\upsilon_n)_{n\in\N}$ with $\lim_{n\rightarrow\infty} \tunnelextent{\tau_n,\upsilon_n} = 0$ --- we may as well assume $\tunnelextent{\tau_n,\upsilon_n} \leq 1$ for all $n\in\N$. Using both \cite{Latremoliere13b} and the proof of Theorem (\ref{zero-thm}), there exists a strictly increasing function $f : \N\rightarrow\N$, a full modular quantum isometry $(\theta,\Theta)$ and a full quantum isometry $\pi$ such that:
\begin{enumerate}
  \item  for all $a\in\dom{\Lip_\A'}$ and $l\geq \Lip_\A'(a)$, the sequence $\left(\targetsettunnel{\upsilon_{f(n)}}{a}{l}\right)_{n\in\N}$ converges to $\{\pi(a)\}$ for $\Haus{\norm{\cdot}{\A'}}$,
  \item for all $\omega\in\dom{\CDN_{\module{M}}}$ and $l\geq \CDN_{\module{M}}(\omega)$, the sequence $\left(\targetsettunnel{\tau_{f(n)}}{\omega}{l}\right)_{n\in\N}$ converges to $\Theta(\omega)$ for $\Haus{\norm{\cdot}{\module{N}}}$.
\end{enumerate}

 let $a\in\sa{\A'}$ and $\omega\in \module{M}$. If for all $n\in\N$, we choose $a_n \in \targetsettunnel{\upsilon_{f(n)}}{a'}{\Lip_\A'(a')}$ and $\omega_n \in \targetsettunnel{\tau_{f(n)}}{\omega}{\CDN_{\module{M}}(\omega)}$, then Proposition (\ref{action-prop}) applies to give us:
 \begin{multline*}
   a_n \omega_n \in \targetsettunnel{\tau_{f(n)}}{a\omega}{G(\norm{a}{\A} + 2 \Lip'_\A(a) \tunnelextent{\upsilon_{f(n)}}),\Lip'_\A(a),\CDN_{\module{M}}(\omega)} \\ \subseteq \targetsettunnel{\tau_{f(n)}}{a\omega}{G(\norm{a}{\A} + 2 \Lip'_\A(a)),\Lip'_\A(a),\CDN_{\module{M}}(\omega)} \text{.}
 \end{multline*}

 Since $(a_n)_{n\in\N}$ converges to $\pi(a)$, $(\omega_n)_{n\in\N}$ converges to $\Theta(\omega)$ and $a_n\omega_n$ converges to $\Theta(a\omega)$, we conclude that $\pi(a)\Theta(\omega) = \Theta(a\omega)$.

  By linearity and continuity, it then follows that $\pi(a)\Theta(\omega) = \Theta(a\omega)$ for all $a\in\A'$ and $\omega\in\module{M}$, as desired. Thus $(\theta,\Theta,\pi)$ is a full metrical quantum isometry from $\mathds{A}$ to $\mathds{B}$.
\end{proof}

\section{Completeness}

We prove that the dual-modular propinquity is complete, which is one of the main justification for its introduction. We begin by checking that taking a quotient of a D-norm gives a D-norm.

\begin{lemma}\label{quotient-lemma}
  Let $(\module{M},\inner{\cdot}{\cdot}{\module{M}},\CDN,\A,\Lip)$ be a {\QVB{F}{H}} with $H$ continuous. If $(\B,\Lip_\B)$ is a {\Qqcms{F}}, $\module{N}$ is a Hilbert $\B$-module, and $(\pi,\Pi)$ is a surjective Hilbert module morphism from $\module{M}$ to $\module{N}$ such that $\pi$ is a quantum isometry from $(\A,\Lip)$ to $(\B,\Lip_\B)$, and if $\CDN'$ is defined as:
  \begin{equation*}
    \forall \omega \in \module{N} \quad \CDN'(\omega) = \inf \left\{ \CDN(\eta) : \Pi(\eta) = \omega \right\}
  \end{equation*}
  allowing $\CDN'$ to take the value $\infty$, then $(\module{N},\inner{\cdot}{\cdot}{\module{N}},\CDN',\B,\Lip_\B)$ is a {\QVB{F}{H}}.
\end{lemma}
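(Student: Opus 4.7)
The plan is to verify the four requirements of Definition (\ref{metrized-bundle-def}) for $\CDN'$ on the Hilbert $\B$-module $\module{N}$: (a) $\CDN'$ is a norm defined on a dense $\C$-linear subspace; (b) $\norm{\omega}{\module{N}} \leq \CDN'(\omega)$; (c) the unit ball of $\CDN'$ is $\norm{\cdot}{\module{N}}$-compact; and (d) the inner quasi-Leibniz inequality holds with respect to $(\B,\Lip_\B)$. The map $\Pi$ is $\C$-linear and continuous, and the Hilbert module morphism identity $\pi(\inner{\eta_1}{\eta_2}{\module{M}}) = \inner{\Pi(\eta_1)}{\Pi(\eta_2)}{\module{N}}$ will be the main bridge between $\module{M}$-data and $\module{N}$-data.

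First, I would observe that $\Pi(\dom{\CDN})\subseteq\dom{\CDN'}$, and since $\Pi$ is continuous and $\dom{\CDN}$ is dense in $\module{M}$ and $\Pi$ is surjective, $\dom{\CDN'}$ is dense in $\module{N}$. The standard arguments (infimum of subadditive, homogeneous quantities) show that $\CDN'$ is a seminorm on its domain; that $\CDN'(\omega)=0$ implies $\omega=0$ will follow from the attainment argument below combined with $\CDN$ being a norm. For property (b), given $\omega\in\dom{\CDN'}$ and any $\eta\in\module{M}$ with $\Pi(\eta)=\omega$, the Hilbert module morphism property yields $\inner{\omega}{\omega}{\module{N}}=\pi(\inner{\eta}{\eta}{\module{M}})$, and since $\pi$ is a *-homomorphism, $\norm{\omega}{\module{N}}\leq\norm{\eta}{\module{M}}\leq\CDN(\eta)$; taking the infimum over such $\eta$ gives $\norm{\omega}{\module{N}}\leq\CDN'(\omega)$.

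The heart of the proof, which I expect to be the main technical step (though not genuinely difficult), is showing that the infimum defining $\CDN'$ is always attained when finite. Fix $\omega\in\dom{\CDN'}$ and choose $\eta_n\in\Pi^{-1}(\{\omega\})$ with $\CDN(\eta_n) \leq \CDN'(\omega) + \frac{1}{n+1}$. Since eventually $\eta_n$ lies in the $\norm{\cdot}{\module{M}}$-compact set $\{\xi\in\module{M}:\CDN(\xi)\leq\CDN'(\omega)+1\}$, we may extract a convergent subsequence with limit $\eta$. Lower semi-continuity of $\CDN$ yields $\CDN(\eta)\leq\CDN'(\omega)$, while continuity of $\Pi$ yields $\Pi(\eta)=\omega$; hence equality. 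It follows that $\{\omega\in\module{N}:\CDN'(\omega)\leq 1\}=\Pi(\{\eta\in\module{M}:\CDN(\eta)\leq 1\})$, which is norm-compact in $\module{N}$ as the continuous image of a norm-compact set, giving (c). The same set equality for arbitrary sublevel sets shows $\CDN'$ is lower semi-continuous.

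Finally, for (d), given $\omega_1,\omega_2\in\dom{\CDN'}$, pick lifts $\eta_j\in\Pi^{-1}(\{\omega_j\})$ with $\CDN(\eta_j)=\CDN'(\omega_j)$ by attainment. Then $\inner{\omega_1}{\omega_2}{\module{N}}=\pi(\inner{\eta_1}{\eta_2}{\module{M}})$, and since $\pi$ is a *-homomorphism, $\Re\inner{\omega_1}{\omega_2}{\module{N}}=\pi(\Re\inner{\eta_1}{\eta_2}{\module{M}})$ and likewise for the imaginary parts. Since $\pi:(\A,\Lip_\A)\twoheadrightarrow(\B,\Lip_\B)$ is a quantum isometry, $\Lip_\B(\pi(a))\leq\Lip_\A(a)$ for every $a\in\dom{\Lip_\A}$. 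Combining with the inner quasi-Leibniz inequality of $(\module{M},\inner{\cdot}{\cdot}{\module{M}},\CDN,\A,\Lip_\A)$ yields
\begin{equation*}
\max\left\{\Lip_\B(\Re\inner{\omega_1}{\omega_2}{\module{N}}),\Lip_\B(\Im\inner{\omega_1}{\omega_2}{\module{N}})\right\} \leq H(\CDN(\eta_1),\CDN(\eta_2)) = H(\CDN'(\omega_1),\CDN'(\omega_2))\text{,}
\end{equation*}
which completes the verification.
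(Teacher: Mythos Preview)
Your proof is correct and follows essentially the same route as the paper's: density of $\dom{\CDN'}$ via $\Pi(\dom{\CDN})$, the norm inequality via the Hilbert module morphism identity, and compactness of the unit ball via the attainment argument (extract a convergent subsequence from approximate lifts using compactness of $\CDN$-balls, then invoke lower semi-continuity and continuity of $\Pi$). The one noteworthy difference is in the inner quasi-Leibniz step: you use attainment of the infimum to choose lifts $\eta_j$ with $\CDN(\eta_j)=\CDN'(\omega_j)$ exactly, whereas the paper instead picks $\delta$-approximate lifts and uses continuity of $H$ at $(\CDN'(\omega),\CDN'(\eta))$ to pass to the limit as $\varepsilon\to 0$. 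Your version is slightly cleaner and in fact does not require continuity of $H$ for this particular step, though the hypothesis is still present in the statement.
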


\begin{proof}
  Note that $\module{N} = \Pi(\module{M}) = \Pi(\mathrm{cl}(\dom{\CDN})) \subseteq \mathrm{cl}(\Pi(\dom{\CDN}))$, and if $\omega\in\Pi(\dom{\CDN})$ then $\CDN'(\omega) < \infty$, so $\Pi(\dom{\CDN}) \subseteq\dom{\CDN'}$ (in fact, these two sets are obviously equal). So $\CDN'$ has dense domain. It is a standard argument to show that $\CDN'$ is a norm on its domain since $\CDN$ is and $\Pi$ is a $\C$-linear map.

  Let $\omega\in\module{N}$ such that $\CDN'(\eta)\leq 1$. For all $n\in\N$, there exists $\eta_n \in \module{M}$ such that $\Pi(\eta_n) = \omega$ and $\CDN(\eta_n)\leq 1 + \frac{1}{n+1}$. As $\{\eta\in\module{M} : \CDN(\eta)\leq 2\}$ is compact, there exists a subsequence $(\eta_{f(n)})_{n\in\N}$ converging to some $\eta\in\module{M}$; by lower semi-continuity of $\CDN$, we have $\CDN(\eta)\leq 1$, and by continuity of $\Pi$, we have $\Pi(\eta) = \omega$. Therefore, the unit ball of $\CDN'$ is the image of the unit ball of $\CDN$, and since the unit ball of $\CDN$ is compact and $\Pi$ is continuous, the unit ball of $\CDN'$ is compact as well.

  Moreover, if $\omega\in\module{N}$, then for any $\eta\in \Pi^{-1}(\{\omega\})$, we have:
  \begin{equation*}
    \norm{\omega}{\module{N}} = \norm{\Pi(\eta)}{\module{N}} \leq \norm{\eta}{\module{M}} \leq \CDN(\eta) \text{.}
  \end{equation*}
  Hence $\norm{\omega}{\module{N}} \leq \CDN'(\omega)$ by construction.

  Let $\omega,\eta \in \module{R}$. Let $\varepsilon > 0$. Since $H$ is continuous at  $(\CDN_\infty(\omega),\CDN_\infty(\eta))$, there exists $\delta > 0$, such that if $|\CDN_\infty(\omega)-t| \leq \delta$ and $|\CDN_\infty(\eta)-s| \leq \delta$ then $|H(t,s) - H(\CDN_\infty(\omega),\CDN_\infty(\eta))| < \varepsilon$.

 There exists $\zeta,\xi \in \module{M}$ with $\CDN(\zeta) \leq \CDN(\omega) + \delta$ and $\CDN(\xi) \leq \CDN(\eta) + \delta$. We then compute:
\begin{align*}
  \Lip_\B\left(\inner{\omega}{\eta}{\infty}\right) 
  &\leq \Lip_\B(\inner{\Pi(\zeta)}{\Pi(\xi)}{\module{N}}) \\
  &= \Lip_\B(\pi(\inner{\zeta}{\xi}{\module{M}}))\\
  &\leq \Lip(\inner{\zeta}{\eta}{\module{M}}) \\
  &\leq H(\CDN(\zeta),\CDN(\eta)) \\
  &\leq H(\CDN'(\omega) + \delta,\CDN'(\eta) + \delta) \\
  &\leq H(\CDN'(\omega),\CDN'(\eta)) + \varepsilon \text{.}
\end{align*}
As $\varepsilon > 0$ is arbitrary, we conclude that:
\begin{equation*}
  \Lip_\B\left(\inner{\omega}{\eta}{\module{N}}\right) \leq H(\CDN'(\omega),\CDN'(\eta))\text{.}
\end{equation*}
Thus, $(\module{N},\inner{\cdot}{\cdot}{\module{N}}, \CDN', \B, \Lip_\B)$ is a {\QVB{F}{H}}.
\end{proof}

\begin{notation}
  Let $(\A_j)_{j \in J}$ be a family of unital C*-algebras indexed by $J$. We denote by $\prod_{j \in J}\A_j$ the C*-algebra:
  \begin{equation*}
    \left\{ (a_j)_{j \in J} : \forall j \in J \quad a_j \in \A_j\text{ and }\sup_{j \in J} \norm{a_j}{\A_j} < \infty \right\}
  \end{equation*}
  for the norm $\norm{(a_j)_{j \in J}}{\prod_{j \in J}\A_j} = \sup_{j \in J}\norm{a_j}{\A_j}$.
\end{notation}

\begin{theorem}\label{completeness-thm}
  The metric $\dmodpropinquity{F,H}$ is complete on the class of all {\QVB{F}{H}s} if $F$ and $H$ are continuous.
\end{theorem}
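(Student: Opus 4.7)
The strategy parallels the completeness proof for the dual Gromov-Hausdorff propinquity from \cite{Latremoliere13b}, with the module layer handled through the target-set calculus of Propositions (\ref{targetset-morphism-prop}) and (\ref{targetset-compact-prop}), and with Lemma (\ref{quotient-lemma}) used to carry D-norms across quotients. First I would reduce to a rapidly Cauchy sequence: given a Cauchy sequence $(\mathds{A}_n)_{n\in\N}$ of {\QVB{F}{H}s} with $\mathds{A}_n=(\module{M}_n,\inner{\cdot}{\cdot}{n},\CDN_n,\A_n,\Lip_n)$, extract a subsequence so that for every $n$ there is a modular tunnel $\tau_n$ from $\mathds{A}_n$ to $\mathds{A}_{n+1}$ with $\tunnelextent{\tau_n}\leq 4^{-n}$. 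By Assertion (4) of Proposition (\ref{pseudo-metric-prop}), the base sequence $((\A_n,\Lip_n))_{n\in\N}$ is Cauchy for $\dpropinquity{F}$. Theorem-Definition (\ref{prop-def}), which uses the assumed continuity of $F$, together with the explicit construction from \cite{Latremoliere13b}, yields a limit {\Qqcms{F}} $(\A_\infty,\Lip_\infty)$ and tunnels $\upsilon_n$ from $(\A_\infty,\Lip_\infty)$ to $(\A_n,\Lip_n)$ with $\tunnelextent{\upsilon_n}\to 0$, with $\A_\infty$ realized concretely inside a subquotient of $\prod_{n\in\N}\A_n$.

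Next I would build the limit module $\module{M}_\infty$ by mimicking the base construction. Consider the collection $\mathcal{F}$ of sequences $(\omega_n)_{n\in\N}$ with $\omega_n\in\module{M}_n$ for which there exists $l\geq 0$ with $\omega_{n+1}\in\targetsettunnel{\tau_n}{\omega_n}{l}$ for all large $n$. By Proposition (\ref{targetset-morphism-prop}) the set $\mathcal{F}$ is a $\C$-vector space, and the diameter bound $\sqrt{2}H(2l,1)\tunnellength{\tau_n}$ from the same proposition, combined with the topological equivalence of $\KantorovichMod{\CDN_n}$ and $\norm{\cdot}{\module{M}_n}$ on bounded sets (Proposition (\ref{modular-mongekant-prop})), makes the inter-coordinate distortion of elements of $\mathcal{F}$ summable when transferred into the natural Hilbert module over $\A_\infty$ assembled in parallel with $\A_\infty$ inside a subquotient of $\prod_{n\in\N}\module{M}_n$. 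The limit module $\module{M}_\infty$ is then the completion of $\mathcal{F}$ modulo null sequences, equipped with the $\A_\infty$-valued inner product $\inner{\omega}{\eta}{\infty}=\lim_n\inner{\omega_n}{\eta_n}{n}$, which lies in $\A_\infty$ by construction.

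Then I would define the D-norm
\[
\CDN_\infty(\omega)=\inf\left\{\liminf_{n\to\infty}\CDN_n(\omega_n):(\omega_n)\in\mathcal{F}\text{ represents }\omega\right\}
\]
and verify the axioms of Definition (\ref{metrized-bundle-def}): density of the domain is immediate; the inequality $\norm{\cdot}{\module{M}_\infty}\leq\CDN_\infty$ is inherited from the finite levels; the inner quasi-Leibniz inequality for $(\CDN_\infty,\Lip_\infty)$ passes to the limit using continuity of $H$, lower semi-continuity of $\Lip_\infty$, and the fact that the inner product is a limit of inner products. The most delicate axiom is compactness of the $\CDN_\infty$-unit ball, for which I would take a sequence $(\omega^{(k)})_k$ with representatives $(\omega^{(k)}_n)_n\in\mathcal{F}$ of uniformly bounded D-norm, diagonally extract using compactness of each $\{\eta\in\module{M}_n:\CDN_n(\eta)\leq 1\}$, and use the target-set diameter bound together with Lemma (\ref{quotient-lemma}) to show that the coordinate-wise limits assemble into an element of $\mathcal{F}$ reached in $\norm{\cdot}{\module{M}_\infty}$. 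Finally, to prove $\mathds{A}_n\to\mathds{A}_\infty$, I would, for each $n$, combine $\upsilon_n$ with the natural surjection $\module{M}_\infty\twoheadrightarrow\module{M}_n$ induced by the $n$-th coordinate, and invoke Lemma (\ref{quotient-lemma}) to upgrade this into a modular tunnel from $\mathds{A}_\infty$ to $\mathds{A}_n$ of extent at most $\sum_{k\geq n}\tunnelextent{\tau_k}+\tunnelextent{\upsilon_n}\to 0$.

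The main obstacle I anticipate is the construction of $\module{M}_\infty$ together with the verification that $\CDN_\infty$ is a genuine $H$-quasi-Leibniz D-norm whose unit ball is compact in $\norm{\cdot}{\module{M}_\infty}$. This rests on three ingredients pulling in concert: continuity of $H$ so that the inner quasi-Leibniz inequality survives $\liminf$; a careful diagonal argument trading compactness of the individual $\CDN_n$-unit balls against the geometric shrinking of target sets; and the topological equivalence of $\KantorovichMod{\CDN_n}$ with $\norm{\cdot}{\module{M}_n}$ on bounded sets, used to move between the two natural metrics on each module. These are the technical core of the argument and mirror the delicate points in the base-level proof from \cite{Latremoliere13b}.
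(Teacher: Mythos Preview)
Your outline has the right shape, but two structural choices diverge from the paper's proof and create genuine gaps. First, you build $\mathcal{F}$ from sequences $(\omega_n)$ with $\omega_n\in\module{M}_n$, linked by the target-set relation $\omega_{n+1}\in\targetsettunnel{\tau_n}{\omega_n}{l}$. The paper instead works with sequences $(\xi_n)$ in the \emph{tunnel modules} $\module{P}_n$ (where $\tau_n=(\mathds{D}_n,(\theta_n,\Theta_n),(\sigma_n,\Sigma_n))$ and $\mathds{D}_n$ has module $\module{P}_n$ over $\D_n$), linked by the exact compatibility $\Sigma_n(\xi_n)=\Theta_{n+1}(\xi_{n+1})$. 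This matters for the inner product: $\inner{\xi_n}{\eta_n}{}^n$ lands in $\D_n$, and because $(\theta_n,\Theta_n)$ and $(\sigma_n,\Sigma_n)$ are Hilbert module morphisms, the resulting sequence automatically satisfies $\sigma_n(d_n)=\theta_{n+1}(d_{n+1})$, hence lies in the pre-quotient algebra $\alg{E}_N\subseteq\prod_{n\geq N}\D_n$ of \cite{Latremoliere13b}. By contrast, your inner products $\inner{\omega_n}{\eta_n}{n}$ live in $\A_n$, and there is no direct compatibility map between $\A_n$ and $\A_{n+1}$: the tunnel maps go \emph{from} $\D_n$ \emph{to} $\A_n$ and $\A_{n+1}$, not between the latter. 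Your claim that $\inner{\omega}{\eta}{\infty}=\lim_n\inner{\omega_n}{\eta_n}{n}$ ``lies in $\A_\infty$ by construction'' is therefore unjustified (and note that in \cite{Latremoliere13b}, the limit algebra is realized inside a subquotient of $\prod\D_n$, not $\prod\A_n$).

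Second, and more seriously, the ``natural surjection $\module{M}_\infty\twoheadrightarrow\module{M}_n$ induced by the $n$-th coordinate'' does not exist: $\module{M}_\infty$ is a quotient by null sequences, and two representatives of the same class can differ arbitrarily in their $n$-th coordinate. The paper's remedy is to use the \emph{pre-quotient} $\module{Q}_N$ (a Hilbert $\alg{E}_N$-module, with D-norm $\CDN_{\geq N}((\xi_n))=\sup_{n\geq N}\CDN^n(\xi_n)$) as the middle of the tunnel: the $N$-th coordinate map $\Pi_N:\module{Q}_N\twoheadrightarrow\module{M}_N$ is then well-defined, and the quotient map $q_N:\module{Q}_N\twoheadrightarrow\module{R}^N$ is a modular quantum isometry by Lemma~(\ref{quotient-lemma}). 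The sup definition of $\CDN_{\geq N}$ also makes compactness of its unit ball a one-line consequence of Tychonoff's theorem (it is a closed subset of $\prod_n\{\CDN^n\leq 1\}$), whereas your $\liminf$ definition gives no uniform coordinate-wise bound, so the diagonal extraction you sketch is not straightforward. The fix for all of this is to work in the tunnel modules $\module{P}_n$ over $\D_n$ from the start.
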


\begin{proof}
Let us be given a sequence $\mathds{A}_n = (\module{M}_n,\inner{\cdot}{\cdot}{n}, \CDN_n, \A_n, \Lip_n)_{n\in\N}$ of {\QVB{F}{H}s} such that:
\begin{equation*}
  \sum_{n\in\N} \dmodpropinquity{F,H}(\mathds{A}_n,\mathds{A}_{n+1})  < \infty \text{.}
\end{equation*}

For each $n\in\N$, let:
\begin{equation*}
  \tau_n = \left( \mathds{D}_n, (\theta_n,\Theta_n), (\sigma_n,\Sigma_n) \right)
\end{equation*}
be a modular tunnel from $\mathds{A}_n$ to $\mathds{A}_{n+1}$ with $\tunnellength{\tau_n} \leq \dmodpropinquity{}(\mathds{A}_n,\mathds{A}_{n+1}) + \frac{1}{2^n}$, where:
\begin{equation*}
  \mathds{D}_n = \left( \module{P}_n, \inner{\cdot}{\cdot}{}^n, \CDN^n, \D_n, \Lip^n \right)\text{.}
\end{equation*}

Fix $N\in\N$, and denote by $\N_N$ the subset $\{ k \in \N : k \geq N \}$ of $\N$.

We begin by recalling some constructions from \cite{Latremoliere13b}. Set:
\begin{equation*}
  \alg{S}_N = \left\{ (d_n)_{n\in\N_N} \in \prod_{n\in\N_N}\D_n \middle\vert
    \begin{array}{l}
      \forall n \in \N_N \quad \sigma_n(d_n) = \theta_{n+1}(d_{n+1})\text{,} \\
      \sup\{\Lip^n(\Re d_n),\Lip^n(\Im d_n):n\in\N\} < \infty
      \end{array}
    \right\} \text{.}
\end{equation*}
For $d = (d_n)_{n\in\N_N} \in \alg{S}_N$, we set $\mathsf{S}_N(d) = \sup_{n\in\N_N}\Lip^n(d_n)$. Let $\alg{E}_n$ be the closure of $\alg{S}_N$ in the C*-algebra $\prod_{n\in\N_N}\D_n$. By \cite[Proposition 6.17]{Latremoliere13b}, the pair $(\alg{E}_N,\mathsf{S}_N)$ is a {\Qqcms{F}} (the proof of \cite[Proposition 6.17]{Latremoliere13b} applies to $F$-quasi-Leibniz when $F$ is continuous as discussed in \cite{Latremoliere15}). We define the map $\pi_N : (d_n)_{n\in\N_N} \in \alg{E}_N  \mapsto \theta_N(d_N) \in \A_N$. By \cite[Corollary 6.9]{Latremoliere13b}, the map $\pi_N$ is a quantum isometry from $(\alg{E}_N,\mathsf{S}_N)$ onto $(\A_N,\Lip_N)$.

Now, we define $\alg{I}_N = \left\{ (d_n)_{n\in\N_N} : \lim_{n\rightarrow\infty} \norm{d_n}{\D_n} = 0 \right\}$ --- note that $\alg{I}_N$ is a closed, two sided ideal in $\alg{E}_N$. Let $\alg{F}^N = \bigslant{\alg{E}_N}{\alg{I}_N}$ and let $\mathsf{Q}^N$ be the quotient seminorm of $\mathsf{S}_N$ on $\alg{F}^N$. Using \cite[Lemma 6.20, Lemma 6.21]{Latremoliere13b}, we in fact know that $(\alg{F}^N,\mathsf{Q}^N)$ is a {\Qqcms{F}} which is fully quantum isometric to $(\alg{F}^{N+1},\mathsf{Q}^{N+1})$. It will be helpful to explicit the full quantum isometry for our purpose, so we present a construction here.

Let:
\begin{equation*}
  m_N : (d_k)_{k\in\N_N} \in \alg{E}_N \longmapsto (d_k)_{k\in\N_{N+1}} \in \alg{E}_{N+1}\text{.}
\end{equation*}
We note that $m_N$ is a *-epimorphism and that the quotient seminorm of $\mathsf{S}_N$ via $m_N$ is $\mathsf{S}_{N+1}$, for the following reason: if $d = (d_n)_{n\in\N_{N+1}}$ then, since $\tau_N$ is a tunnel, there exists $d_N \in \sa{D_N}$ such that $\Lip^N(d_N) \leq \Lip^{N+1}(d_{N+1}) \leq \mathsf{S}_{N+1}(d)$ and $\sigma_N(d_N) = \theta_{N+1}(d_{N+1})$,  and thus $(d_n)_{n\in\N_N} \in \alg{E}_N$, with $m_N((d_n)_{n\in\N_N}) = d$ and $\mathsf{S}_N((d_n)_{n\in\N_N}) = \mathsf{S}_{N+1}(d)$. Of course if $d\in\alg{E}_{N+1}$ is not self-adjoint, the previous construction can be applied to $\Re d$ and $\Im d$ to prove that $m_N$ is a surjection.

Moreover, it is immediate that $m_N^{-1}(\alg{I}_{N+1}) = \alg{I}_N$. Therefore, there exists a *-automorphism $y_N : \alg{F}^N \rightarrow \alg{F}^{N+1}$ uniquely determined by the following commutative diagram:
\begin{equation*}
  \xymatrix{
    \alg{E}_N \ar@{>>}[d]_{p_N} \ar@{>>}[r]^{m_N} & \alg{E}_{N+1} \ar@{>>}[d]^{p_{N+1}}\\
    \alg{F}^N \ar[r]^{y_N} & \alg{F}^{N+1}
    }
\end{equation*}
where $p_N : \alg{E}_N \twoheadrightarrow \alg{F}^N$ and $p_{N+1} : \alg{E}_{N+1} \twoheadrightarrow \alg{F}^{N+1}$ are the canonical surjections. Moreover, if $a \in \alg{F}^{N}$ and we write $a' = y_N(a)$, and if $\varepsilon > 0$, then there exists $d' = (d'_n)_{n\in\N_{N+1}} \in \alg{E}_{N+1}$ with $\mathsf{S}_{N+1}(d') \leq \mathsf{Q}^{N+1}(a') + \varepsilon$ and $p_{N+1}(q') = a'$. As we have seen, there exists $d \in \alg{E}_N$ such that $\mathsf{S}_N(d) = \mathsf{S}_{N+1}(d')$ and $m_N(d) = d'$. Using our commuting diagram, we then have $p_N(d) = a$. Thus $\mathsf{Q}^N(a) \leq \mathsf{S}_N(d') = \mathsf{S}_{N+1}(d) \leq \mathsf{Q}^{N+1}(y_N(a)) + \varepsilon$. As $\varepsilon > 0$ is arbitrary, the *-automorphism $y_N$ is $1$-Lipschitz. On the other hand, for any $d\in\alg{E}_N$ with $p_N(d) = a$, we estimate:
\begin{equation*}
 \mathsf{S}_N(d) \geq \mathsf{S}_{N+1}(m_N(d)) \geq \mathsf{Q}^{N+1}(p_{N+1}\circ m_N(d)) = \mathsf{Q}^{N+1}(y_N(a))\text{,}
\end{equation*}
so  $\mathsf{Q}^N(a) \geq  \mathsf{Q}^{N+1}(y_N(a))$ so $y_N$ is a full quantum isometry. Whenever convenient, we identify $(\alg{F}^N,\Lip_\infty^N)$ with $(\alg{F}^{N+1},\Lip_\infty^{N+1})$ as {\Qqcms{F}} and drop the superscript $N$.

By \cite[Corollary 6.25]{Latremoliere13b}, for all $\varepsilon > 0$, there exists $N\in\N$ such that if $n\geq N$ then $(\alg{E}_n,\mathsf{S}_n,\pi_n,p_n)$ is a tunnel from $(\A_n,\Lip_n)$ to $(\alg{F},\mathsf{Q})$ of extent at most $\varepsilon$. This is how we proved that $(\A_n,\Lip_n)_{n\in\N}$ converges to $(\alg{F},\mathsf{Q})$.

We now extend this construction to modular tunnels.

Fix $N \in \N$ again. We define:
\begin{equation*}
  \module{Q}_N = \left\{ (\omega_n)_{n\in\N_N} \in \prod_{n\in\N_N} \module{P}_n \middle\vert 
    \begin{array}{l}
      \forall n\in\N_N \quad \Sigma_n(\omega_n) = \Theta_{n+1}(\omega_{n+1}) \\
      \sup_{n\in\N_N}\CDN^n(\omega_n) < \infty
    \end{array} \right\}\text{.}
\end{equation*}

For all $\omega\in\module{Q}_N$, we set:
\begin{equation*}
  \CDN_{\geq N}(\omega) = \sup_{n\in\N_N} \CDN^n(\omega_n) \text{.}
\end{equation*}
We also set for all $\omega = (\omega_n)_{n\in\N_N} ,\eta = (\eta_n)_{n\in\N_N} \in\module{N}_N$:
\begin{equation*}
  \inner{\omega}{\eta}{N} = \left(\inner{\omega_n}{\eta_n}{\D_n}\right)_{n\in\N_N}\text{.}
\end{equation*}
Note that $\inner{\omega}{\eta}{N} \in \alg{S}_N$ by construction since for all $n\in\N_N$:
\begin{align*}
  \theta_{n+1}\left(\inner{\omega_{n+1}}{\eta_{n+1}}{\D_{n+1}} \right) 
  &= \inner{\Theta_{n+1}(\omega_{n+1})}{\Theta_{n+1}(\eta_{n+1})}{\D_{n+1}} \\
  &= \inner{\Sigma_{n}(\omega_n)}{\Sigma_n(\eta_n)}{\D_n} \\
  &= \sigma_n\left( \inner{\omega_n}{\eta_n}{\D_n}  \right) \text{.}
\end{align*}

Let $\module{Q}_N$ be the closure of $\module{Q}_N$ in $\prod_{n\in\N_N}\module{P}_n$. It is immediate to check that the $\alg{S}_N$-inner product of $\module{R}_N$ extends to an $\alg{E}_N$-inner product on $\module{Q}_N$.

We now want to prove that $(\module{Q}_N,\inner{\cdot}{\cdot}{N}, \CDN_{\geq N}, \alg{E}_N,\mathsf{S}_N)$ is a {\QVB{F}{H}}. We already know that $(\alg{E}_N,\mathsf{S}_N)$ is a {\Qqcms{F}} and that $(\module{Q}_N,\inner{\cdot}{\cdot}{N})$ is a Hilbert $\alg{E}_N$-module. We are left proving that $\CDN_{\geq N}$ is a D-norm.

First, if $(\omega_n)_{n\in\N_N}$ then we check:
\begin{equation*}
  \norm{(\omega_n)_{n\in\N}}{\module{Q}_N} = \sup_{n\in\N_N} \norm{\omega_n}{\module{P}_n} \leq \sup_{n\in\N_N} \CDN^n(\omega_n) = \CDN_{\geq N}( (\omega_n)_{n\in\N} )\text{.}
\end{equation*}

Second, we note that if $\omega=(\omega_n)_{n\in\N_N}, \eta=(\eta_n)_{n\in\N_N} \in \module{Q}_N$ then:
\begin{align*}
  \mathsf{S}_N\left(\inner{\omega}{\eta}{N}\right) 
  &= \sup_{n\in\N_N} \Lip^n\left(\inner{\omega_n}{\eta_n}{\D_n}\right) \\
  &\leq \sup_{n\in\N_N} H\left( \CDN^n(\omega_n), \CDN^n(\eta_n)  \right) \\
  &\leq H(\CDN_{\geq N}(\omega), \CDN_{\geq N}(\eta))
\end{align*}
since $H$ is increasing in both its variables.

Last, note that:
\begin{equation*}
  \left\{ \omega\in\module{Q}_N : \CDN_{\geq N}(\omega)\leq 1\right\} \subseteq \prod_{n\in\N_N} \left\{ \omega \in \module{P}_N : \CDN^n(\omega) \leq 1 \right\}
\end{equation*}
and the right-hand side is compact by Tychonoff theorem. On the other hand, as the supremum of lower semi-continuous functions, $\CDN_{\geq N}$ is lower semi-continuous, and thus its unit ball is closed. As a closed set of a compact set, the unit ball of $\CDN_{\geq N}$ is compact. Hence, $(\module{Q}_N,\inner{\cdot}{\cdot}{N}, \CDN_{\geq N}, \alg{E}_N, \mathsf{S}_N)$ is indeed a {\QVB{F}{H}}.

Let $\module{J}_N = \left\{ (\omega_n)_{n\in\N_N} \in \module{Q}_N : \lim_{n\rightarrow\infty}\norm{\omega_n}{\module{N}_n} = 0 \right\}$. Note that $\module{J}_N$ is a closed submodule of $\module{Q}_N$.

We define $\module{R}^N$ as $\bigslant{\module{Q}_N}{\module{J}_N}$. Now if $d = (d_n)_{n\in\N_N}$ and $d' = (d'_n)_{n\in\N_N}$ in $\alg{E}_N$ with $d-d' \in \alg{I}_N$ and if $\omega = (\omega_n)_{n\in\N_N}$ and $\omega' = (\omega'_n)_{\N_N}$ in $\module{Q}_N$ with $\omega-\omega'\in\module{J}_N$, then:
\begin{equation*}
  \norm{d_n \omega_n - d'_n \omega'_n}{\module{P}_n} \leq \norm{d_n \omega_n - d_n \omega'_n}{\module{P}_n} + \norm{(d'_n - d_n)\omega'_n}{\module{P}_n} \xrightarrow{n\rightarrow\infty} 0 \text{,}
\end{equation*}
so $d\omega-d'\omega' \in \module{J}_N$. From this, it is easy to see that $\module{R}^N$ is a left $\alg{F}^N$-module. It is also easily checked that if $d=(d_n)_{n\in\N_N}, d'=(d'_n)_{n\in\N_N}, e=(e_n)_{n\in\N_N}, e' = (e'_n)_{n\in\N_N} \in \alg{E}_N$ and $\omega=(\omega_n)_{n\in\N_N}, \omega'=(\omega'_n)_{n\in\N_N}, \eta=(\eta_n)_{n\in\N_N}, \eta'=(\eta'_n)_{n\in\N_N} \in \module{Q}_N$, with $d-d', e-e'\in \alg{I}_N$ and $\omega-\omega',\eta-\eta' \in \module{J}_N$, and for all $n\geq N$:
\begin{multline*}
  \norm{\inner{d_n\omega_n}{e_n\eta_n}{\module{P}_n} - \inner{d'_n\omega'_n}{e'_n\eta'_n}{\module{P}_n}}{\D_n} \\
  \begin{split}
    &\leq \norm{\inner{d_n \omega_n - d'_n \omega'_n}{e_n\eta_n}{\module{P}_n}}{\D_n} + \norm{\inner{d'_n\omega'_n}{e_n\eta_n - e'_n\eta'_n}{\module{P}_n}}{\D_n} \\
    &\leq \norm{d_n\omega_n-d'_n\omega'_n}{\module{P}_n} \norm{e_n\eta_n}{\module{P}_n} + \norm{d'_n \omega'_n}{\module{P}_n} \norm{e_n \eta_n -e'_n \eta'_n}{\module{P}_n} \\
    &\xrightarrow{n\rightarrow\infty} 0 \text{,}
  \end{split}
\end{multline*}
so $\module{R}^N$ is in fact a Hilbert $\alg{F}^N$-module, in a natural way. We denote the canonical surjection $\module{Q}_N \twoheadrightarrow \module{R}^N$ by $q_N$. The above computation show that $(p_N,q_N)$ is a Hilbert module morphism from $\module{Q}_N$ (over $\alg{E}_N$) to $\module{R}^N$ (over $\alg{F}^N$). We denote the inner product on $\mathcal{R}^N$ as $\inner{\cdot}{\cdot}{\module{R}^N}$.'

Let:
\begin{equation*}\forall \omega \in \module{Q}_N \quad \CDNa^N(\omega) = \inf\left\{ \CDN_{\geq N}(\eta) : q_N(\eta) = \omega \right\} \text{,}
\end{equation*}
allowing for the possibility that $\CDNa^N$ takes the value $\infty$.

We already know that $p_N$ is a quantum isometry from $(\D_n,\Lip^n)$ to $(\A_n,\Lip_n)$. Hence by Lemma (\ref{quotient-lemma}), we conclude that $(\module{R}^N,\inner{\cdot}{\cdot}{\module{R}^N}, \CDNa^N, \alg{F}^N, \mathsf{Q}^N)$ is indeed a {\QVB{F}{H}}.

Just as was the case for the {\gQqcms s}, we check that the {\gQVB s} $(\module{R}^N,\inner{\cdot}{\cdot}{\module{R}^N}, \CDNa^N, \alg{F}^N, \mathsf{Q}^N)$ are all fully quantum isometric.

The method is identical. We define:
\begin{equation*}
  t_N : (\omega_n)_{n\in\N_N} \in \module{Q}_N \longmapsto (\omega_n)_{n\in\N_{N+1}} \in \module{Q}_{N+1}
\end{equation*}
and easily check that $(m_N,t_N)$ is a Hilbert module morphism from the Hilbert $\alg{E}_N$-module $\module{Q}_N$ to the Hilbert $\alg{E}_{N+1}$-module $\module{Q}_{N+1}$. We know that the map $m_N$ is a quantum isometry. Now, if $\omega = (\omega_n)_{n\in\N_{N+1}} \in \module{Q}_{N+1}$, then since $\tau_N$ is a modular tunnel, there exists $\omega_N \in \module{Q}_N$ such that $\Sigma_N(\omega_N) = \Theta_{N+1}(\omega_{N+1})$ and $\CDN^N(\omega_N) \leq \CDN^{N+1}(\omega_{N+1}) \leq \CDN_{\geq N+1}(\omega)$. Of course $t_N((\omega)_{\N_N}) = \omega$ and $\CDN_{\geq N}((\omega_n)_{n\in\N_N}) = \CDN_{\geq N+1}(\omega)$.

It is also immediate that $t_N^{-1}(\module{J}_{N+1}) = \module{J}_N$. We thus have a unique surjective linear map $z_n : \module{R}^N \rightarrow \module{R}^{N+1}$ such that the following diagram commutes:
\begin{equation*}
  \xymatrix{
    \module{Q}_N \ar@{>>}[d]_{q_N} \ar@{>>}[r]^{t_N} & \module{Q}_{N+1} \ar@{>>}[d]^{q_{N+1}}\\
    \module{R}^N \ar[r]^{z_N} & \module{R}^{N+1}
    }
\end{equation*}
and $(y_N,z_N)$ is a Hilbert module isomorphism from $\module{R}^N$ to $\module{R}^{N+1}$. Following the same argument as before, we then show that $(y_N,z_N)$ is a full modular quantum isometry: if $\zeta\in \dom{\CDNa^N}$ and if $\eta = z_N(\zeta)$, and if $\varepsilon > 0$, then there exists $\omega \in \module{Q}_{N+1}$ such that $q_{N+1}(\omega)=\eta$ and $\CDN_{\geq N+1}(\omega) \leq \CDNa^N(\eta) + \varepsilon$; then there exists $\omega'\in\dom{\CDN_{\geq N}}$ such that $t_N(\omega')=\omega$ and $\CDN_{\geq N}(\omega') = \CDN_{\geq N+1}(\omega)$ (note that we used the compactness of the balls for D-norms). As our diagram commutes, $q_N(\omega) = \zeta$. Therefore $\CDNa^N(\zeta) \leq \CDN_{\geq N}(\omega') \leq \CDNa^W(\eta) + \varepsilon$. Thus $\CDNa^{N+1} \circ z_N \leq \CDNa^N$. On the other hand, if $\omega \in \module{R}^N$ and $\zeta\in\module{Q}_N$ with $q_N(\zeta) = \omega$, then $\CDN_{\geq N}(\zeta) \geq \CDN_{\geq N+1}(t_N(\zeta)) \geq \CDNa^{N+1}(q_{N+1}\circ t_N(\zeta))$ so $\CDNa^N \geq \CDNa^{N+1}\circ z_N$.

Let us write $(\module{R},\inner{\cdot}{\cdot}{\module{R}}, \CDNa, \alg{F}, \mathsf{Q})$ for any representative of this isometry class.

We want to use the {\QVB{F}{H}}
\begin{equation*}
  (\module{Q}_N,\inner{\cdot}{\cdot}{N},\CDN_{\geq N},\alg{E}_N,\mathsf{S}_N)
\end{equation*}
to construct a modular tunnel from
\begin{equation*}
  (\module{M}_N,\inner{\cdot}{\cdot}{\module{M}_N},\CDN_N,\A_N,\Lip_N)
\end{equation*}
to
\begin{equation*}
  (\module{R}^N,\inner{\cdot}{\cdot}{\module{R}^N}, \CDNa^N, \alg{F}^N, \mathsf{Q}^N)\text{.}
\end{equation*}

Let us define a modular quantum isometry from $(\module{Q}_N,\inner{\cdot}{\cdot}{N},\CDN_{\geq N},\alg{E}_N,\mathsf{S}_N)$ onto $(\module{M}_N,\inner{\cdot}{\cdot}{\module{M}_N},\CDN_N,\A_N,\Lip_N)\text{.}$

If $\omega = (\omega_n)_{n\in\N_N} \in \module{Q}_N$, then we set $\Pi_N(\omega) = \Theta_N(\omega_N) \in \module{M}_N$. We easily check that if $a=(a_n)\in\alg{S}_N$ then:
\begin{align*}
  \Pi_N( a \omega ) = \Theta_N(a_N \omega_N) = \theta_N(a_N) \Theta_N(\omega_N) = \pi_N(a) \Pi_N(\omega)
\end{align*} 
and
\begin{align*}
  \inner{\Pi_N(\omega)}{\Pi_N(\eta)}{\module{M}_N} = \inner{\Theta_N(\omega_N)}{\Theta_N(\eta_N)}{\module{M}_N} = \theta_N(\inner{\omega_N}{\eta_N}{\module{P}_N}) = \pi_N(\inner{\omega}{\eta}{\module{Q}_N}) \text{.}
\end{align*}
So $(\pi_N,\Pi_N)$ is a Hilbert modular morphism from $\module{Q}_N$ onto $\module{M}_N$. We also know that $\pi_N$ is a quantum isometry, so it is sufficient to prove the following in order to conclude $(\pi_N,\Pi_N)$ is a modular isometry.

Let $\eta \in \module{M}_N$ and $\CDN_N(\eta) < \infty$. Write $l = \CDN_N(\eta)$. As $\tau_N$ is a modular tunnel, there exists $\omega_N \in \module{P}_N$ with $\Theta_N(\omega_N) = \eta$ and $\CDN^N(\omega_N) = l$. Now, assume that for some $n\geq N$, there exists $\omega_n \in \module{P}_n$ with $\CDN(\omega_n) \leq l$. As $\tau_n$ is a modular tunnel, there exists $\omega_{n+1} \in \module{P}_{n+1}$ with $\CDN_{n+1}(\omega_{n+1}) \leq l$ and $\Theta_{n+1}(\omega_{n+1}) = \Sigma_n(\omega_n)$ (since $\CDN_n(\Sigma_n(\omega_n)) \leq l$). By induction, we conclude that there exists $(\omega_n)_{n\in\N_N}$ with $\Theta_N(\omega_N) = \eta_N$ and $\CDN_{\geq N}( (\omega_n)_{n\in\N_N} ) = l = \CDN_N(\eta)$. This proves that $(\pi_N,\Pi_N)$ is a modular isometry.

Therefore, writing $\mathds{E}_N = (\module{Q}_N,\inner{\cdot}{\cdot}{N}, \CDN_{\geq N}, \alg{E}_N, \mathsf{S}_N)$, then
\begin{equation*}
  \tau_{\geq N} = (\mathds{E}_N, (\pi_N,\Pi_N), (p_N, q_N))
\end{equation*}
is a modular tunnel from $\mathds{A}_N$ to $\left(\module{R}^N,\inner{\cdot}{\cdot}{\module{R}^N},\CDNa^N,\alg{F}^N,\mathsf{Q}^N\right)$. By definition, the extent of this tunnel is given by the extent of its underlying basic tunnel.

As stated above, by \cite[Corollary 6.25]{Latremoliere13b}, for all $\varepsilon > 0$, there exists $N\in\N$ such that if $n\geq N$, then the extent of $\tau_n$ is no more than $\varepsilon$.

Therefore:
\begin{equation*}
  \lim_{n\rightarrow\infty} \dmodpropinquity{}\left(\mathds{A}_n, \left(\module{R},\inner{\cdot}{\cdot}{}, \CDNa, \alg{F}, \mathsf{Q} \right) \right) = 0\text{.}
\end{equation*}

We now conclude our theorem. If $(\mathds{A}_n)_{n\in\N}$ is a Cauchy sequence of {\QVB{F}{H}s} for $\dmodpropinquity{F,H}$, then it admits a subsequence $(\mathds{A}_{f(n)})_{n\in\N}$ satisfying $\sum_{n\in\N} \dmodpropinquity{F,H}(\mathds{A}_{f(n)},\mathds{A}_{f(n+1)}) < \infty$. We then can apply our work here to show that $(\mathds{A}_{f(n)})_{n\in\N}$ has a limit for $\dmodpropinquity{F,H}$, and as a Cauchy sequence with a convergent subsequence, $(\mathds{A}_n)_{n\in\N}$ converges for $\dmodpropinquity{F,H}$, as desired.
\end{proof}

\begin{theorem}
  The metric $\dmetpropinquity{F,G,H}$ is complete on the class of all {\MVB{F}{G}{H}s} if $F$, $G$ and $H$ are continuous.
\end{theorem}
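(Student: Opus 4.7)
The plan is to extend the construction used in Theorem (\ref{completeness-thm}) by carrying the extra action data through the inverse-limit procedure. Given a Cauchy sequence $(\mathds{A}_n)_{n\in\N}$ of {\MVB{F}{G}{H}s}, pass to a fast subsequence so that $\sum_n \dmetpropinquity{F,G,H}(\mathds{A}_n,\mathds{A}_{n+1}) < \infty$, and for each $n$ choose a metrical tunnel $(\tau_n,\upsilon_n)$ from $\mathds{A}_n$ to $\mathds{A}_{n+1}$ whose extent is within $2^{-n}$ of the dual metrical propinquity. Write $\upsilon_n = (\D'_n,\Lip_{\D'}^n,\pi^n_\A,\pi^n_\B)$ and use the notation from Definition (\ref{metrical-tunnel-def}) for $\tau_n$. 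Since the extents of $\tau_n$ and $\upsilon_n$ are each dominated by the extent of $(\tau_n,\upsilon_n)$, both sequences $(\tau_n)$ and $(\upsilon_n)$ are themselves summable.

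First, apply Theorem (\ref{completeness-thm}) to the modular tunnels $(\tau_n)_{n\in\N}$ to obtain the limit {\QVB{F}{H}} $\mathds{R} = (\module{R},\inner{\cdot}{\cdot}{\module{R}},\CDNa,\alg{F},\mathsf{Q})$ together with the modular tunnels $\tau_{\geq N}$ whose extents tend to $0$. In parallel, apply the completeness theorem of \cite{Latremoliere13b} (\cite[Corollary 6.25]{Latremoliere13b}) to the $F$-tunnels $(\upsilon_n)$ to obtain a limit {\Qqcms{F}} $(\alg{F}',\mathsf{Q}')$ with associated tunnels $\upsilon_{\geq N} = (\alg{E}'_N,\mathsf{S}'_N,\pi'_N,p'_N)$ of extent going to $0$. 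At the level of sequences, $\alg{E}'_N$ is built exactly as $\alg{E}_N$ was in the proof of Theorem (\ref{completeness-thm}), using the $\D'_n$ and the intertwining maps $\pi^n_\B$, $\pi^{n+1}_\A$.

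The heart of the argument is to promote $\mathds{R}$ to a {\MVB{F}{G}{H}} with $(\alg{F}',\mathsf{Q}')$ as its alternate base, and to promote each $\tau_{\geq N}$ to a metrical tunnel. Define a left action of $\alg{E}'_N$ on $\module{Q}_N$ componentwise: for $d' = (d'_n)_{n\in\N_N} \in \alg{E}'_N$ and $\omega = (\omega_n)_{n\in\N_N} \in \module{Q}_N$, set $d'\cdot\omega = (d'_n \omega_n)_{n\in\N_N}$. The compatibility conditions $\Sigma_n(d'_n\omega_n) = \pi^{n+1}_\A(d'_{n+1})\Theta_{n+1}(\omega_{n+1})$ follow from the fact that each $(\tau_n,\upsilon_n)$ satisfies condition (5) of Definition (\ref{metrical-tunnel-def}), so the componentwise product remains in $\module{Q}_N$, and $\sup_{n\geq N} \CDN^n(d'_n\omega_n) \leq G(\norm{d'}{\alg{E}'_N},\mathsf{S}'_N(d'),\CDN_{\geq N}(\omega))$ by monotonicity of $G$ and the $G$-quasi-Leibniz property of each tunnel. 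This action passes to the quotient $\module{R}^N = \Mod{\module{Q}_N}{\module{J}_N}$ over $\alg{F}' = \Mod{\alg{E}'_N}{\alg{I}'_N}$ because the null ideals and null submodules are obviously compatible ($\alg{I}'_N \cdot \module{Q}_N \subseteq \module{J}_N$ and $\alg{E}'_N \cdot \module{J}_N \subseteq \module{J}_N$), yielding a left $\alg{F}'$-module action on $\module{R}$. The $G$-inequality for this action on $\module{R}^N$ is then obtained by taking the infimum over lifts, exactly as in Lemma (\ref{quotient-lemma}), and using continuity of $G$ to pass to the limit; this is the analogue for $G$ of the continuity argument used there for $H$.

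Once this action is in place, it is routine to verify that $(\tau_{\geq N},\upsilon_{\geq N})$ is a metrical tunnel in the sense of Definition (\ref{metrical-tunnel-def}): the modular part is Theorem (\ref{completeness-thm}), the tunnel $\upsilon_{\geq N}$ is the one from \cite{Latremoliere13b}, the compatibility with the quotient maps $q_N$ and $\Pi_N$ is inherited componentwise, and the $G$-inequality on the intermediate {\gMVB} $(\module{Q}_N,\inner{\cdot}{\cdot}{N},\CDN_{\geq N},\alg{E}_N,\mathsf{S}_N,\alg{E}'_N,\mathsf{S}'_N)$ was established above. Since $\tunnelextent{(\tau_{\geq N},\upsilon_{\geq N})} = \max\{\tunnelextent{\tau_{\geq N}},\tunnelextent{\upsilon_{\geq N}}\}$ and both extents tend to $0$, the subsequence $(\mathds{A}_n)$ converges to the limit {\gMVB} for $\dmetpropinquity{F,G,H}$. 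A standard Cauchy-sequence-with-convergent-subsequence argument completes the proof. The main obstacle is precisely the clean construction and verification of the $\alg{F}'$-action on $\module{R}$ satisfying the $G$-quasi-Leibniz inequality through the quotient; the rest follows by bundling together Theorem (\ref{completeness-thm}) and \cite[Corollary 6.25]{Latremoliere13b}.
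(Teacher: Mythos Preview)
Your proposal is correct and follows essentially the same approach as the paper: both arguments layer the construction of \cite{Latremoliere13b} for the alternate base $(\alg{F}',\mathsf{Q}')$ on top of Theorem~(\ref{completeness-thm}), define the componentwise action of the auxiliary tunnel algebra on $\module{Q}_N$, check it descends to the quotient, and use continuity of $G$ to pass the modular quasi-Leibniz inequality through the infimum. Your identification of the $\alg{F}'$-action on $\module{R}$ and its $G$-inequality as the only substantive new point matches the paper's emphasis exactly.
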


\begin{proof}
  Let us be given a sequence $\left(\mathds{M}_n\right)_{n\in\N} = (\module{M}_n,\inner{\cdot}{\cdot}{n}, \CDN_n, \A_n, \Lip_n,\B_n,\mathsf{M}_n)_{n\in\N}$ of {\MVB{F}{G}{H}s} such that:
\begin{equation*}
  \sum_{n\in\N} \dmetpropinquity{F,G,H}(\mathds{M}_n,\mathds{M}_{n+1})  < \infty \text{.}
\end{equation*}

For each $n\in\N$, let $(\tau_n,\upsilon_n)$ be a metrical $(F,G,H)$-tunnel from $\mathds{M}_n$ to $\mathds{M}_{n+1}$ of extent at most $\dmetpropinquity{F,G,H}(\mathds{A}_n,\mathds{A}_{n+1}) + \frac{1}{2^n}$ and write $\mathds{A}_n = (\mathds{M}_n)_\flat$.

We use the same notation as in the proof of Theorem (\ref{completeness-thm}), and in fact start from its conclusion. We write $\upsilon = (\alg{Y}_n,\mathsf{J}_n,\mu_n,\varpi_n)$ for all $n\in\N$. 

Fix $N\in\N$. We write $\mathsf{T}_N((d_n)_{n\in\N_N}) = \sup_{n\in\N_N} \mathsf{J}_n(d_n)$ for all $(d_n)_{n\in\N_N} \in \prod_{n\in\N_N}\alg{Y}_n$.

By \cite{Latremoliere13b} and by Definition (\ref{metrical-propinquity-def}), if we set:
\begin{equation*}
  \alg{W}_N = \mathrm{cl}\left(\left\{ (d_n)_{n\in\N_N} \in \prod_{n\in\N_N} \alg{Y}_n : 
    \begin{array}{l}
      \forall_{\N_N} n \quad \varpi_n(d_n) = \mu_{n+1}(d_{n+1}) \\
      \max\{\mathsf{T}_N((d_n)_{n\in\N}), \mathsf{T}_N((d_n)_{n\in\N})\} < \infty
    \end{array}
\right\}\right)
\end{equation*}
where the closure is taken in $\prod_{n\in\N_N}\alg{Y}_n$.  

By \cite{Latremoliere13b}, if $\alg{I}_N = \{ (d_n)_{n\in\N_N}\in\alg{W}_N : \lim_{n\rightarrow\infty}\norm{d_n}{\alg{Y}_n} = 0 \}$ and if $\mathsf{U}_N$ is the quotient norm of $\mathsf{T}_N$ on $\alg{U}_N = \bigslant{\alg{W}_N}{\alg{I}_N}$ then $\left(\alg{U}_N,\mathsf{U}_N\right)$ is a {\Qqcms{F}}. Let $r_N$ be the canonical surjection $\alg{W}_N\twoheadrightarrow\alg{U}_N$ and $k_N : (d_n)_{n\in\N_N} \in \alg{W}_N \mapsto \varphi_N(d_N)$.

Again by \cite{Latremoliere13b}, the quadruple $\upsilon_{\geq N} = (\alg{W}_N,\mathsf{T}_N, k_N, r_N)$ is a tunnel and
\begin{equation*}
  \lim_{N\rightarrow\infty} \tunnelextent{\upsilon_{\geq N}} = 0\text{.}
\end{equation*}
Moreover, for all $N, M \in\N$, as in the proof of Theorem (\ref{zero-thm}), the {\Qqcms{F}s} $(\alg{U}_N,\mathsf{U}_N)$ and $(\alg{U}_M,\mathsf{U}_M)$ are all fully quantum isometric in a canonical manner. Write $(\alg{U},\mathsf{U})$ for $(\alg{U}_0,\mathsf{U}_0)$.

We now prove that $\alg{U}$ acts on $\module{R}$ and satisfies the appropriate quasi-Leibniz identity. It is immediate to check that setting:
\begin{equation*}
  \forall d = (d_n)_{n\in\N} \in \alg{W}, \omega = (\omega_n)_{n\in\N} \in \module{Q}_N \quad d\omega = (d_n \omega_n)_{n\in\N}
\end{equation*}
turns $\module{Q}_0$ into a left $\alg{W}_0$-module. Following the same method as in the proof of Theorem (\ref{completeness-thm}), we then conclude that $\module{R}$ is indeed a left $\alg{U}$-module.

Now, let $a \in \alg{U}_0$. Let $d = (d_n)_{n\in\N} \in \alg{W}_0$ with $r(d) = a$. Let $\omega \in \module{R}_0$, and let $\eta = (\eta_n)_{n\in\N_N} \in \module{Q}_0$ with $q_0(\eta) = \omega$. Since $G$ is increasing in each of its variables:
  \begin{align*}
    \CDNa(a\omega) 
    &\leq \sup_{n\in\N}\CDN^n(d_n \eta_n) \\
    &\leq \sup_{n\in\N} G(\norm{d_n}{\alg{Y}_n}, \mathsf{J}_n(d_n), \CDN^n(\eta_n)) \\
    &\leq G(\norm{d}{\alg{W}_0}, \mathsf{T}_0(d), \CDN_{\geq 0}(\eta))\text{.}
  \end{align*}
As $G$ is continuous, we conclude that:
\begin{align*}
  \CDNa(a\omega) 
  &\leq  \inf \left\{ G(\norm{d}{\alg{W}_0}, \mathsf{T}_0(d), \CDN_{\geq 0}(\eta)) : d\in r^{-1}_0(a), \eta \in q_0^{-1}(\omega) \right\} \\
  &\leq G(\norm{a}{\alg{U}},\mathsf{U}(a), \CDNa(\omega)) \text{.}
\end{align*}

Therefore, $(\module{R}, \inner{\cdot}{\cdot}{\module{R}}, \CDNa, \alg{F}, \mathsf{Q}, \alg{U}, \mathsf{U})$ is a {\MVB{F}{G}{H}}. Now, by definition, $(\tau_{\geq N},\upsilon_{\geq N})$ is a metrical tunnel and
\begin{equation*}
  \lim_{n\rightarrow\infty} \tunnelextent{\tau_{\geq N},\upsilon_{\geq N}} = 0
\end{equation*}
as desired. This completes our proof.
\end{proof}

\bibliographystyle{amsplain}
\bibliography{../thesis}
\vfill

\end{document}